\newcommand{\ls}{\leqslant}
\newcommand{\gs}{\geqslant}
\renewcommand{\div}{\operatorname{div}}
\newcommand{\R}{\mathbb R}
\newcommand{\Rd}{\mathcal{R}}
\newcommand{\Rda}{\mathcal{R}^\ast}
\newcommand{\A}{\mathcal{A}}
\newcommand{\per}{\operatorname{Per}}
\newcommand{\eps}{\varepsilon}
\newcommand{\BV}{\mathrm{BV}} 
\newcommand{\dd}{\, \mathrm{d}}
\newcommand{\Id}{\mathrm{Id}}
\newcommand\restr[2]{{
\left.\kern-\nulldelimiterspace #1 \vphantom{\big|} \right|_{#2} 
}}
\newcommand{\TV}[1]{\mathrm{TV}{(#1)}}
\newcommand{\TVper}[1]{\mathrm{TV}_{\mathrm{per}}{(#1)}}
\newcommand{\TVO}[1]{\mathrm{TV}{(#1\, ; \, \Omega)}}
\newcommand{\Br}{B(x,r)}
\newcommand{\abs}[1]{\lvert#1\rvert}
\newcommand{\norm}[1]{\left\|#1\right\|}
\newcommand{\set}[1]{\left\{#1\right\}}
\newcommand{\supp}{\mathop{\mathrm{supp}}}
\newcommand{\diam}{\mathop{\mathrm{diam}}}
\newcommand{\scal}[2]{ \left \langle #1, \ #2 \right \rangle}
\newtheorem{theorem}{Theorem}
\newtheorem{prop}{Proposition}
\newtheorem{lemma}{Lemma}
\theoremstyle{definition}
\newtheorem{definition}{Definition}
\newtheorem{example}{Example}
\theoremstyle{remark}
\newtheorem{remark}{Remark}
\numberwithin{equation}{section}
\begin{document}
\title{Influence of dimension on the convergence of level-sets in total variation regularization \footnotetext{2010 Mathematics Subject Classification: 49Q20, 65J20, 65J22, 53A10, 46B20.}}
\author{Jos\'e A. Iglesias\thanks{Johann Radon Institute for Computational and Applied Mathematics (RICAM), Austrian Academy of Sciences, Linz, Austria (\texttt{jose.iglesias{@}ricam.oeaw.ac.at})} , Gwenael Mercier\thanks{Computational Science Center, University of Vienna, Austria (\texttt{gwenael.mercier{@}univie.ac.at})}}
\date{}
\maketitle
\vspace{-0.2cm}
\begin{abstract}
We extend some recent results on the Hausdorff convergence of level-sets for total variation regularized linear inverse problems. Dimensions higher than two and measurements in Banach spaces are considered. We investigate the relation between the dimension and the assumed integrability of the solution that makes such an extension possible. We also give some counterexamples of practical application scenarios where the natural choice of fidelity term makes such a convergence fail.
\end{abstract}

\section{Introduction}
In a few recent papers, several results have been shown linking the source condition for convex regularization introduced in \cite{BurOsh04} to the convergence in Hausdorff distance of level-sets of total variation regularized solutions of inverse problems, as the amount of noise and the regularisation parameter vanish simultaneously. Such a mode of convergence, although seldom used, is of particular interest in the context of recovery of piecewise constant coefficients as well as in the processing of images composed mainly of objects separated by clear boundaries. In these situations, Hausdorff convergence of level-sets can be seen as uniform convergence of the geometrical objects appearing in the data.

To be more specific, in \cite{ChaDuvPeyPoo17} such a convergence is obtained for the denoising problem in the entire plane with $L^2$ fidelity term, and in \cite{IglMerSch18} the authors extend the result to bounded domains and to general linear inverse problems. These results have two common features. First, they are written in a Hilbert space framework, allowing to easily study the convergence of dual solutions. Second, the analysis is performed in the plane where this Hilbert framework corresponds to the optimal scaling where weak regularity for level-sets as well as good behavior at infinity can be proved, both of them being related to equi-integrability of these dual solutions locally or at infinity.

In \cite{BurKorRas19}, similar results are obtained in the setting of imperfect forward models, with measurements in $L^\infty$ and where an $L^1$ norm term is added to the regularization. There, it is assumed that the operators are bounded from $L^1$ in a bounded domain ($q=1$ in the notation below), a case that we do not treat since then boundedness in $(L^1)^\ast = L^\infty$ directly implies equi-integrability in $L^p$ for any $p$.

Our aim is to extend this type of result to different choice of integrability and measurements made in more general Banach spaces. We will see that this extension requires some particular choices of these ingredients, and present some positive results as well as counterexamples. 

More precisely, we study convergence, as the positive regularization parameter $\alpha$  and the noise $w$ simultaneously vanish, of level-sets of minimizers of
\begin{equation}\label{eq:primalalphaw}\inf_{u \in L^q(\Omega)} \frac{1}{\sigma}\|Au-f-w\|^\sigma_{Y}+\alpha\TV{u},\tag{$P_{\alpha,w}$}\end{equation}
with $q, \sigma>1$ and $\Omega \subseteq \R^d$, $d>1$. We assume $q \ls d/(d-1)$, which implies that the conjugate exponent $q':= q/(q-1)\gs d$. Here $A:L^q(\Omega) \to Y$ is linear bounded, where $Y$ is a locally uniformly convex Banach space, with dual $Y^\ast$ which is also assumed to be uniformly convex and with modulus of uniform convexity of power type $\tau \ls \sigma'$, where $\sigma'= \sigma/(\sigma - 1)$ (see Definition \ref{def:unifconv} and Proposition \ref{prop:unifconv}). The power $\sigma>1$ allows for natural choices of data term depending on the space $Y$, beyond the case of Hilbert space where $\sigma=2$.

\subsection{Preliminaries}
\paragraph{A few results on geometry of Banach spaces.}
We begin by making precise our requirements for the measurement space $Y$.
\begin{definition}\label{def:unifconv}
Let $\phi: Y \to \R$ be a convex function. We say that $\phi$ is \emph{locally uniformly convex} if for any $f \in Y$, there exists a nondecreasing real function $h_\phi^f >0$ such that for every $g \in Y$ with $g \neq f$ and $0 \ls t \ls 1$,
\begin{equation} \label{eq:locunifconv} \phi\left((1-t)f+tg)\right) \ls (1-t)\phi(f) + t\phi(g) - t(1-t) h^f_\phi\left(\|f-g\|_{Y}\right).\end{equation}

The function $\phi$ is called (globally) \emph{uniformly convex} \cite[Chapter 5.3]{BorVan10} if there exists a nondecreasing $h_\phi >0$ such that for all $f \neq g \in Y$ and $0 \ls t \ls 1$ we have
\begin{equation}\label{eq:globunifconv}\phi\left((1-t)f+tg)\right) \ls (1-t)\phi(f) + t\phi(g) - t(1-t)h_\phi\left(\|f-g\|_{Y}\right).\end{equation}
Furthermore, if two functions $h_\phi, \tilde h_\phi$ satisfy \eqref{eq:globunifconv}, then the function $s \mapsto \max(h_\phi(s), \tilde h_\phi(s))$ does too, so there is a largest such function that we denote by $\delta_\phi$ and call the modulus of uniform convexity of $\phi$. If $\delta_\phi(\eps) \gs C \eps^p$ for some $C>0$, $p>1$ and all $\eps \gs 0$, we say that this modulus of uniform convexity is of power type $p$.

Moreover, the function $\phi$ is said to be \emph{strictly convex} when for all $f, g \in Y$ with $f \neq g$ and $0 < t < 1$ we have
\begin{equation}\phi\left((1-t)f+tg)\right) < (1-t)\phi(f) + t\phi(g).\label{eq:stricconv}\end{equation}

Clearly, uniform convexity is stronger than local uniform convexity, which in turn implies strict convexity.
\end{definition}

The main quantitative result about uniformly convex functions that we will use is the following uniform monotonicity inequality for subgradients:
\begin{lemma}\label{lem:unifmonoton}
Let $\phi: Y \to \R$ be a convex function with modulus of uniform convexity $\delta_\phi$, and denote by $\partial \phi(f) \subset Y^\ast$ the subgradient of $\phi$ at $f$. Then, if $v_f \in \partial \phi(f)$ and $v_g \in \partial \phi(g)$ we have the uniform monotonicity inequality
\begin{equation}\label{eq:Vident}\scal{v_f - v_g}{f-g}_{(Y^\ast, Y)}\gs 2 \delta_\phi\left(\|f-g\|_{Y}\right).\end{equation}
\end{lemma}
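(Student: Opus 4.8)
The plan is to pit the first-order information carried by the subgradients against the second-order information carried by the modulus $\delta_\phi$, testing both at the interpolating points $z_t := (1-t)f + tg$ for $t \in (0,1)$. Recall that $v_f \in \partial\phi(f)$ means $\phi(y) \gs \phi(f) + \scal{v_f}{y-f}$ for all $y \in Y$, and similarly for $v_g$. Evaluating these subgradient inequalities at $y = z_t$ produces \emph{lower} bounds for $\phi(z_t)$, while the uniform convexity inequality \eqref{eq:globunifconv} produces an \emph{upper} bound for the same quantity; matching the two is what will extract $\delta_\phi$.

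Concretely, first I would write $z_t - f = t(g-f)$ and use $v_f \in \partial\phi(f)$ to get $\phi(z_t) - \phi(f) \gs t\scal{v_f}{g-f}$. Combining this with the upper bound $\phi(z_t) - \phi(f) \ls t\bigl(\phi(g)-\phi(f)\bigr) - t(1-t)\delta_\phi(\|f-g\|_Y)$ coming from \eqref{eq:globunifconv} and dividing by $t > 0$ yields
\begin{equation*}
\scal{v_f}{g-f} \ls \phi(g) - \phi(f) - (1-t)\,\delta_\phi(\|f-g\|_Y).
\end{equation*}
Repeating the argument symmetrically at $g$, now using $z_t - g = (1-t)(f-g)$ and dividing by $(1-t)>0$, gives
\begin{equation*}
\scal{v_g}{f-g} \ls \phi(f) - \phi(g) - t\,\delta_\phi(\|f-g\|_Y).
\end{equation*}

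The final step is to notice that the first estimate holds for every $t \in (0,1)$ independently of the second, so I may send $t \to 0$ in the first inequality and $t \to 1$ in the second, in each case promoting the coefficient of $\delta_\phi$ to $1$. Adding the two resulting inequalities, the terms $\phi(g)-\phi(f)$ and $\phi(f)-\phi(g)$ cancel, the left-hand side collapses to $-\scal{v_f-v_g}{f-g}$, and one is left with $-\scal{v_f-v_g}{f-g} \ls -2\,\delta_\phi(\|f-g\|_Y)$, which is exactly \eqref{eq:Vident}.

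The main obstacle is obtaining the sharp constant $2$ rather than $1$. A naive symmetric approach---testing both subgradient inequalities at the midpoint $z_{1/2}$ and adding---only recovers the factor $\delta_\phi$, since each inequality then carries a coefficient $1/2$ in front of $\delta_\phi$. The gain to $2\delta_\phi$ comes precisely from the \emph{asymmetric} limits $t\to 0$ and $t\to 1$, which are legitimate because the two intermediate estimates are each valid for all $t$ separately; this is the one point deserving care. No power-type assumption on $\delta_\phi$ is needed: the argument uses only that $\delta_\phi$ is the (largest) modulus satisfying \eqref{eq:globunifconv}, so \eqref{eq:Vident} holds with whatever modulus $\phi$ admits.
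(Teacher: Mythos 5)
Your proof is correct and is essentially the same as the paper's: both derive the strengthened subgradient inequalities $\phi(g) \gs \phi(f) + \scal{v_f}{g-f} + \delta_\phi(\|f-g\|_Y)$ and its counterpart at $g$ by combining the subgradient inequality with \eqref{eq:globunifconv} along the segment and passing to the limit, then add them to obtain the factor $2$. Your asymmetric limits $t\to 0$ and $t\to 1$ in a single parametrization are exactly the paper's ``take $t\to 0$, then argue similarly with $f$ and $g$ swapped,'' so there is no substantive difference.
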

\begin{proof}
Since $v_f \in \partial \phi(f)$ we can write for each $0 < t < 1$
\begin{equation}\begin{aligned}
\phi(f)+\scal{v_f}{t(g-f)}_{(Y^\ast, Y)} &\ls \phi\big(f + t(g-f)\big)=\phi\big((1-t)f + tg\big)\\
&\ls (1-t)\phi(f) + t\phi(g) - t(1-t)\delta_\phi(\|f-g\|_{Y}),
\end{aligned} \label{eq:unifsubg1}\end{equation}
or
\[t \scal{v_f}{g-f}_{(Y^\ast, Y)} \ls t\phi(g) - t\phi(f) - t(1-t)\delta_\phi(\|f-g\|_{Y}),\]
in which we can divide by $t$ and take the limit as $t \to 0$ to obtain
\begin{equation}\label{eq:unifsubg2}\phi(g) \gs \phi(f) + \scal{v_f}{g-f}_{(Y^\ast, Y)} + \delta_\phi(\|f-g\|_{Y}).\end{equation}
Similarly, for $v_g$ we get
\begin{equation}\label{eq:unifsubg3}\phi(f) \gs \phi(g) + \scal{v_g}{f-g}_{(Y^\ast, Y)} + \delta_\phi(\|f-g\|_{Y}),\end{equation}
and using \eqref{eq:unifsubg2} in \eqref{eq:unifsubg3} we get \eqref{eq:Vident}.
\end{proof}

The uniform convexity notions of Definition \ref{def:unifconv} give rise to analogous ones for Banach spaces through their norms \cite[Def.\ 5.3.2, Thm.\  5.2.5]{Meg98}:
\begin{definition}
A Banach space $Y$ is said locally uniformly convex (resp. (globally) uniformly convex, strictly convex) if  \eqref{eq:locunifconv} (resp. \eqref{eq:globunifconv}, \eqref{eq:stricconv}) hold for $f,g$ belonging to the unit sphere and $\phi$ is the norm of $Y$. The modulus of uniform convexity of $Y$ is the corresponding $\delta_{\| \cdot \|}$ for such points.
\end{definition}

The uniform convexity of $Y$ and $Y^\ast$ that we assume is arguably not a strong restriction, since it is satisfied by many natural spaces arising in the study of inverse problems for physical models (see \cite[Prop.\ 11.12]{Bre11} for quotients,  \cite[Thm.\ 3.9 and Thm.\ 3.12]{AdaFou03} for duals of Sobolev spaces, \cite[Example 2.47]{SchKalHofKaz12} for the power types and \cite{Han56} for the precise moduli of $L^p$).
\begin{prop}
Let $1 < p < \infty$, $p'=p/(p-1)$ and $\Omega \subseteq \R^d$ an open set.
\begin{itemize}
\item The space of sequences $\ell^p$ is uniformly convex, and in consequence so is the dual space $\ell^{p'}$.
\item The space $L^p(\Omega)$ is also uniformly convex , as is the dual $L^{p'}(\Omega)$.
\item Sobolev spaces $W^{k,p}(\Omega)$. Since they can be isometrically embedded in $L^p(\Omega;\R^N)$ for some $N$, they are uniformly convex. The representation theorem for $(W^{k,p}(\Omega))^\ast$ as a subspace of $L^{p'}(\Omega;\R^N)$ implies that it is also uniformly convex. Similarly, $W^{k,p}_0(\Omega)$ and its dual $W^{-k,p'}(\Omega)$ are also uniformly convex.
\item The modulus of uniform convexity of the canonical norms of these spaces are of power type $\max(p,2)$ or $\max(p',2)$, respectively.
\item Quotients of uniformly convex spaces by closed subspaces are again uniformly convex.
\end{itemize}
\end{prop}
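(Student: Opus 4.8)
The plan is to reduce every assertion to two ingredients: (i) the uniform convexity, with modulus of power type $\max(p,2)$, of the two model spaces $\ell^p$ and $L^p(\Omega)$ for $1<p<\infty$; and (ii) the stability of uniform convexity, \emph{together with its power type}, under passing to a closed subspace or to a quotient by a closed subspace. Granting (i) and (ii), each listed space is treated by recognizing it either as a model space with an exponent in $(1,\infty)$, or as a closed subspace or quotient of such a space. In particular, since $(\ell^p)^\ast=\ell^{p'}$ and $(L^p(\Omega))^\ast=L^{p'}(\Omega)$ are again model spaces with conjugate exponent $p'\in(1,\infty)$, the statements about these duals follow directly with power type $\max(p',2)$.

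For ingredient (i) I would verify the defining inequality \eqref{eq:globunifconv} for $\phi=\norm{\cdot}$ on the unit sphere. It suffices to control the midpoint modulus $\delta(\eps)=\inf\set{1-\norm{(f+g)/2}\ :\ \norm{f}=\norm{g}=1,\ \norm{f-g}\gs\eps}$, the general-$t$ form following by the standard comparison of moduli of convexity. For $2\ls p<\infty$ Clarkson's first inequality $\norm{(f+g)/2}^p+\norm{(f-g)/2}^p\ls\tfrac12\norm{f}^p+\tfrac12\norm{g}^p$ gives at once $\delta(\eps)\gs 1-(1-(\eps/2)^p)^{1/p}\gs C\eps^p$, so the power type is $p=\max(p,2)$. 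For $1<p<2$ the sharp power type $2=\max(p,2)$ is more delicate, since Clarkson's second inequality only yields the weaker exponent $p'$; here I would invoke Hanner's inequalities and the explicit moduli of \cite{Han56}, as recorded in \cite[Example 2.47]{SchKalHofKaz12}. The same computations apply verbatim to $\ell^p$.

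Ingredient (ii) splits into a soft subspace step and a slightly more involved quotient step. If $Z\subseteq X$ is a closed subspace carrying the restricted norm, then any $f,g$ on the unit sphere of $Z$ also lie on the unit sphere of $X$, so \eqref{eq:globunifconv} for $X$ restricts verbatim to $Z$ and gives $\delta_Z\gs\delta_X$, preserving the power type. Applying this to the isometric embedding $u\mapsto(D^\alpha u)_{\abs{\alpha}\ls k}$ of $W^{k,p}(\Omega)$ into $L^p(\Omega;\R^N)$ (whose range is closed, being the image of a Banach-space isometry) shows that $W^{k,p}(\Omega)$, and likewise $W^{k,p}_0(\Omega)$, is uniformly convex of power type $\max(p,2)$. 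For the duals I would use the representation theorems \cite[Thm.\ 3.9 and Thm.\ 3.12]{AdaFou03}, which identify $(W^{k,p}(\Omega))^\ast$ and $W^{-k,p'}(\Omega)$ isometrically with subspaces (respectively quotients) of $L^{p'}(\Omega;\R^N)$, so that either the restriction argument or the quotient step below yields uniform convexity of power type $\max(p',2)$.

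The quotient step is where the only real care is needed, and it also settles the final bullet. Let $X$ be uniformly convex with modulus $\delta_X$ and let $M\subseteq X$ be closed, with $\norm{\bar x}_{X/M}=\inf_{m\in M}\norm{x-m}_X$. Since uniform convexity forces reflexivity (Milman--Pettis), closed subspaces are proximinal and the quotient norm is attained: for $\bar f,\bar g$ on the unit sphere of $X/M$ I may choose representatives $f,g\in X$ with $\norm{f}_X=\norm{g}_X=1$. Because $\overline{f-g}=\bar f-\bar g$ gives $\norm{f-g}_X\gs\norm{\bar f-\bar g}_{X/M}\gs\eps$, uniform convexity of $X$ yields $\norm{(f+g)/2}_X\ls 1-\delta_X(\eps)$, and then $\norm{(\bar f+\bar g)/2}_{X/M}\ls\norm{(f+g)/2}_X$ forces $\delta_{X/M}\gs\delta_X$, again preserving the power type. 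The subtle points are precisely the attainment of the quotient norm and the lower bound $\norm{f-g}_X\gs\eps$, both of which hinge on reflexivity; I would cross-check the bare uniform convexity statement against \cite[Prop.\ 11.12]{Bre11}. Combined with the previous paragraphs, this closes all cases.
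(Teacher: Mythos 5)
Your proposal is correct and takes essentially the same route as the paper, which in fact offers no proof at all: the proposition is stated as a collection of classical facts, with the intended reduction (model spaces $\ell^p$, $L^p(\Omega)$; isometric embedding of $W^{k,p}(\Omega)$ into $L^p(\Omega;\R^N)$; duality, subspaces and quotients) indicated in the statement itself and in the surrounding citations to \cite{Han56}, \cite[Example 2.47]{SchKalHofKaz12}, \cite[Thm.\ 3.9 and Thm.\ 3.12]{AdaFou03} and \cite[Prop.\ 11.12]{Bre11}. Your argument correctly supplies the details behind those citations --- the Clarkson computation for $p \gs 2$, the honest deferral to Hanner's inequalities for the sharp power type when $1<p<2$, and the proximinality (Milman--Pettis) argument showing quotients inherit the modulus --- including the point the paper glosses over, namely that the dual of a closed subspace is naturally a quotient rather than a subspace, which your two-pronged subspace/quotient step handles either way.
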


\begin{example}\label{ex:cpolar}While not apparent in the previous list, the uniform convexity of $Y$ and of $Y^\ast$ are independent of each other. As a simple example, consider $\R^2$ with the norm defined for $(x,y) \in \R^2$ by
\[\|(x,y)\|_C = \sup\left\{ \lambda > 0 \,\middle\vert\, (\lambda x, \lambda y) \in C\right\}\]
induced (all norms in $\R^d$ are of this form, see \cite[Thm.\ 15.2]{Roc70}) by the closed convex symmetric set
\[C:=\left\{ (x,y) \,\middle\vert\, \psi(x)+\psi(y) \ls 1\right\},\]
where $\psi$ is the Huber function of parameter $1/2$ defined by
\[
\begin{aligned}
\psi: \R &\to \R^+ \cup \{0\} \\
t &\mapsto 
\begin{cases} 
\frac{1}{2}|t|^2 & \text{if } |t| \ls \frac{1}{2} \\
\frac{1}{2}\left(|t| - \frac{1}{4}\right) & \text{if } |t| > \frac{1}{2}.
\end{cases}
\end{aligned}
\]
Now, the corresponding dual norm is induced \cite[Thm.\ 15.1]{Roc70} by the polar set $C^\circ$ of $C$ defined by
\[C^\circ=\left\{ (\bar x,\bar y) \,\middle\vert\, \bar x x + \bar y y \ls 1 \text{ for all }y\in C\right\},\]
so we can denote it by $\|\cdot\|_{C^\circ}$. In view of the definition of $C^\circ$, it is easy to convince oneself that $\|\cdot\|_{C^\circ}$ is uniformly convex; roughly, the influence of the rounded corners of $C$ will prevent the facets of $C^\circ$ from being completely flat, see Figure \ref{fig:cpolar}. However, $\|\cdot\|_C$ is clearly not uniformly convex. In fact, for norms in a Banach space the dual property to uniform convexity is uniform smoothness (in the sense that the limit defining the Fr\'{e}chet derivative exists uniformly in the point and direction taken) \cite[Prop.\ 5.1.18 and Cor.\  5.1.21]{BorVan10} and since $\psi_\eps \in \mathcal C^1$, $\|\cdot\|_C$ is uniformly smooth, which implies uniform convexity of $\|\cdot\|_{C^\circ}$.
\end{example}

\begin{figure}
\centering
 \includegraphics[width = 0.6\textwidth]{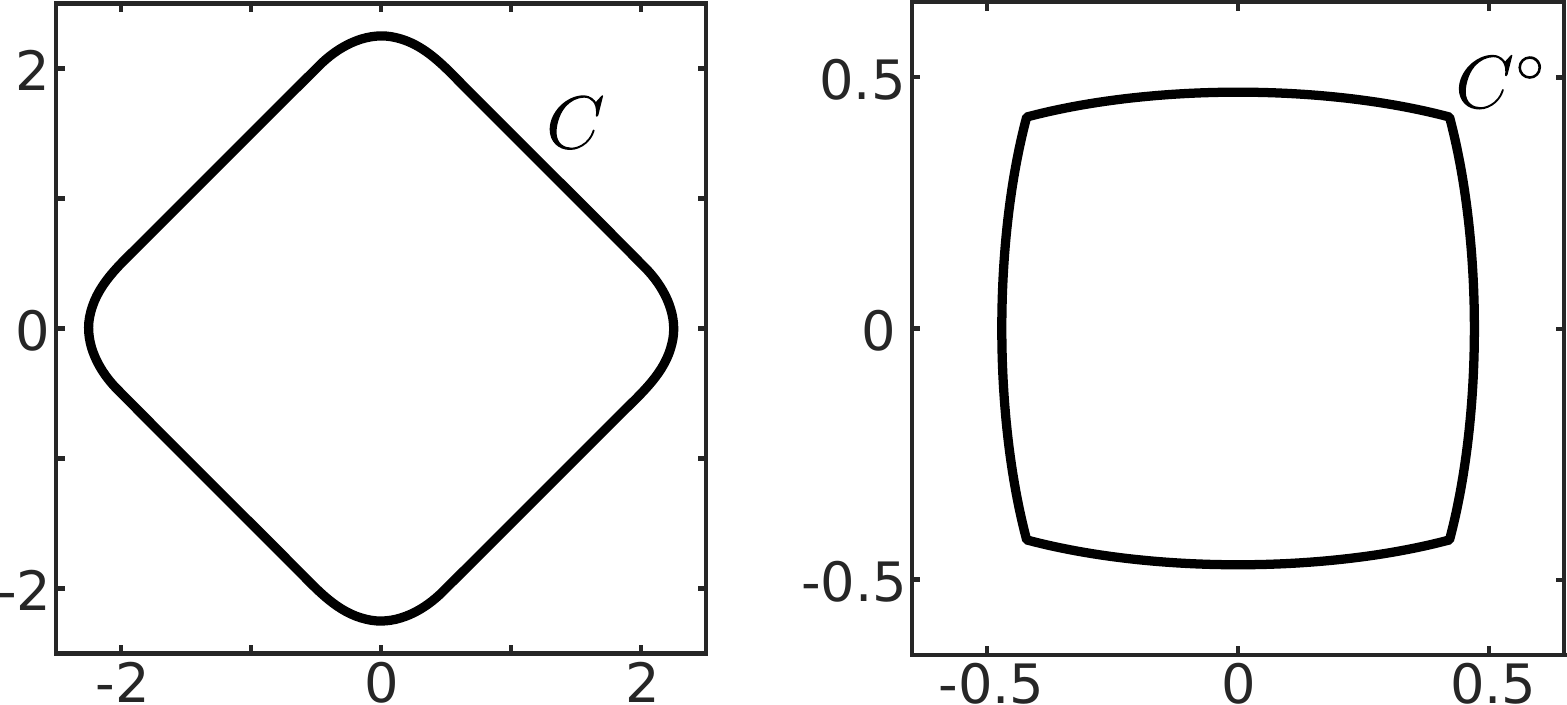}
 \caption{The unit ball $C$ of Example \ref{ex:cpolar} and its polar $C^\circ$, the unit ball of the dual space. The duality between uniform convexity and uniform smoothness also brings some intuition on uniform convexity of $Y^\ast$ being required for differentiability of $\|\cdot\|_Y^2$.}
 \label{fig:cpolar}
\end{figure}

Since we consider Fenchel duality for the minimization problem \eqref{eq:primalalphaw}, we will need the \emph{duality mapping} of $Y$, that is defined as
\begin{equation}\label{eq:dualitymap}
\begin{aligned}
j: Y &\to Y^\ast \\
g &\mapsto \partial \left(\frac{1}{2}\|\cdot\|^2_Y\right)(g)
\end{aligned}
\end{equation}
where, as before, $\partial$ denotes the subgradient. Note that $j$ is one-homogeneous. We make use of the following topological properties of $Y$ and its dual (for the proofs, see \cite[Ex\ 5.3.11, Thm.\ 5.4.6]{BorVan10}, \cite[Cor.\ 2.43]{SchKalHofKaz12}, \cite[Thm.\ 3.31]{Bre11} \cite[Prop.\ 32.22]{Zei90} and \cite[Thm.\ 5.3.7]{Meg98}).

\begin{prop}\label{prop:unifconv}
Let $Y$ be a Banach space. Then
\begin{itemize}
\item If $Y$ is uniformly convex, the function $\|\cdot \|_Y^{p}$ is uniformly convex on bounded sets for any $p > 1$. If additionally the modulus of uniform convexity of the norm of $Y$ is of power type $\tau$, then $\|\cdot \|_Y^p$ is globally uniformly convex for all $p\gs \tau$ .
\item Every uniformly convex Banach space is also reflexive, by the Milman-Pettis theorem.
\item If $Y^\ast$ is strictly convex, the duality mapping $j$ is single valued and the map $\frac{1}{2}\|\cdot\|^2_Y$ is G\^{a}teaux differentiable on $Y \setminus \{0\}$ with derivative $j$. If $Y^\ast$ is locally uniformly convex, then it is in fact Fr\'{e}chet differentiable. Moreover, if $Y$ is also locally uniformly convex, $j$ is invertible with inverse the duality mapping of $Y^\ast$ .
\item If $Y$ is locally uniformly convex, it has the Radon-Riesz property, that is if $y_n \rightharpoonup y$ is a weakly convergence sequence in $Y$ and if $\Vert y_n \Vert_Y \to \Vert y \Vert_Y$, then the convergence is strong.
\end{itemize}
\end{prop}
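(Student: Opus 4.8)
The statement collects four essentially classical facts, so the plan is to prove each bullet by the appropriate standard mechanism rather than to seek a unified argument, ordering them so that reflexivity (needed later) precedes the duality-map statement. The genuinely quantitative part is the first bullet, which I expect to be the main obstacle.

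For the first item I would transfer the sphere modulus $\delta_{\|\cdot\|}$ of the norm to the power $\Phi=\|\cdot\|_Y^p$ by composing with the scalar function $\theta(t)=t^p$. The sphere inequality $\norm{\tfrac{f+g}{2}}\ls 1-\delta_{\|\cdot\|}(\norm{f-g})$ for unit vectors extends, by a standard reduction, to a midpoint bound for $f,g$ of possibly different lengths in a bounded set; applying the increasing function $\theta$ to this bound and then invoking the uniform convexity of $\theta$ on bounded intervals --- quadratic modulus when $1<p<2$, and power-$p$ (Clarkson/Hanner) modulus when $p\gs 2$ --- produces a modulus for $\Phi$ and gives uniform convexity on bounded sets. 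For the global power-type claim I would instead invoke a Xu--Roach-type characteristic inequality: once $\delta_{\|\cdot\|}(\eps)\gs C\eps^\tau$, the decay at infinity of the scalar modulus is compensated as soon as $p\gs\tau$, which yields global uniform convexity of $\|\cdot\|_Y^p$. Tracking the modulus simultaneously through the normalization and the composition is the delicate step.

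The second and fourth items are soft. For reflexivity I would run the Milman--Pettis argument: given $\xi\in Y^{\ast\ast}$ of unit norm, Goldstine's theorem yields a net in the unit ball of $Y$ whose canonical images converge weak-$\ast$ to $\xi$; choosing a functional nearly norming $\xi$ shows these vectors have norm tending to $1$ and midpoints of norm tending to $1$, so uniform convexity forces the net to be norm-Cauchy, hence convergent to some $y\in Y$ with $Jy=\xi$, and $J$ is onto. For the Radon--Riesz property, assume $y_n\rightharpoonup y$ with $\norm{y_n}\to\norm{y}$; the case $y=0$ is immediate, and otherwise I set $u_n=y_n/\norm{y_n}\rightharpoonup u=y/\norm{y}$ on the sphere, observe that $\norm{\tfrac{u_n+u}{2}}\to 1$ by combining weak lower semicontinuity of the norm with the triangle inequality, and deduce $\norm{u_n-u}\to 0$ from local uniform convexity at $u$, whence $y_n\to y$ strongly.

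For the third item I would argue by duality. The set $\partial\big(\tfrac12\|\cdot\|_Y^2\big)(g)$ is exactly the collection of $v\in Y^\ast$ with $\norm{v}_{Y^\ast}=\norm{g}_Y$ and $\scal{v}{g}=\norm{g}_Y^2$, i.e.\ the scaled norming functionals of $g$; strict convexity of $Y^\ast$ makes the norming functional unique, so $j$ is single valued, and a continuous convex function with single-valued subgradient is G\^{a}teaux differentiable with that subgradient as derivative. Upgrading to Fr\'{e}chet differentiability under local uniform convexity of $Y^\ast$ is where I would apply \v{S}mulian's theorem, which characterizes Fr\'{e}chet differentiability of $\norm{\cdot}_Y$ through rotundity properties of the dual; this dual characterization is the technical heart here. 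Finally, invertibility of $j$ uses the reflexivity from the second item: then $Y^{\ast\ast}=Y$, the duality mapping of $Y^\ast$ sends $Y^\ast$ back into $Y$, and verifying the two defining relations (matching norms and the duality pairing) shows it to be the inverse of $j$.
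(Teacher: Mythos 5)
The paper offers no proof of this proposition at all: it is a collection of classical facts, and the paper's ``proof'' consists of pointers to the literature (Borwein--Vanderwerff, Megginson, Brezis, Zeidler, Schuster et al.). Your proposal reconstructs essentially the standard arguments those references contain: reduction of uniform convexity of $\|\cdot\|_Y^p$ to the sphere modulus plus scalar convexity of $t \mapsto t^p$ (with a Xu--Roach-type inequality for the power-type refinement), the Goldstine-net proof of Milman--Pettis, the identification of $\partial\big(\tfrac12\|\cdot\|_Y^2\big)(g)$ with the scaled norming functionals plus \v{S}mulian's criterion for the Fr\'{e}chet upgrade, and the normalize-then-use-LUR-at-the-limit argument for Radon--Riesz. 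Bullets two and four, and the first two claims of bullet three, are correct and complete in outline; bullet one invokes the right mechanism, although the modulus bookkeeping you yourself flag as delicate is exactly where all the work lies and is left unexecuted.

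There is, however, one genuine gap, in the invertibility claim of the third bullet. You write that invertibility of $j$ ``uses the reflexivity from the second item'', but the second item yields reflexivity only for \emph{uniformly} convex spaces, whereas in the third bullet $Y$ is only assumed \emph{locally} uniformly convex (which is also all the paper assumes of $Y$). Local uniform convexity does not imply reflexivity: a separable nonreflexive space with separable dual, such as $c_0$, admits (by Kadec renorming plus Asplund averaging) an equivalent norm that is LUR and whose dual norm is LUR. And reflexivity cannot be dispensed with here: if $Y$ is not reflexive, James' theorem gives a functional $v \in Y^\ast$ that does not attain its norm, so there is no $y \in Y$ with $\|y\|_Y = \|v\|_{Y^\ast}$ and $\scal{v}{y} = \|v\|_{Y^\ast}^2$; hence $v$ is not in the range of $j$, and $j$ cannot be invertible onto $Y^\ast$. (Reflexivity is also what makes the duality mapping of $Y^\ast$ single valued and $Y$-valued, via strict convexity of $Y^{\ast\ast} = Y$.) The correct source of reflexivity under the paper's standing assumptions is not the convexity of $Y$ but the \emph{uniform} convexity of $Y^\ast$: this makes $Y^\ast$ reflexive by Milman--Pettis, and a Banach space is reflexive if and only if its dual is. With that one-line repair --- or by strengthening the hypothesis on $Y$ to uniform convexity --- your verification of the two inverse relations goes through as described.
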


\paragraph{Perimeters and curvatures in a nonsmooth framework.}
In the rest of the article, we deal with convergence in the Hausdorff distance of the level-sets of minimizers of \eqref{eq:primalalphaw}. Let us define this mode of convergence:
\begin{definition}
 \label{def:hausdist}
 Let $E$ and $F$ two subsets of $\Omega$. The Hausdorff distance between $E$ and $F$ is defined as
 \begin{equation*}\begin{aligned}\label{eq:hausdist}
d_H(E, F)&=\max\set{\sup_{x \in E} d(x, F),\, \sup_{y \in F} d(y,E)} \\
&= \max\set{\sup_{x \in E}\, \inf_{y \in F} |x - y|,\, \sup_{y \in F}\, \inf_{x \in E} |x - y|}.\end{aligned}\end{equation*}
If $E_n$ is a sequence of subsets of $\Omega$, we say that $E_n$ Hausdorff converges to $F$ whenever $d_H(E_n, F) \to 0.$
\end{definition}
The minimizers of \eqref{eq:primalalphaw} belong to the space of functions of bounded variation, which has a strong relation with properties of their level-sets:
\begin{definition}
A function $u \in L^1_{\mathrm{loc}}(\R^d)$ is said to be of bounded variation (or belonging to $\BV(\R^d)$) if its distributional derivative is a Radon measure with finite mass, which we denote by $\TV{u}$. Equivalently, when 
\begin{equation}\label{eq:defTV}\TV{u} := \abs{Du}(\R^d) =  \sup \left\{\int_{\R^d} u\, \div z~ \dd x \, \middle\vert\, z \in \mathcal C^\infty_0(\R^d\, ;\,  \R^d), \|z\|_{L^\infty(\R^d)} \ls 1\right\} < + \infty.\end{equation}

We say that a set $E$ is of finite perimeter if its characteristic function $1_E$ is of bounded variation. In that case the perimeter is defined as 
\[\per(E):=\TV{1_E}.\]
Conversely, we can recover the total variation of a function $u \in \BV(\R^d)$ with compact support from the perimeter of its level-sets through the coarea formula \cite[Thm.\ 3.40]{AmbFusPal00} 
\begin{equation}\label{eq:coarea}
\TV{u} = \int_{-\infty}^{\infty} \per(\{u > s\}) \dd s = \int_{-\infty}^\infty \per(\{u < s\}) \dd s.
\end{equation}
\end{definition}

The main geometric tool used in the rest of the article is the isoperimetric inequality for sets of finite perimeter in $\R^d$ (see \cite[Thm.\ 14.1]{Mag12}, for example): 
\begin{prop}\label{prop:isopIneq}
Let $E \subset \R^d$ be a set of finite perimeter with $|E| < +\infty$. Then we have 
\begin{equation}\label{eq:isopIneq}
\frac{\per(E)}{|E|^{\frac{d-1}{d}}} \gs \Theta_d,\text{ where }\Theta_d:=\frac{\per(B(0,1))}{|B(0,1)|^{\frac{d-1}{d}}}=d |B(0,1)|^\frac{1}{d} = d^{\frac{d-1}{d}} \per(B(0,1))^{\frac{1}{d}},
\end{equation}
and equality holds if and only if $|E \Delta \Br|=0$ for some $x \in \R^d$ and $r>0$.
\end{prop}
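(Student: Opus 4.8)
The plan is to prove both the inequality and the rigidity via Gromov's mass-transportation argument, which is the most transparent route and passes smoothly through sets of finite perimeter. Fix $E$ with $\abs{E} < \infty$ and $\per(E) < \infty$, and let $r = \bigl(\abs{E}/\abs{B(0,1)}\bigr)^{1/d}$ so that $B(0,r)$ has the same volume as $E$. By Brenier's theorem there is a convex function $\varphi$ whose gradient $T = \nabla \varphi$ pushes the restriction of Lebesgue measure to $E$ forward to its restriction to $B(0,r)$. Since both measures have constant density one, the Monge--Amp\`ere equation gives $\det DT = 1$ almost everywhere on $E$, where $DT$ denotes the derivative of $T$ that exists a.e.\ by Alexandrov's theorem and is symmetric positive semidefinite because $\varphi$ is convex.

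The key pointwise estimate is the arithmetic--geometric mean inequality applied to the eigenvalues $\lambda_1, \dots, \lambda_d \gs 0$ of $DT$: since $\prod_i \lambda_i = \det DT = 1$, we get $\div T = \sum_i \lambda_i \gs d \bigl(\prod_i \lambda_i\bigr)^{1/d} = d$ a.e.\ on $E$. Integrating over $E$ and applying the Gauss--Green theorem for sets of finite perimeter (using $\per(E) = \mathcal{H}^{d-1}(\partial^\ast E)$ from the De Giorgi structure theorem), together with $\abs{T} \ls r$ since $T$ maps into $B(0,r)$, yields
\begin{equation*}
d \abs{E} \ls \int_E \div T \dd x = \int_{\partial^\ast E} T \cdot \nu_E \dd \mathcal{H}^{d-1} \ls r \, \per(E).
\end{equation*}
Rearranging and substituting the value of $r$ gives $\per(E) \gs d \abs{B(0,1)}^{1/d} \abs{E}^{\frac{d-1}{d}} = \Theta_d \abs{E}^{\frac{d-1}{d}}$, the desired inequality.

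The step I expect to be the main obstacle is the application of Gauss--Green to $T$, since a priori $T$ is only a $\BV$ map (the gradient of a convex function) and its distributional divergence is a measure whose absolutely continuous part is $\sum_i \lambda_i$ and whose singular part is nonnegative by convexity of $\varphi$. The integration by parts must therefore be justified carefully, discarding the nonnegative singular contribution, which only reinforces the inequality; this is where the regularity theory for Monge--Amp\`ere and an approximation argument genuinely enter.

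For the rigidity statement, equality throughout forces equality in the arithmetic--geometric mean a.e., hence $\lambda_1 = \cdots = \lambda_d$ with unit product, i.e.\ $DT = \Id$ a.e.; combined with the vanishing of the singular part of $\div T$ (also forced by equality) this makes $T$ a translation on $E$, and $\abs{T} = r$ on $\partial^\ast E$ then pins $E$ down to a ball, so $\abs{E \Delta B(x,r)} = 0$. Controlling the singular part in the equality case is the delicate point of the characterization; alternatively one invokes the quantitative isoperimetric inequality, whose vanishing deficit gives $L^1$-closeness to a ball directly. As a cross-check for the inequality (though not as transparently for rigidity), one may instead use Brunn--Minkowski, $\abs{E + \eps B(0,1)}^{1/d} \gs \abs{E}^{1/d} + \eps \abs{B(0,1)}^{1/d}$: differentiating at $\eps = 0^+$ bounds the Minkowski content below by $\Theta_d \abs{E}^{\frac{d-1}{d}}$, which coincides with $\per(E)$ for smooth sets, and one passes to general finite-perimeter $E$ by smooth approximation with convergence of perimeters.
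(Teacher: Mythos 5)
Your proposal is correct in outline, but it takes a completely different route from the paper, which in fact offers no proof at all: Proposition \ref{prop:isopIneq} is quoted as a classical result with a pointer to \cite[Thm.\ 14.1]{Mag12}, where it is established by the Direct Method (existence of isoperimetric sets) combined with Steiner symmetrization and the analysis of its equality cases. What you sketch instead is Gromov's mass-transport argument in its Brenier-map form, which is precisely the proof made rigorous by Figalli, Maggi and Pratelli (Invent.\ Math.\ 182, 2010), and the two obstacles you flag are exactly the right ones. For the inequality: Brenier/McCann gives $\det DT=1$ a.e.\ on $E$ only for the Alexandrov (absolutely continuous) derivative, and since $T=\nabla\varphi$ is merely $\BV$, the Gauss--Green step must be run through normal traces of bounded divergence-measure fields (Anzellotti, Chen--Frid), after replacing $\varphi$ by a globally defined convex potential with gradient in $\overline{B(0,r)}$ so that $\|T\|_{L^\infty}\ls r$ everywhere; the distributional divergence $\mathrm{Div}\,T$ is then a nonnegative measure and, as you say, discarding its singular part only helps, giving the chain
\begin{equation}
d\abs{E}\ls \int_E \operatorname{tr}\big(D_{\mathrm{ac}}T\big)\dd x \ls \mathrm{Div}\,T\big(E^{(1)}\big)=\int_{\partial^\ast E} (T\cdot\nu_E)\dd\mathcal{H}^{d-1} \ls r\per(E),
\end{equation}
which yields $\per(E)\gs d\abs{B(0,1)}^{1/d}\abs{E}^{\frac{d-1}{d}}=\Theta_d\abs{E}^{\frac{d-1}{d}}$ as required. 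The genuinely delicate point is the one you identify in the rigidity step: knowing $D_{\mathrm{ac}}T=\Id$ a.e.\ on $E$ and that the singular part vanishes on $E^{(1)}$ does not immediately make $T$ a translation, because $E^{(1)}$ is only a measure-theoretic interior and need not be open or connected, so "integrating" $DT=\Id$ requires a real argument; your fallback of deducing the equality case from the quantitative isoperimetric inequality is logically sound (zero deficit forces zero Fraenkel asymmetry, and the infimum over centers is attained), though it invokes a theorem substantially deeper than the statement being proved. Your Brunn--Minkowski cross-check is also fine for the inequality (with the usual care that Minkowski content dominates perimeter only after smooth approximation) but, as you note, does not give rigidity. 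In short: your route buys a symmetrization-free proof with explicit constants that generalizes to anisotropic and quantitative versions, at the cost of optimal-transport and divergence-measure-field machinery; the paper deliberately treats the result as a black box, which is all it needs, since only the inequality and the value of $\Theta_d$ are used downstream (in \eqref{eq:paramChoice}, Theorem \ref{thm:densEsts} and Lemma \ref{lem:compsupp}).
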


We will also use extensively the notion of \emph{variational (mean) curvature}, defined as follows:
\begin{definition}
\label{def:varcurv}
 Let $E$ be a subset of $\R^d$ with finite perimeter. $E$ is said to have variational mean curvature $\kappa$ if $E$ minimizes the functional
 \[ F \mapsto \per(F) - \int_F \kappa. \]
\end{definition}
There is no uniqueness of the variational curvatures of a set. In fact, one can show that if $\kappa$ is a variational mean curvature for $E$, then for $f\gs 0$ in $E$ and $f \ls 0$ in $\R^d \setminus E,$ $\kappa +f$ is also a variational mean curvature for $E$. Nevertheless, in \cite{Bar94}, specific variational curvatures with particular desirable properties are introduced. Let us briefly sketch their construction:

\begin{prop}[{\cite[Thm.\ 2.1]{Bar94}}]
\label{prop:CurvL1}
Let $E$ be of finite perimeter in $\R^d$ and for $\lambda >0$, $h \in L^1(\R^d)$ with $h>0$ and $E_\lambda$ be a minimizer of 
\begin{equation}\label{eq:funclambda} F \mapsto \per(F) - \lambda \int_F h \end{equation}
among $F \subset E.$
Then, for $\lambda < \mu$, $E_\lambda \subset E_\mu$ up to a set of Lebesgue measure zero. That allows to define, for $x \in E$,
\begin{equation}\label{eq:kappavals} \kappa_E(x) := \inf \{\lambda h(x) >0 \  \vert \ x \in E_\lambda \}.\end{equation}
One can similarly define $\kappa_E$ outside $E$ by stating
\[\kappa_E(x) := -\kappa_{\R^d \setminus E}(x) \quad \text{ for } x\in \R^d \setminus E.\]
As built, $\kappa_E$ is a variational mean curvature for $E$. It minimizes the $L^1(\R^d)$ norm among variational curvatures, with $\|\kappa_E\|_{L^1(\R^d)}=2 \per(E)$ . 
\end{prop}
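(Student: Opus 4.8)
The plan is to establish, in order: existence of the constrained minimizers $E_\lambda$ of \eqref{eq:funclambda}, their monotonicity in $\lambda$, the well-posedness of the pointwise definition \eqref{eq:kappavals}, and finally the two quantitative assertions. Existence follows from the direct method: on $\set{F\subseteq E}$ the functional is bounded below because $\per\gs 0$ and $\lambda\int_F h\ls\lambda\int_E h<\infty$ (here $h\in L^1$ is essential), a minimizing sequence then has equibounded perimeters and is precompact in $L^1_{\mathrm{loc}}$, and along it $\per$ is lower semicontinuous while $F\mapsto\int_F h\dd x$ is continuous by dominated convergence. I would keep track that the argument applies to \emph{any} minimizer, since these need not be unique.

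The heart of the matter is the monotonicity $E_\lambda\subseteq E_\mu$ for $\lambda<\mu$, which I would extract from the submodularity of perimeter, $\per(A\cap B)+\per(A\cup B)\ls\per(A)+\per(B)$. Testing the minimality of $E_\lambda$ against the competitor $E_\lambda\cap E_\mu$ and that of $E_\mu$ against $E_\lambda\cup E_\mu$ (both lie in $E$), summing the two inequalities and cancelling the perimeter contributions — which is exactly where submodularity enters — leaves
\[\lambda\textstyle\int_{E_\lambda}h+\mu\int_{E_\mu}h\gs\lambda\int_{E_\lambda\cap E_\mu}h+\mu\int_{E_\lambda\cup E_\mu}h.\]
Substituting the additivity identity $\int_{E_\lambda\cup E_\mu}h=\int_{E_\lambda}h+\int_{E_\mu}h-\int_{E_\lambda\cap E_\mu}h$ collapses this to $(\lambda-\mu)\int_{E_\lambda}h\gs(\lambda-\mu)\int_{E_\lambda\cap E_\mu}h$, and since $\lambda-\mu<0$ this forces $\int_{E_\lambda\setminus E_\mu}h=0$; as $h>0$ we get $\abs{E_\lambda\setminus E_\mu}=0$. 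Fixing a genuinely nested family of representatives along rational $\lambda$ then renders $x\mapsto\inf\set{\lambda:x\in E_\lambda}$ measurable, so \eqref{eq:kappavals} is well defined, and the complementary construction on $\R^d\setminus E$ fixes $\kappa_E\gs 0$ on $E$ and $\kappa_E\ls 0$ outside.

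For the norm I would start from the layer-cake representation $\int_E\kappa_E\dd x=\int_0^\infty\!\big(V-v(\lambda)\big)\dd\lambda$, where $V:=\int_E h$ and $v(\lambda):=\int_{E_\lambda}h$ is nondecreasing, so that $V-v(\lambda)=\int_{E\setminus E_\lambda}h$. The value function $\Psi(\lambda):=\per(E_\lambda)-\lambda v(\lambda)$ is an infimum of affine functions of $\lambda$, hence concave with $\Psi'=-v$ a.e.\ and $\Psi(0)=0$, giving $\per(E_\lambda)=\int_0^\lambda\!\big(v(\lambda)-v(s)\big)\dd s$; monotone convergence then identifies $\int_0^\infty\!\big(V-v(\lambda)\big)\dd\lambda$ with $\lim_{\lambda\to\infty}\per(E_\lambda)$. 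Comparing $E_\lambda$ with $E$ itself yields $\per(E_\lambda)\ls\per(E)-\lambda\big(V-v(\lambda)\big)\ls\per(E)$, so lower semicontinuity forces $\per(E_\lambda)\to\per(E)$; hence $\int_E\kappa_E=\per(E)$, and the symmetric computation on $\R^d\setminus E$ (whose perimeter equals $\per(E)$) delivers $\|\kappa_E\|_{L^1(\R^d)}=2\per(E)$.

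It remains to show $\kappa_E$ is a variational curvature and is $L^1$-minimal. Because $\per(E)-\int_E\kappa_E=0$ by the previous step, the minimality of $E$ for $F\mapsto\per(F)-\int_F\kappa_E$ is \emph{equivalent} to the single inequality $\int_F\kappa_E\ls\per(F)$ for all finite-perimeter $F$, with equality at $F=E$. I would prove this subcalibration bound by feeding the one-sided relations furnished by minimality — $\per(E_\lambda)-\per(E_\lambda\setminus A)\ls\lambda\int_A h$ for $A\subseteq E_\lambda$, and $\lambda\int_B h\ls\per(E_\lambda\cup B)-\per(E_\lambda)$ for $B\subseteq E\setminus E_\lambda$, together with the analogues on $\R^d\setminus E$ — into the layer-cake representation of $\kappa_E$ and integrating in $\lambda$. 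The $L^1$-minimality is then soft: any variational curvature $\kappa$ satisfies $\int_E\kappa\gs\per(E)$ (test against $F=\emptyset$) and $\int_{\R^d\setminus E}\kappa\ls-\per(E)$ (test against $F=\R^d$), so $\|\kappa\|_{L^1}\gs 2\per(E)=\|\kappa_E\|_{L^1}$. The main obstacle is this last subcalibration step: the coarea bookkeeping that couples the interior and exterior families, and above all the justification of the $\lambda\to 0,\infty$ limits and the Fubini interchanges — taming the boundary terms that the factor $\lambda$ threatens to blow up — is where the real work concentrates, with the monotonicity and the a priori bound $\per(E_\lambda)\ls\per(E)$ serving as the tools that keep every limit finite.
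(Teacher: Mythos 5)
Your proposal is mostly sound, and it is worth noting that the paper itself offers no proof of this proposition: it is quoted directly from Barozzi \cite[Thm.\ 2.1]{Bar94}, so the relevant comparison is with that source. The parts you carry out are correct: existence of $E_\lambda$ by the direct method, the monotonicity $E_\lambda \subseteq E_\mu$ via submodularity, the identity $\int_E \kappa_E = \per(E)$ through the concave value function $\Psi(\lambda)=\per(E_\lambda)-\lambda v(\lambda)$ together with $\per(E_\lambda)\ls\per(E)$ and lower semicontinuity, the reduction of the variational-curvature property to the subcalibration bound $\int_F \kappa_E \ls \per(F)$ for all finite-perimeter $F$ (using that the energy of $E$ is zero), and the $L^1$-minimality by testing minimality against $F=\emptyset$ and $F=\R^d$. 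These steps are genuine content and are executed correctly.

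The genuine gap is precisely the step you defer as ``where the real work concentrates'': the subcalibration bound itself, which is the entire core of Barozzi's theorem, and the mechanism you sketch for it cannot work as stated. The one-sided relations you propose to integrate carry an extra factor of $\lambda$ that is structurally mismatched with the layer-cake formula: from $\lambda \int_B h \ls \per(E_\lambda \cup B) - \per(E_\lambda)$ with $B=(F\cap E)\setminus E_\lambda$ one only gets $\int_{(F\cap E)\setminus E_\lambda} h \ls \lambda^{-1}\left[\per(E_\lambda\cup(F\cap E))-\per(E_\lambda)\right]$, and since the bracket tends to $\per(F\cap E)$ as $\lambda \to 0$, the resulting $\lambda$-integral diverges like $\int_0 \dd\lambda/\lambda$; no Fubini bookkeeping repairs this. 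The actual proof needs two ideas absent from your outline. First, an interior minimality lemma proved by discretization and induction: for $\mu<\lambda$ and $\tilde\kappa:=\mu h 1_{E_\mu}+\lambda h 1_{E\setminus E_\mu}$, every $G\subseteq E$ satisfies $\per(G)-\int_G\tilde\kappa \gs \per(E_\lambda)-\int_{E_\lambda}\tilde\kappa$, obtained by combining $\per(G)\gs\per(G\cap E_\mu)+\per(G\cup E_\mu)-\per(E_\mu)$ with minimality of $E_\mu$ tested on $G\cap E_\mu$ and minimality of $E_\lambda$ tested on $G\cup E_\mu$; iterating over a partition $\lambda_1<\cdots<\lambda_N$ and refining shows $E_\lambda$ minimizes $G\mapsto\per(G)-\int_G\min(\kappa_E,\lambda h)$ among $G\subseteq E$, and letting $\lambda\to\infty$ (using your bound $\per(E_\lambda)\ls\per(E)$) yields $\int_F\kappa_E\ls\per(F)$ for all $F\subseteq E$. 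Second, the interior and exterior halves must be coupled by one more application of submodularity: writing $M=\R^d\setminus E$ and applying the interior result for $M$ to $M\setminus F$ gives $\int_{F\cap M}\kappa_M \gs \per(M)-\per(M\setminus F)=\per(E)-\per(E\cup F)$, whence for arbitrary $F$ one gets $\per(F)-\int_F\kappa_E \gs \per(F)+\per(E)-\per(F\cap E)-\per(E\cup F)\gs 0$. Without this discretization induction and interior/exterior coupling, your argument establishes every property of $\kappa_E$ except the one that makes it a variational curvature.
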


\begin{remark}The appearance of the density $h \in L^1(\R^d)$ is required for $\kappa_E$ to be well defined, since otherwise the functionals \eqref{eq:funclambda} would not be bounded below. Unfortunately, the curvatures obtained are not independent of $h$, even if their $L^1(\R^d)$-norm is optimal for each $h$. However, if $E$ is bounded we are allowed to choose $h(x)=1$ for all $x$ in $E$ or even in its convex envelope. The curvature obtained for such an $h$ minimizes all the $L^p(E)$ norms for $1<p<+\infty$, and its values on $E$ are uniquely defined by this minimizing property \cite[Thm.\ 3.2]{Bar94}. Consequently, there is a canonical choice for the variational curvature $\kappa_E$ inside $E$, and in the rest of the article we will use specific values of these variational curvatures only inside their corresponding sets.
\end{remark}

\begin{example}
\label{ex:curvball}
 Following Proposition \ref{prop:CurvL1} with $h(x) = 1_{B(0,R)}(x) + \frac{1}{|x|^{2d}} 1_{\R^d \setminus B(0,R)}(x) $, the ball $B(0,R)$ in $\R^d$ has a curvature
  \[ \kappa(x) = \frac{d}{R} 1_{B(0,R)}(x) -\frac{d-1}{|x|} 1_{\R^d \setminus B(0,R)}(x). \]
  We can show this by noting that for $\lambda > 0$, a minimizer of $F \mapsto \per(F) - \lambda |F|$ among $F \subset B(0,R)$ is $\emptyset$ for $\lambda \ls \frac dR$ and $B(0,R)$ for $\lambda \gs \frac dR.$
  Similarly, minimizers of 
  \[ F \mapsto \per(F) - \lambda \int_F h(x) \dd x\] among $F \supset B(0,R)$ are $B(0,r)$ with $r= \left( \frac{\lambda}{d-1} \right)^{\frac{1}{2d-1}}$, which taking into account $h$ as in \eqref{eq:kappavals} gives the second part of $\kappa.$ 
\end{example}

For further information about functions of bounded variation and sets of finite perimeter, see \cite{AmbFusPal00, Mag12}. An overview on variational curvatures and their interplay with the regularity of $\partial E$ can be found in \cite{GonMas94}.

\subsection{Organization of the paper}
We first present an example of noisy data for total variation denoising in the three-dimensional space in which the level-sets of the regularized solutions do not converge in Hausdorff distance to those of the noiseless data, regardless of the parameter choice used.

Motivated by this example, we study the existence and convergence of minimizers of the regularized problem \eqref{eq:primalalphaw} while keeping the dimension and integrability as general as possible. We compute then the dual problem and find that in the noiseless case its solutions strongly converge under the assumption of the standard source condition, and then study the effect of the noise by proving a quantitative stability estimate for these dual solutions.

Next, we see how the convergence of the dual solution and a parameter choice inequality arising from the stability estimate imply uniform weak regularity on the level-sets of the primal minimizers. Under the assumption of their compact support, this regularity makes equivalent the strong convergence of the primal minimizers in $L^1$ and the Hausdorff convergence of their level-sets.

We then explore whether this compact support can be derived from the problem itself. This turns out to be only possible for the exponent appearing in the Sobolev embedding of the space of bounded variation functions in the whole $d$-dimensional space.

Finally, we see how the previous analysis allows us to obtain analogous results in reasonable bounded domains, with Dirichlet or Neumann boundary conditions.

\section{The dimension matters: ROF denoising in 3D}\label{sec:rofCounterex}
We begin by justifying the need of generality in our formulation by showing through a counterexample that convergence of level-sets of minimizers of \eqref{eq:primalalphaw} does not necessarily hold when $A=\Id$, $q=\sigma=2$ and $d=3$. This corresponds to an straightforward extension to three dimensions of the Rudin-Osher-Fatemi (ROF) denoising model \cite{RudOshFat92}, a choice that has been made in some works, for example \cite{BehZhoEftAdi12}.

We recall that the level-set $\{u > s\}$ of value $s$ of the ROF solution $u$ for some data $f$ minimizes the functional
\begin{equation}\label{eq:roflevel}E \mapsto \alpha\per(E) - \int_E f-s,\end{equation}
which can be easily proved using the coarea formula \eqref{eq:coarea}.

The functions in our counterexample will be linear combinations of characteristic functions of two balls, so we begin by showing that in some situations the three-dimensional ROF problem can be solved explicitly for such data.
\begin{lemma}\label{lem:farness}
Assume that $f$ is of the form
\[f=c_1 1_{B(0,r_1)} + c_2 1_{B(x_0,r_2)},\]
with $c_1, c_2 >0$ as well as $r_1,r_2 >0$. Then there is a constant $D$ (depending on $r_1, r_2$) such that if $|x_0| > D$ the level-sets $E_s:=\{u >s\}$ of ROF denoising satisfy $E_s \subseteq B(0,r_1) \cup B(x_0,r_2)$ for each $s \gs 0$.
\end{lemma}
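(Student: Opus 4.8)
The plan is to show that truncating $E_s$ to $B(0,r_1)\cup B(x_0,r_2)$ never increases the energy minimized by the level-sets, so that the containment will follow from uniqueness of the ROF solution. Writing $E:=E_s$, which by \eqref{eq:roflevel} minimizes $G(F):=\alpha\per(F)-\int_F(f-s)\dd x$, I would take the competitor $F:=E\cap\big(B(0,r_1)\cup B(x_0,r_2)\big)$. Since $f$ is supported in $B(0,r_1)\cup B(x_0,r_2)$ we have $\int_F f=\int_E f$, while $s\gs0$ and $F\subseteq E$ give $-\int_F(f-s)\ls-\int_E(f-s)$; hence $G(F)-G(E)\ls\alpha\big(\per(F)-\per(E)\big)$. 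Everything then reduces to the purely geometric claim that, once the balls are far enough apart, intersecting with their (non-convex) union does not increase perimeter:
\[\per\Big(E\cap\big(B(0,r_1)\cup B(x_0,r_2)\big)\Big)\ls\per(E)\qquad\text{whenever }|x_0|>D(r_1,r_2).\]
Granting this, $F$ is also a minimizer, and since $F\subseteq E$ with equality exactly when $E\subseteq B(0,r_1)\cup B(x_0,r_2)$, uniqueness of $u$ together with the usual monotonicity of level-sets yields the containment for every $s\gs0$.

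To prove the geometric inequality I would first use that the balls are disjoint for $|x_0|>r_1+r_2$, so the left-hand perimeter splits as $\per(E\cap B(0,r_1))+\per(E\cap B(x_0,r_2))$. Setting $e:=x_0/|x_0|$ and introducing the half-spaces $H_1:=\{x\cdot e\ls r_1\}\supseteq B(0,r_1)$ and $H_2:=\{x\cdot e\gs|x_0|-r_2\}\supseteq B(x_0,r_2)$, repeated use of the fact that intersecting a finite-perimeter set with a convex set does not increase perimeter gives $\per(E\cap B(0,r_1))\ls\per(E\cap H_1)$ and similarly for the second ball. Decomposing $\partial^\ast E$ according to the slab $S:=\{r_1<x\cdot e<|x_0|-r_2\}$ and the two half-spaces produces the identity
\[\per(E\cap H_1)+\per(E\cap H_2)=\per(E)-\mathcal H^{2}(\partial^\ast E\cap S)+v(r_1)+v(|x_0|-r_2),\]
where $v(t):=\mathcal H^{2}\big(E\cap\{x\cdot e=t\}\big)$ is the cross-sectional area. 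Thus the claim follows from the slab estimate $\mathcal H^{2}(\partial^\ast E\cap S)\gs v(r_1)+v(|x_0|-r_2)$, which I will call $(\star)$.

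I would establish $(\star)$ by slicing $E$ along $e$ across the gap, of width $L:=|x_0|-r_1-r_2$. On the gap perimeter $P:=\mathcal H^{2}(\partial^\ast E\cap S)$ there are two lower bounds: a \emph{horizontal} one from the isoperimetric inequality applied to the planar slices $E_t$, namely $P\gs\int_{r_1}^{|x_0|-r_2}\per(E_t)\dd t\gs 2\sqrt{\pi}\int_S v(t)^{1/2}\dd t$, and a \emph{vertical} one from the total variation of the profile, $P\gs|Dv|(S)\gs\big(v(r_1)-m\big)+\big(v(|x_0|-r_2)-m\big)$, where $m:=\operatorname{ess\,inf}_S v$. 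If the slices never become thin, the horizontal bound already yields $P\gs 2\sqrt\pi\,m^{1/2}L$, which exceeds $v(r_1)+v(|x_0|-r_2)$ once $L$ is large; if instead the profile pinches to a small value $m$, the vertical bound recovers almost all of $v(r_1)+v(|x_0|-r_2)$, with a deficit controlled by $m$ and hence by $L$.

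The main obstacle is precisely the intermediate ``thin neck'' regime, where each bound taken alone is too weak: there one must combine them, exploiting that a long thin connection contributes lateral perimeter proportional to $L$ while an abrupt necking contributes a shoulder of vertical perimeter, and that these occur in spatially separated parts of the slab so that their contributions add rather than overlap. This is where $|x_0|>D(r_1,r_2)$ enters, the threshold depending only on the radii because the competing cross-sections are controlled by the ball geometry. To make the separation of scales rigorous and exclude degenerate pinchings, I expect to invoke the density and regularity estimates satisfied by ROF level-sets, which have variational mean curvature bounded by $(\|f\|_\infty+|s|)/\alpha$ in the sense of Definition \ref{def:varcurv} and Proposition \ref{prop:CurvL1}; these prevent $\partial^\ast E$ from developing arbitrarily thin filaments across a wide gap, after which the comparison with $F$ becomes strict and the conclusion follows.
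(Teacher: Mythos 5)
Your reduction to the competitor $F=E\cap\big(B(0,r_1)\cup B(x_0,r_2)\big)$ is fine (for $s>0$ the data term even gives $|E\setminus F|=0$ directly once the perimeter inequality is granted, and $s=0$ follows by taking unions over $s>0$). The genuine gap is that the geometric statement you reduce everything to is \emph{false}. Your slab estimate $(\star)$, equivalently the inequality $\per(E\cap H_1)+\per(E\cap H_2)\ls\per(E)$, fails for arbitrary sets of finite perimeter no matter how wide the slab is: take $E=B(z,R)$ a ball centered at the midpoint of the slab $S$ of width $L=|x_0|-r_1-r_2$, with $R\gg L$. Then the two cross-sections have area $v(r_1)=v(|x_0|-r_2)=\pi\big(R^2-\tfrac{L^2}{4}\big)$, while the lateral boundary inside the slab is a spherical zone of area
\begin{equation}
\mathcal H^{2}\big(\partial^\ast E\cap S\big)=2\pi R L,
\end{equation}
so $(\star)$ demands $2\pi RL\gs 2\pi\big(R^2-\tfrac{L^2}{4}\big)$, which is violated as soon as $R>\tfrac{1+\sqrt2}{2}L$. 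Both of your lower bounds (horizontal and vertical) are correct but genuinely insufficient here: for this $E$ the profile $v$ is nearly constant, so $|Dv|(S)$ is tiny, and the isoperimetric bound gives only $2\pi RL$. The failure is created exactly at the step $\per(E\cap B_i)\ls\per(E\cap H_i)$: passing from the balls to half-spaces discards the fact that $\per(E\cap B_i)\ls \per(B_i)=4\pi r_i^2$ is \emph{a priori} bounded, which is what makes the original inequality for the union of two distant balls plausible. So the proof collapses at its central geometric step; your closing appeal to curvature/density estimates for ROF level-sets cannot rescue it as written, because your strategy requires the inequality for arbitrary competitors (or else you must re-derive structural information about the minimizer, which is a different argument).

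For comparison, the paper never attempts a perimeter inequality valid for all sets; it exploits minimality of $E_s$ throughout. After reducing to $E_0$ and to connected components, it uses rotational symmetry of the solution, $\mathcal C^{1,\alpha}$ regularity of $\Lambda$-minimizers, variational-curvature comparisons (Example \ref{ex:curvball}) and the isoperimetric inequality to show each component touching only one ball equals that ball, and rules out a component touching both balls by Nitsche's non-existence results for connected minimal surfaces spanning two distant coaxial circles; this is where $D=D(r_1,r_2)$ enters. An alternative elementary route, given later in the paper (after Lemma \ref{lem:compsupp}), shows via curvature comparison that $E_0$ must contain both balls, derives a mass bound from the isoperimetric inequality, and then uses uniform density estimates along the gap to contradict that bound when $|x_0|$ is large. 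If you want to keep your truncation strategy, you would have to prove the two-ball perimeter inequality directly (e.g., by radial projections onto the two spheres, using the separation to control double counting), which is a nontrivial lemma you have not established.
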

\begin{proof}
Without loss of generality, we may assume that $\alpha=1$, the other cases being obtained by rescaling of $f$ and $s$.

First, using the symmetry of revolution of the problem along the axis defined by the origin and $x_0$ and its strict convexity, we have that the unique solution of the ROF problem also possesses this symmetry, implying that each $E_s$ has the same symmetry.

Then we notice that because $f-s \in L^\infty$ we may apply regularity theorems for $\Lambda$-minimizers of the perimeter \cite[Thm.\ 26.3]{Mag12} to obtain that the boundaries $\partial E_s$ are in fact $C^{1,\alpha}$ surfaces for $\alpha < 1/2$.

On the other hand, since $E_s$ minimize \eqref{eq:roflevel} we have that $E_s$ must be contained (up to a set of measure zero) in $E_0$. Indeed, by minimality of each set, we have
\begin{equation}\label{eq:complevels}\begin{aligned}
\per(E_s)-\int_{E_s} f-s &\ls \per(E_s \cap E_0)-\int_{E_s \cap E_0} f-s, \text{ and }\\
\per(E_0)-\int_{E_0} f &\ls \per(E_s \cup E_0)-\int_{E_s \cup E_0} f.
\end{aligned}\end{equation}
Summing these inequalities, using the inequality (see \cite[Lem.\ 12.22]{Mag12})
\begin{equation}\label{eq:unionint}\per(E_s \cap E_0) + \per(E_s \cup E_0)\ls \per(E_s)+\per(E_0)\end{equation}
and linearity of the integrals, we end up with $s|E_s \setminus E_0| \ls 0$, so that $|E_s \setminus E_0| = 0$. Combining with this fact with the regularity, we only need to prove the claim for $E_0$. 

Moreover, since connected components of $E_0$ are also minimizers of \eqref{eq:roflevel}, we may also assume that $E_0$ is connected. We can distinguish three cases: $E_0$ could intersect neither $B(0,r_1)$ nor $B(x_0,r_2)$, one of them, or both.

The first case cannot happen, since if $E_0$ is nonempty, it must intersect either $B(0,r_1)$ or $B(x_0,r_2)$. To prove this claim, assume otherwise and notice that since $E_0$ minimizes \eqref{eq:roflevel}, it admits $f$ as a variational curvature. Since $f \gs 0$ and $f=0$ on $E_0$ by assumption, we would have that $E_0$ also admits the zero function as a variational curvature, making it an absolute minimizer of perimeter in $\R^3$, which can only be the empty set or the whole $\R^3$.

For the second case we have that if $E_0$ intersects one of the balls (assumed to be $B(0,r_1)$ without loss of generality) but not the other, then it must contain the whole $B(0,r_1)$. To prove this, we note that by the computation in Example \ref{ex:curvball}, $B(0,r_1)$ admits an optimal variational curvature such that
\[\kappa_{B(0,r_1)}1_{B(0,r_1)}=\frac{3}{r_1}1_{B(0,r_1)}=\frac{\per(B(0,r_1)}{|B(0,r_1)|}1_{B(0,r_1)}.\]
As before, we can use optimality to write
\begin{equation}\label{eq:compball}\begin{aligned}
\per(B(0,r_1))-\int_{B(0,r_1)} \kappa_{B(0,r_1)} &\ls \per(B(0,r_1) \cap E_0)-\int_{B(0,r_1) \cap E_0} \kappa_{B(0,r_1)}, \text{ and }\\
\per(E_0)-\int_{E_0} f &\ls \per(B(0,r_1) \cup E_0)-\int_{B(0,r_1) \cup E_0} f,
\end{aligned}\end{equation}
which leads to 
\[\left(c_1-\frac{3}{r_1}\right)|B(0,r_1) \setminus E_0| \ls 0,\]
so as long as $c_1 > 3/r_1$, we have that $B(0,r_1) \subseteq E_0$. We are left with the case $c_1 \ls 3/r_1$, for which we will need the isoperimetric inequality \eqref{eq:isopIneq} that can be written as
\begin{equation}\label{eq:isopball}\per(E_0) \gs \per(B(0,r_0)), \text{ with } r_0=\left(\frac{3}{4 \pi}|E_0|\right)^{1/3},\end{equation}
with equality only when $E_0$ is a ball of radius $r_0$. Now, if $|E_0|\gs|B(0,r_1)|$ (or equivalently $r_0 \gs r_1$) then we must have $B(0,r_1) \subseteq E_0$, since otherwise we would have
\[\per(E_0)-\int_{E_0}f > \per(B(0,r_0))-c_1|B(0,r_1)|=\per(B(0,r_0))-\int_{B(0,r_0)}f,\]
contradicting minimality of $E_0$ in \eqref{eq:roflevel}. If on the other hand $r_1 > r_0$, we obtain
\begin{equation}\begin{aligned}
\per(E_0)-\int_{E_0}f &= \per(E_0) - c_1|E_0 \cap B(0,r_1)| \\
                      &\gs \per(B(0,r_0)) - c_1|E_0|\\
                      &\gs \per(B(0,r_0)) - \frac{3}{r_1}|E_0|\\
                      &=4\pi r_0^2 - \frac{3}{r_1}\left(\frac{4}{3}\pi r_0^3\right)\\
                      &=4\pi r_0^2 \left(1-\frac{r_0}{r_1}\right)>0,
\end{aligned}\end{equation}
and this computation contradicts minimality of $E_0$, since it implies that it has strictly higher energy in \eqref{eq:roflevel} than the empty set.

Therefore, we end up with $B(0,r_1) \subseteq E_0$ but $E_0 \cap B(x_0, r_2) = \emptyset$. We must in fact have $E_0 = B(0,r_1)$, since otherwise the isoperimetric inequality \eqref{eq:isopball} would imply that $B(0,r_1)$ has a smaller perimeter than $E_0$ and, since $\restr{f}{E_0 \setminus B(0,r_1)}=0$, also strictly lower energy in \eqref{eq:roflevel}.

Finally we are left with the third case, in which $E_0$ is connected and intersects both balls. Using the symmetry and regularity, we have that $\partial E_0 \setminus \left( \partial B(0,r_1) \cup \partial B(x_0,r_2)\right)$ contains a minimal surface (of class $\mathcal C^{1,\alpha}$, as before) which is bounded by circles contained on planes orthogonal to $x_0$ and of radius less than or equal to $r_1$ and $r_2$ respectively. In fact, Schauder regularity theorems for elliptic equations can be used to obtain that this surface is $\mathcal C^\infty$ \cite[Thm.\ 27.3]{Mag12}. We can then conclude that this situation is impossible by applying classical results on necessary conditions for the existence of minimal surfaces bounded by planar curves (circles, in this case) \cite{Nit64a, Nit64b}. 
\end{proof}

\begin{remark}
The articles \cite{Nit64a, Nit64b} are likely the first in the direction of understanding from which distance $D$ any minimal surface spanning two orthogonal circles of radii $r_1$ and $r_2$ cannot be connected, providing $D\ls 3 \max(r_1, r_2)$, while the more recent \cite{Ros98} improves the bound to $D\ls 2 \max(r_1, r_2)$.
\end{remark}

\begin{remark}\label{rem:notonlyzero}
A closer examination of the arguments above shows that we have actually proved that each connected component of $E_0$ equals either $B(0,r_1)$ or $B(x_0,r_2)$. In fact, the arguments used for components that only intersect one ball also extend to components of $E_s$ with $s>0$ by just replacing $c_1$ by $c_1 - s$, so that in fact each connected component of $E_s$ equals either $B(0,r_1)$ or $B(x_0,r_2)$.
\end{remark}

\begin{remark}
In fact, Lemma \ref{lem:farness} can be proved without making use of the strong $\mathcal C^{1,\alpha}$ regularity. After developing the weak regularity tools that it requires, we will present in Section \ref{sec:infinity} a self-contained proof of this lemma, with the only price to pay being a worse control on $D$.
\end{remark}

\begin{example}\label{ex:3Dcounterex}
Assume that $\Omega \subset \R^3$ is bounded. In this situation, we consider denoising of the function $f=1_{B(0,1)}$ and a family of perturbations 
\[w_n:=c_n 1_{B(x_0, r_n)}, \text{ with }x_0=(3,0,0)\text{ and }r_n \ls 1.\]
Notice that $\|w_n\|_{L^2}=(\frac{4\pi}{3} c_n^2 r_n^3)^{1/2}$.
By Lemma \ref{lem:farness} and Remark \ref{rem:notonlyzero} we can compute the solution of \eqref{eq:primalalphaw} explicitly in this case, which will necessarily be of the form
\[u_n = b_n 1_{B(0,1)} + s_n 1_{B(x_0,r_n)},\]
and optimality provides 
\[b_n = \left( 1 - \frac{3}{2}\alpha_n \right)^+, \text{ and } s_n = \left(c_n - \frac{3}{2 r_n}\alpha_n \right)^+.\]
The goal is then to show that there is a choice of $c_n$ and $r_n$ such that $\|w_n\|_{L^2}$ goes to zero fast enough, but for which $s_n$ does not vanish, so the perturbation appears in the level sets of the denoised function.

In \cite{IglMerSch18} Hausdorff convergence of level-sets was proved under the condition $\|w_n\|_{L^2}/\alpha_n \ls C$. In the limit case $\alpha_n = C \|w_n\|_{L^2}$, then it suffices to choose $r_n=1/n$ and $c_n=n$, in which case we have $\|w_n\|_{L^2}=C/\sqrt{n}$, $\alpha_n=C/\sqrt{n}$, and $s_n=n-C\sqrt{n}$, as required.

One could think that by applying more aggressive regularization (a case still covered in the condition $\|w_n\|_{L^2}/\alpha_n \ls C$) convergence of level-sets could be restored. In fact, this is not the case. To see this, assume that we are given a strictly increasing function $f(t)\ls t$, with $f(0)=0$. Then we can choose
\[\alpha_n=\frac{2}{3 n},\ c_n = \frac{1}{n f\left(\frac{1}{n}\right)^2}+1,\text{ and }r_n=f\left(\frac{1}{n}\right)^2.\]
With this choice, we have $c_n \gs n+1$ and $s_n = 1$, preventing convergence of the level-sets corresponding to values less than one. Furthermore, if $n$ is large enough so that $1/n + f(1/n)\ls \sqrt{3/(4 \pi)}$ we also have
\[\|w_n\|_{L^2}=\sqrt{\frac{4 \pi}{3}} c_n r_n^{\frac{3}{2}}=\sqrt{\frac{4 \pi}{3}}\left(\frac{1}{n}f\left(\frac{1}{n}\right) + f\left(\frac{1}{n}\right)^2\right) \ls f\left(\frac{1}{n}\right)=f\left(\frac{3}{2}\alpha_n \right).\]
Since $f$ was arbitrary among sublinear functions, the resulting sequences $w_n$ `defeat' any sensible parameter choice rule based on the $L^2$ norm used in the data term. 

In the sequel we will see that convergence can be restored for domains of any dimension, if the error is measured in an adequate $L^q$ space with $q \neq 2$.
\end{example}

\section{Convergence of primal and dual solutions}\label{sec:primaldual}
We start by studying existence of minimizers for \eqref{eq:primalalphaw} and their convergence, the dual problem, and convergence of the corresponding dual solutions.

\begin{prop}\label{prop:linearexistence}Assume there is at least one solution $u_0$ of $Au=f$ with $\TV{u_0} < +\infty$, and that either $q=d/(d-1)$ or $\Omega$ is bounded. Then the problem \eqref{eq:primalalphaw} possesses at least one minimizer. If $A$ is injective, the minimizer is unique.\\
In addition, if $\alpha_n \to 0$ and $w_n \in Y$ are such that $\Vert w_n \Vert^\sigma_Y/\alpha_n$ is bounded, and if $u_n$ are minimizers of
\[\inf_{u \in L^q(\Omega)} \frac{1}{\sigma}\|Au-f-w_n\|^\sigma_{Y}+\alpha_n \TV{u}, \]
then we have (up to possibly taking a subsequence) the weak convergence $u_n \rightharpoonup u^\dag$ in $L^{d/(d-1)}(\Omega)$, where $u^\dag$ is a solution of $Au=f$ of minimal total variation among such solutions. Furthermore, if $\bar q < d/(d-1)$, we also have $u_n \to u^\dag$ in the (strong) $L^{\bar q}_{\mathrm{loc}}(\R^d)$ topology.
\end{prop}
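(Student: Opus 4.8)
The plan is to prove both assertions by the classical direct method and a Tikhonov-type regularization argument, with the Sobolev embedding of $\BV$ at the exponent $d/(d-1)$ supplying the crucial compactness.

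\emph{Existence and uniqueness.} First I would take a minimizing sequence $u_k$ for \eqref{eq:primalalphaw}; since the competitor $u_0$ has finite energy the infimum is finite, so along $u_k$ the $\alpha\TV{\cdot}$ term bounds $\TV{u_k}$ and the fidelity term bounds $\|Au_k\|_Y$. The heart of the existence argument is to upgrade the seminorm bound $\TV{u_k}$ to a genuine bound on $\|u_k\|_{L^q}$. When $q=d/(d-1)$ I would use the global Sobolev inequality $\|u\|_{L^{d/(d-1)}(\R^d)}\ls C_d\TV{u}$ directly. When $\Omega$ is bounded I would instead split $u_k$ into its mean $\bar u_k$ and its oscillation, control the oscillation by the Poincar\'e--Wirtinger inequality $\|u_k-\bar u_k\|_{L^{d/(d-1)}}\ls C\,\TV{u_k}$, and recover the mean from the data term: writing $Au_k=A(u_k-\bar u_k)+\bar u_k\,A(1_\Omega)$ and using the bound on $\|Au_k\|_Y$ bounds $\abs{\bar u_k}$ whenever $A(1_\Omega)\neq 0$, while if $A(1_\Omega)=0$ the whole functional is invariant under additive constants and the sequence may be normalized to zero mean. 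Reflexivity of $L^q$ (here $1<q<\infty$) then yields a subsequence with $u_k\rightharpoonup u^\ast$, and since $\TV{\cdot}$ is weakly lower semicontinuous, $A$ is weak-to-weak continuous, and $y\mapsto\|y\|_Y^\sigma$ is convex and continuous hence weakly lower semicontinuous, $u^\ast$ is a minimizer. For uniqueness I would note that $Y$ uniformly convex is in particular strictly convex, so $y\mapsto\tfrac1\sigma\|y\|_Y^\sigma$ is strictly convex for $\sigma>1$; composing with an injective $A$ makes the fidelity strictly convex in $u$, and adding the convex $\TV{\cdot}$ preserves strict convexity, forcing a unique minimizer.

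\emph{Convergence of minimizers.} For the second part I would start from the comparison of $u_n$ against a solution $\udag$: minimality together with $A\udag=f$ gives
\[\frac1\sigma\|Au_n-f-w_n\|_Y^\sigma+\alpha_n\TV{u_n}\ls\frac1\sigma\|w_n\|_Y^\sigma+\alpha_n\TV{\udag}.\]
Dividing the total variation part by $\alpha_n$ and invoking the boundedness of $\|w_n\|_Y^\sigma/\alpha_n$ shows $\TV{u_n}$ is bounded, while keeping the fidelity term shows $\|Au_n-f-w_n\|_Y^\sigma\ls\|w_n\|_Y^\sigma+\sigma\alpha_n\TV{\udag}\to 0$, whence $Au_n\to f$ strongly in $Y$. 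The same compactness discussion as above (global Sobolev in the critical case, Poincar\'e plus control of the mean in the bounded case) produces a subsequence with $u_n\rightharpoonup\tilde u$ in $L^{d/(d-1)}(\Omega)$; weak-to-weak continuity of $A$ and uniqueness of the weak limit give $A\tilde u=f$, so $\tilde u$ solves the equation. Lower semicontinuity of $\TV{\cdot}$ combined with the comparison inequality above, now tested against an arbitrary solution and using that the fidelity vanishes in the limit, then identifies $\tilde u$ as a solution of minimal total variation, which we call $\udag$.

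\emph{Strong local convergence and main obstacle.} Finally, for $\bar q<d/(d-1)$, I would use the compact embedding $\BV(B)\hookrightarrow\hookrightarrow L^1(B)$ on each ball $B$: since $\TV{u_n}$ and $\|u_n\|_{L^1_{\mathrm{loc}}}$ are bounded, a diagonal argument over an exhaustion of $\R^d$ by balls extracts a subsequence converging to $\tilde u$ strongly in $L^1_{\mathrm{loc}}$. Interpolating against the uniform $L^{d/(d-1)}$ bound through H\"older's inequality,
\[\|u_n-\tilde u\|_{L^{\bar q}(B)}\ls\|u_n-\tilde u\|_{L^1(B)}^{\theta}\,\|u_n-\tilde u\|_{L^{d/(d-1)}(B)}^{1-\theta}\]
for the appropriate $\theta\in(0,1)$, upgrades this to strong convergence in $L^{\bar q}_{\mathrm{loc}}(\R^d)$. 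I expect the delicate step throughout to be the compactness, that is, passing from the total variation bound to an $L^q$ (respectively $L^{d/(d-1)}$) bound: the total variation controls only the oscillation, so one must recover the mean from the data term and treat the degenerate case $A(1_\Omega)=0$ separately, and the whole argument must be arranged so that the critical unbounded case and the subcritical bounded case are absorbed by the single Sobolev scaling $q\ls d/(d-1)$.
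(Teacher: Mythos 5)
Your overall skeleton---the direct method with the $\BV$ Sobolev embedding at exponent $d/(d-1)$ for existence, energy comparison against a solution for the convergence part, and compact embedding plus interpolation for the strong $L^{\bar q}_{\mathrm{loc}}$ convergence---is the same as the paper's, and much of it is carried out correctly and in more self-contained form than the paper, which delegates the convergence statements to the literature. However, there is one genuine gap: the identification of the weak limit as a solution of \emph{minimal} total variation. Testing minimality of $u_n$ against an arbitrary solution $v$ of $Av=f$ gives
\[
\frac1\sigma\|Au_n-f-w_n\|_Y^\sigma+\alpha_n\TV{u_n}\ls\frac1\sigma\|w_n\|_Y^\sigma+\alpha_n\TV{v},
\qquad\text{hence}\qquad
\TV{u_n}\ls\TV{v}+\frac{\|w_n\|_Y^\sigma}{\sigma\alpha_n},
\]
and under the stated hypothesis the quotient $\|w_n\|_Y^\sigma/\alpha_n$ is only \emph{bounded}, not infinitesimal. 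The vanishing of the fidelity ($Au_n\to f$ in $Y$) does not remove this term: keeping the fidelity in the comparison only replaces it by $\bigl(\|w_n\|_Y^\sigma-\|Au_n-f-w_n\|_Y^\sigma\bigr)/(\sigma\alpha_n)$, and a minimizer generically fits part of the noise, so this difference can be of order $\alpha_n$. Lower semicontinuity therefore yields only $\TV{\tilde u}\ls\TV{v}+\limsup_n\|w_n\|_Y^\sigma/(\sigma\alpha_n)$, which does not prove minimality. Your argument closes only under the stronger assumption $\|w_n\|_Y^\sigma/\alpha_n\to 0$; this is what the results cited in the paper's own proof assume, and it follows, for instance, from the parameter choice \eqref{eq:paramChoice}, since then $\|w_n\|_Y^\sigma/\alpha_n\ls C\,\alpha_n^{1/(\sigma-1)}\to 0$. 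Under mere boundedness, an additional idea beyond the standard comparison argument is needed.

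A secondary, repairable issue concerns the bounded-domain case of the existence proof. In Proposition \ref{prop:linearexistence} the total variation \eqref{eq:defTV} is computed over all of $\R^d$, i.e.\ for the extension of $u\in L^q(\Omega)$ by zero, so jumps across $\partial\Omega$ are counted. Accordingly, the paper applies the Sobolev inequality to that extension: the additive constant it produces must vanish because the extended function lies in $L^q(\R^d)$ and $\R^d$ is unbounded, after which the $L^{d/(d-1)}$ bound (and, by H\"older on the bounded $\Omega$, the $L^q$ bound) follows with no reference to the operator. Your Poincar\'e--Wirtenger decomposition with recovery of the mean from $A(1_\Omega)$ is the argument required for the \emph{relative} (Neumann) total variation treated in Section \ref{sec:bounded}; under the convention actually in force in this proposition, your claim that the functional is invariant under additive constants when $A(1_\Omega)=0$ is false, since adding a constant changes the boundary jump and hence $\TV{\cdot}$. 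This does not endanger the result---with the paper's convention the whole case distinction is unnecessary---but as written, your treatment of the degenerate case rests on an incorrect statement.
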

\begin{proof}For the existence statement, let $(u_k)$ be a minimizing sequence. Since $u_k \in L^q(\R^d)$, we have that $u_k \in L^1_{\text{loc}}(\R^d)$, so the Sobolev inequality for $\BV$ functions \cite[Thm.\ 3.47]{AmbFusPal00} provides us with constants $c_k$ such that
\[\|u_k - c_k\|_{L^{\frac{d}{d-1}}(\R^d)} \ls C \TV{u_k},\]
and we must have $c_k = 0$ since $u_k \in L^q(\R^d)$. The $u_k$ being a minimizing sequence, $\TV{u_k}$ is bounded so using a standard compactness result in $\BV$ \cite[Thm.\ 3.23]{AmbFusPal00} and the Banach-Alaoglu theorem we obtain that $u_k$ converges (up to possibly taking a subsequence) weakly in $L^{d/(d-1)}(\R^d)$ and strongly in $L^1_{\text{loc}}(\R^d)$ to some limit $u \in L^{d/(d-1)}(\R^d)$. 

If $q=d/(d-1)$, since $A:L^q(\Omega) \to Y$ is bounded linear, $Au_k$ also converges weakly to $Au$ in $Y$. Lower semicontinuity of the norm with respect to weak convergence, and of the total variation with respect to strong $L^1_{\text{loc}}(\R^d)$ convergence \cite[Remark 3.5]{AmbFusPal00} imply that $u$ realizes the infimal value in \eqref{eq:primalalphaw}, and we obtain that $u$ is a solution of \eqref{eq:primalalphaw}. 

If on the contrary $q<d/(d-1)$, we cannot conclude that $u \in L^q(\Omega)$ unless $|\Omega| \ls +\infty$, in which case $\|u\|_{L^q(\Omega)}\ls |\Omega|^{1/q-(d-1)/d}\|u\|_{L^{d/(d-1)}(\Omega)} < +\infty$. This kind of inequality also provides boundedness of $u_n$ in $L^q(\Omega)$ and therefore the convergence of $Au_k$ to $Au$.

The proof of uniqueness, using injectivity of $A$ and strict convexity of the data term, follows entirely along the lines of the $L^2$ case treated in \cite[Prop.\ 1]{IglMerSch18}.

Existence of $u^\dag$ is covered in \cite[Thm.\ 3.25]{SchGraGroHalLen09}. Since $u_n \in L^q(\R^d)$ and $\Vert w_n \Vert^\sigma_Y/\alpha_n$ is bounded implies $\TV{u_n}$ is also bounded, we have that $\|u_n\|_{L^{d/(d-1)}}$ is again bounded \cite[Thm.\ 3.47]{AmbFusPal00}, giving weak convergence of a subsequence. The strong convergence statement relies on compact embeddings for $\BV$ along similar lines, and a proof can be found in \cite[Thm.\ 5.1]{AcaVog94}.
\end{proof}

\begin{remark}For the counterexample of Section \ref{sec:rofCounterex}, we have that $2=q > d/(d-1) = 3/2$. Existence of solutions can still be proven by the above straightforward methods, but only because $A$ is the identity, so that the data term provides a bound in $L^q$.
\end{remark}

\begin{prop}\label{prop:lineardual}The Fenchel dual problem of \eqref{eq:primalalphaw}, writes, for $\alpha > 0$, 
\begin{equation}\label{eq:dualalphaw}\sup_{\substack{p \in Y^\ast \\ A^\ast p \in \partial \TV{0}}} \scal{p}{f+w}_{(Y^\ast, Y)}-\frac{\alpha^{\frac{1}{\sigma-1}}}{\sigma'}\|p\|_{Y^\ast}^{\sigma'},\tag{$D_{\alpha,w}$}\end{equation}
where $1/\sigma+1/\sigma'=1$. Moreover, strong duality holds, the maximizer $p_{\alpha, w}$ of \eqref{eq:dualalphaw} is unique, and the following optimality condition holds:
\begin{equation}\label{eq:dualalphawdiff}v_{\alpha,w}:=A^\ast p_{\alpha, w} \in \partial \TV{u_{\alpha, w}}.\end{equation}

Here, the subgradient is understood to be with respect to the $\left(L^q(\Omega),L^{q'}(\Omega)\right)$ pairing, so that $\partial \TV{0} \subset L^{q'}(\Omega)$.
\end{prop}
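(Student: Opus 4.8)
The plan is to derive \eqref{eq:dualalphaw} by Fenchel--Rockafellar duality. I would split the primal objective of \eqref{eq:primalalphaw} into the data fidelity $\Phi(y):=\frac1\sigma\|y-f-w\|_Y^\sigma$ precomposed with $A$ and the regulariser $\alpha\,\mathrm{TV}$, writing the problem as $\inf_{u\in L^q(\Omega)}\Phi(Au)+\alpha\TV{u}$. The two Fenchel conjugates are computed separately. For the fidelity, the shift $z=y-f-w$ reduces the computation to the conjugate of $\frac1\sigma\|\cdot\|_Y^\sigma$; optimising $t\mapsto t\,\|p\|_{Y^\ast}-\frac1\sigma t^\sigma$ over the scaling $t\gs 0$ in the direction realising $\|p\|_{Y^\ast}$ gives the value $\frac1{\sigma'}\|p\|_{Y^\ast}^{\sigma'}$, so that $\Phi^\ast(p)=\scal{p}{f+w}+\frac1{\sigma'}\|p\|_{Y^\ast}^{\sigma'}$. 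For the regulariser, since $\alpha\,\mathrm{TV}$ is positively one-homogeneous, convex, proper and lower semicontinuous for the $(L^q(\Omega),L^{q'}(\Omega))$ pairing, its conjugate is the indicator of the closed convex set $\alpha\,\partial\TV{0}$.

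Assembling these through the duality formula yields the dual $\sup\{\scal{p}{f+w}-\frac1{\sigma'}\|p\|_{Y^\ast}^{\sigma'} : A^\ast p\in\alpha\,\partial\TV{0}\}$, and the stated form \eqref{eq:dualalphaw} is obtained by the reparametrisation $p\mapsto\alpha p$, using $\sigma'-1=1/(\sigma-1)$; the resulting global positive factor does not affect the maximiser. To ensure strong duality and that the supremum is attained (not merely approached), I would invoke the version of the Fenchel--Rockafellar theorem with the continuity qualification: $\Phi$ is finite and continuous on all of $Y$, while $\alpha\,\mathrm{TV}$ is finite at an admissible point such as $u=0$ or $u=\udag$, so the qualification is met and the dual admits a maximiser.

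Uniqueness of the maximiser follows from strict concavity of the dual objective on its feasible set. Since $Y$ is uniformly convex it is reflexive by Milman--Pettis (Proposition \ref{prop:unifconv}), hence so is $Y^\ast$, and the coercive term $-\|p\|_{Y^\ast}^{\sigma'}$ with $\sigma'>1$ confines maximisers to a bounded set; on bounded sets $\|\cdot\|_{Y^\ast}^{\sigma'}$ is uniformly, hence strictly, convex by Proposition \ref{prop:unifconv} applied to the uniformly convex space $Y^\ast$, which makes the objective strictly concave and the maximiser unique. The optimality condition \eqref{eq:dualalphawdiff} would then come from the Fenchel extremality relations of the attained primal--dual pair (the primal minimiser existing by Proposition \ref{prop:linearexistence}): for the unrescaled maximiser $p^\ast$ these give $A^\ast p^\ast\in\alpha\,\partial\TV{u_{\alpha,w}}$, and after the rescaling $p_{\alpha,w}=p^\ast/\alpha$ this reads exactly $v_{\alpha,w}=A^\ast p_{\alpha,w}\in\partial\TV{u_{\alpha,w}}$.

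I expect the main difficulty to lie in the infinite-dimensional bookkeeping rather than in any single estimate: verifying that the continuity qualification of the chosen duality theorem holds so that there is no gap and the dual is attained, confirming that $(\alpha\,\mathrm{TV})^\ast$ is genuinely the indicator of $\alpha\,\partial\TV{0}$ in the $(L^q,L^{q'})$ pairing (which rests on lower semicontinuity of $\mathrm{TV}$ in the appropriate topology), and checking that the extremality relations transfer cleanly through the reparametrisation $p\mapsto\alpha p$. By contrast, the conjugate computation and the strict-convexity argument for uniqueness are routine once the uniform convexity of $Y^\ast$ from Proposition \ref{prop:unifconv} is available.
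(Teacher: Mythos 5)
Your proof is correct, and its skeleton is the same as the paper's: Fenchel duality applied to the splitting of \eqref{eq:primalalphaw} into a shifted fidelity composed with $A$ plus the total variation, the conjugate of the fidelity being $\scal{p}{f+w}_{(Y^\ast,Y)}$ plus a power of the dual norm, the conjugate of the regularizer being the indicator of a subdifferential at $0$ (the paper cites \cite[Thm.\ 1]{IglMerSch18} for this), strong duality and dual attainment from a qualification condition (the paper invokes \cite[Thm.\ 4.4.3]{BorZhu05}), uniqueness from strict convexity, and \eqref{eq:dualalphawdiff} from the extremality relations. Two execution differences are worth recording. First, the paper absorbs $\alpha$ into the fidelity, working with $G(g)=\frac{1}{\sigma\alpha}\|g-(f+w)\|_Y^\sigma$ and plain $\mathrm{TV}$ as regularizer, so that \eqref{eq:dualalphaw} appears directly; you instead keep $\alpha\,\mathrm{TV}$ and recover \eqref{eq:dualalphaw} by the substitution $p\mapsto\alpha p$, and your bookkeeping ($\sigma'-1=1/(\sigma-1)$, $\partial(\alpha\,\mathrm{TV})(u)=\alpha\,\partial\TV{u}$, extremality transported through the rescaling) is consistent. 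Second, and more substantively, the paper computes the fidelity conjugate via G\^{a}teaux differentiability of the squared norm and the duality mapping $j$, solving explicitly for the maximizing point $g_0$; this leans on the assumptions on $Y^\ast$ collected in Proposition \ref{prop:unifconv}. Your computation, reducing the supremum to the one-dimensional Young optimization $\sup_{t\gs 0}\left(t\|p\|_{Y^\ast}-t^\sigma/\sigma\right)=\frac{1}{\sigma'}\|p\|_{Y^\ast}^{\sigma'}$, is more elementary and valid in an arbitrary Banach space; in fact you do not even need a norming direction to be attained, since for each fixed $t$ the supremum over the unit sphere equals $t\|p\|_{Y^\ast}$ by definition of the dual norm. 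What the paper's longer route buys is the explicit primal--dual relation through $j^{-1}$, a structure it reuses in Propositions \ref{prop:shiftingV} and \ref{prop:genproj}. Finally, your uniqueness argument (coercivity confines maximizers to a bounded set, on which $\|\cdot\|_{Y^\ast}^{\sigma'}$ is uniformly, hence strictly, convex) is correct but roundabout: strict convexity of $\|\cdot\|_{Y^\ast}^{\sigma'}$ already follows globally from strict convexity of $Y^\ast$, which is how the paper phrases it.
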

\begin{proof}By the assumptions on $Y$, we have that the duality mapping $j$ defined in \eqref{eq:dualitymap} is single valued and invertible with inverse the duality mapping of $Y^\ast$, and that the map $\frac{1}{2}\|\cdot\|^2_Y$ is G\^{a}teaux differentiable with derivative $j$. Defining the functional 
\begin{equation*}
\begin{aligned}
G: Y &\to \R \\
g &\mapsto \frac{1}{\sigma \alpha}\|g-(f+w)\|^\sigma_Y,
\end{aligned}
\end{equation*}
its conjugate is
\[G^\ast(p)=\sup_{g \in Y} \scal{p}{g}_{(Y^\ast, Y)}-\frac{1}{\sigma \alpha}\|g-(f+w)\|^\sigma_Y,\]
and by the G\^{a}teaux differentiability we may take a directional derivative in direction $h \in Y$ to find that at a purported maximum point $g_0$,
\[\scal{p}{h}_{(Y^\ast, Y)} -\frac{1}{\alpha}\|g_0-(f+w)\|^{\sigma-2}_Y \scal{j\big(g_0-(f+w)\big)}{h} = 0,\]
or, since $h$ was arbitrary,
\[p=\frac{1}{\alpha}\|g_0-(f+w)\|^{\sigma-2}_Y j\big(g_0-(f+w)\big),\]
from which we get, computing norms on both sides and taking into account 
\[\|j(g_0 - (f+w))\|_{Y^\ast}=\|g_0 - (f+w)\|_{Y},\] that
\[p=\frac{1}{\alpha}\big(\alpha \|p\|_{Y^\ast}\big)^{\frac{\sigma-2}{\sigma-1}}j\big(g_0-(f+w)\big),\]
and inverting $j$ we end up with
\[g_0 = (f+w) + \alpha^{\frac{1}{\sigma-1}} \|p\|_{Y^\ast}^{\frac{2-\sigma}{\sigma-1}}j^{-1}(p).\]
Since $Y$ is assumed uniformly convex, the function to be maximized was strictly concave and differentiable and $g_0$ provides the only solution. With it we can compute, taking into account that $\|j^{-1}(p)\|_Y = \|p\|_{Y^\ast}$ and $\scal{p}{j^{-1}(p)}_{(Y^\ast, Y)}=\|p\|_{Y^\ast}^2$,
\[\scal{p}{g_0 - (f+w)}_{(Y^\ast, Y)}=\alpha^{\frac{1}{\sigma-1}} \|p\|_{Y^\ast}^{\frac{2-\sigma}{\sigma-1}+2}=\alpha^{\frac{1}{\sigma-1}} \|p\|_{Y^\ast}^{\sigma'},\]
\begin{equation*}\begin{aligned}\frac{1}{\sigma\alpha}\|g_0-(f+w)\|^\sigma_Y&=\frac{1}{\sigma \alpha}\left\|\alpha^{\frac{1}{\sigma-1}}\|p\|_{Y^\ast}^{\frac{2-\sigma}{\sigma-1}} j^{-1}(p)\right\|_{Y}^\sigma\\
&=\frac{1}{\sigma}\alpha^{\frac{\sigma}{\sigma-1}-1}\left|\|p\|_{Y^\ast}^{\frac{2-\sigma}{\sigma-1}+1}\right|^\sigma\\
&=\frac{1}{\sigma}\alpha^{\frac{1}{\sigma-1}}\|p\|_{Y^\ast}^{\sigma'},
\end{aligned}\end{equation*}
so that finally
\[G^\ast(p)=\scal{p}{f+w}_{(Y^\ast, Y)} + \alpha^{\frac{1}{\sigma-1}}\left(1-\frac{1}{\sigma}\right)\|p\|_{Y^\ast}^{\sigma'}.\]
The rest follows by Fenchel duality in a general pair of Banach spaces \cite[Thm.\ 4.4.3, p.\ 136]{BorZhu05} applied to the choices (in their notation) $X=L^q(\Omega)$ and $Y$, $f(\cdot)=\TV{\cdot}$, $g=G$ and $A$ as above. The functional $\mathrm{TV}^\ast$ is computed in \cite[Thm.\ 1]{IglMerSch18}, resulting in the indicator function of $\partial \TV{0}$. Uniqueness holds because by the assumptions on $Y^\ast$, we have that $\|\cdot\|_{Y^\ast}^{\sigma'}$ is strictly convex.
\end{proof}

Following the scheme laid out in \cite{ChaDuvPeyPoo17, IglMerSch18} for the convergence of level-sets, we now prove strong convergence of the dual maximizers corresponding to noiseless data. It relies on the following source condition:
\begin{equation}\label{eq:sourceCond}
\mathcal{R}(A^\ast) \cap \partial \TV{u^\dag} \neq \emptyset,
\end{equation}
where $\mathcal R(A^\ast)$ denotes the range of the adjoint operator $A^\ast.$
\begin{prop}\label{prop:dualconv}
Assume that the source condition \eqref{eq:sourceCond} holds. Then there is a unique maximizer $p_{0,0}$ of the problem
\[ \sup_{A^\ast p \in \partial \TV{0}} \scal{p}{f}_{(Y^\ast, Y)}\] and with minimal $Y^\ast$ norm. Furthermore, in the absence of noise ($w=0$) the sequence $p_{\alpha,0}$ of maximizers of the dual problem \eqref{eq:dualalphaw} converges strongly in $Y^\ast$ to it.
\end{prop}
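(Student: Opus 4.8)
The plan is to follow the standard regularization-convergence scheme, where the only genuinely functional-analytic input is the Radon--Riesz property of $Y^\ast$ used at the very end to pass from weak to strong convergence.

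First I would identify the value and the maximizer set of the limiting problem. Writing $f=A\udag$, for any feasible $p$ (meaning $A^\ast p\in\partial\TV{0}$) the one-homogeneity of the total variation gives $\scal{p}{f}_{(Y^\ast,Y)}=\scal{A^\ast p}{\udag}_{(L^{q'}(\Omega),L^q(\Omega))}\ls\TV{\udag}$, since every $v\in\partial\TV{0}$ satisfies $\scal{v}{u}\ls\TV{u}$ for all $u$. Moreover, again by one-homogeneity, $v\in\partial\TV{\udag}$ if and only if $v\in\partial\TV{0}$ and $\scal{v}{\udag}=\TV{\udag}$, so the inequality above is an equality exactly when $A^\ast p\in\partial\TV{\udag}$. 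The source condition \eqref{eq:sourceCond} provides at least one such $p$, so the supremum of the limiting problem equals $\TV{\udag}$ and its maximizer set is exactly $M:=\set{p\in Y^\ast\mid A^\ast p\in\partial\TV{\udag}}$. This $M$ is nonempty, convex, and strongly (hence weakly) closed, being the preimage of the closed convex set $\partial\TV{\udag}$ under the bounded operator $A^\ast$. Since $Y^\ast$ is uniformly convex, it is reflexive and strictly convex, so minimizing the weakly lower semicontinuous norm over $M$ produces a unique minimal-norm element $p_{0,0}$: existence follows from weak compactness of a bounded minimizing sequence in the weakly closed set $M$, and uniqueness from strict convexity of the norm. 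This settles the first assertion.

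Next I would establish an a priori bound on $p_\alpha:=p_{\alpha,0}$, the unique maximizer of \eqref{eq:dualalphaw} with $w=0$ furnished by Proposition \ref{prop:lineardual}. Comparing the objective at $p_\alpha$ with that at the admissible competitor $p_{0,0}$ yields
\[\scal{p_\alpha}{f}_{(Y^\ast,Y)}-\frac{\alpha^{1/(\sigma-1)}}{\sigma'}\norm{p_\alpha}_{Y^\ast}^{\sigma'}\gs\scal{p_{0,0}}{f}_{(Y^\ast,Y)}-\frac{\alpha^{1/(\sigma-1)}}{\sigma'}\norm{p_{0,0}}_{Y^\ast}^{\sigma'},\]
and inserting $\scal{p_\alpha}{f}_{(Y^\ast,Y)}\ls\TV{\udag}=\scal{p_{0,0}}{f}_{(Y^\ast,Y)}$ on the left immediately gives $\norm{p_\alpha}_{Y^\ast}\ls\norm{p_{0,0}}_{Y^\ast}$ for every $\alpha$. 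Thus $(p_\alpha)$ is uniformly bounded, and by reflexivity any sequence $\alpha_n\to0$ admits a subsequence with $p_{\alpha_n}\rightharpoonup\bar p$; the constraint set $\set{A^\ast p\in\partial\TV{0}}$ is weakly closed, so $A^\ast\bar p\in\partial\TV{0}$. Passing to the limit in the optimality inequality---the $\alpha_n^{1/(\sigma-1)}$ terms vanish because the norms stay bounded---gives $\scal{\bar p}{f}_{(Y^\ast,Y)}\gs\TV{\udag}$, while feasibility forces the reverse, so $\bar p\in M$.

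Finally I would identify $\bar p$ and upgrade the convergence. Weak lower semicontinuity of the norm and the a priori bound give $\norm{\bar p}_{Y^\ast}\ls\liminf_n\norm{p_{\alpha_n}}_{Y^\ast}\ls\norm{p_{0,0}}_{Y^\ast}$, whereas minimality of $p_{0,0}$ over $M$ gives the opposite inequality; hence $\norm{\bar p}_{Y^\ast}=\norm{p_{0,0}}_{Y^\ast}$ and, by uniqueness of the minimal-norm maximizer, $\bar p=p_{0,0}$. The resulting chain $\limsup_n\norm{p_{\alpha_n}}_{Y^\ast}\ls\norm{\bar p}_{Y^\ast}\ls\liminf_n\norm{p_{\alpha_n}}_{Y^\ast}$ shows $\norm{p_{\alpha_n}}_{Y^\ast}\to\norm{\bar p}_{Y^\ast}$, so the Radon--Riesz property of $Y^\ast$ (valid since $Y^\ast$ is uniformly, hence locally uniformly, convex, by Proposition \ref{prop:unifconv}) turns the weak convergence into strong convergence. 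As every sequence $\alpha_n\to0$ has a subsequence converging strongly to the same limit $p_{0,0}$, the whole family converges, $p_{\alpha,0}\to p_{0,0}$ strongly in $Y^\ast$. The steps needing the most care are the a priori bound combined with the identification of the weak limit as a maximizer; the concluding Radon--Riesz argument, though it is precisely where uniform convexity of $Y^\ast$ is indispensable, is then routine.
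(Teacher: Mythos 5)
Your proposal is correct and follows essentially the same route as the paper's proof: the a priori bound $\norm{p_{\alpha,0}}_{Y^\ast}\ls\norm{p_{0,0}}_{Y^\ast}$ obtained by comparing the two optimality inequalities, extraction of a weak limit by reflexivity, identification of that limit as the unique minimal-norm maximizer via weak lower semicontinuity and strict convexity, and the Radon--Riesz upgrade from weak to strong convergence. The only (minor) difference is that where the paper cites \cite{IglMerSch18} for the existence of $p_{0,0}$, you prove it directly by identifying the maximizer set as $\set{p \mid A^\ast p \in \partial \TV{\udag}}$ using the source condition and one-homogeneity, which is a sound, self-contained replacement for that citation.
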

\begin{proof}
The existence of $p_{0,0}$ follows along the same steps as the Hilbert space case treated in \cite[Lem.\ 2]{IglMerSch18}, while uniqueness is a consequence of the strict convexity of $Y^\ast$ (and therefore of powers of its norm).

By optimality in their corresponding maximization problems, we have 
\begin{equation}\label{eq:comp1}\scal{p_{\alpha, 0}}{f}_{(Y^\ast, Y)}-\frac{\alpha^{\frac{1}{\sigma-1}}}{\sigma'}\|p_{\alpha,0}\|_{Y^\ast}^{\sigma'} \gs \scal{p_{0, 0}}{f}_{(Y^\ast, Y)}-\frac{\alpha^{\frac{1}{\sigma-1}}}{\sigma'}\|p_{0,0}\|_{Y^\ast}^{\sigma'},\end{equation}
and
\begin{equation}\label{eq:comp2}\scal{p_{0, 0}}{f}_{(Y^\ast, Y)} \gs \scal{p_{\alpha, 0}}{f}_{(Y^\ast, Y)}.\end{equation}
Summing these inequalities we obtain $\|p_{\alpha,0}\|_{Y^\ast} \ls \|p_{0,0}\|_{Y^\ast}$. Since $Y^\ast$ is uniformly convex, it is also reflexive and the sequence $p_{\alpha,0}$ can be assumed \cite[Cor.\ 3.30]{Bre11} (up to taking a subsequence) to converge weakly in $Y^\ast$ to some limit $p^\ast$. Furthermore $A^\ast p^\ast \in \partial \TV{0}$ by weak closedness of subgradients in Banach spaces \cite[Cor.\ I.5.1, p.\ 21]{EkeTem99}. Passing to the limit in both inequalities we obtain
\[\scal{p^\ast}{f}_{(Y^\ast, Y)} = \scal{p_{0, 0}}{f}_{(Y^\ast, Y)},\]
so that $p^\ast$ is a maximizer of $p \mapsto \scal{p}{f}_{(Y^\ast, Y)}$ over $p$ such that $A^\ast p \in \partial \TV{0}$. Using \eqref{eq:comp1} and weak lower semicontinuity of the norm we get that
\begin{equation}\label{eq:lscnorm}\|p^\ast\|_{Y^\ast} \ls \lim\inf \|p_{\alpha, 0}\|_{Y^\ast} \ls \|p_{0, 0}\|_{Y^\ast}.\end{equation}
This implies that $p^\ast$ is of $Y^\ast$ minimal norm, and since $\|\cdot\|_{Y^\ast}^{\sigma'}$ is strictly convex, such a minimizer is unique and we must have $p^\ast=p_{0,0}$ and the whole sequence $p_{\alpha,0}$ converging to it. Moreover, since $Y^\ast$ has the Radon-Riesz property, \eqref{eq:lscnorm} implies that the convergence is in fact strong in $Y^\ast$. 
\end{proof}

In the sequel, we will need stability estimates for solutions of the dual problem \eqref{eq:dualalphaw}, so that $p_{\alpha,w}$ can be related to $p_{\alpha,0}$, which was just proved to converge strongly. In the simple case where $\sigma=2$ and $Y$ is a Hilbert space $H$, the maximization to be performed corresponds to 
\begin{equation}\sup_{\substack{p \in H \\ A^\ast p \in \partial \TV{0}}} 2\scal{p}{\frac{f+w}{\alpha}}_{H}-\|p\|_{H}^{2},\end{equation}
which after adding the constant term $-\|(f+w) / \alpha\|^2_H$ has the same maximizers as the problem
\begin{equation}\sup_{\substack{p \in H \\ A^\ast p \in \partial \TV{0}}} -\left\|\frac{f+w}{\alpha}\right\|^2_H + 2\scal{p}{\frac{f+w}{\alpha}}_{H}-\|p\|_{H}^{2}=- \inf_{\substack{p \in H \\ A^\ast p \in \partial \TV{0}}} \left\|p - \frac{f+w}{\alpha}\right\|^2_H,\end{equation}
which is solved by computing the projection of $(f+w)/\alpha$ onto the convex set 
\[\left\{p \in H \,\middle|\, A^\ast p \in \partial \TV{0}\right\}.\]
Convexity of the set implies that this projection is nonexpansive, providing a straightforward stability estimate for this case.

In analogy with the Hilbert framework, we can define the functional
\begin{equation}\label{eq:V}V(p,g):=\frac{1}{\sigma'}\|p\|^{\sigma'}_{Y^\ast}- \scal{p}{g}_{(Y^\ast, Y)} + \frac{1}{\sigma}\|g\|_Y^{\sigma},\end{equation}
which in the case $\sigma=2$ is used in \cite{Alb96} to define a generalized projection for Banach spaces, mapping the dual space $Y^\ast$ onto $Y$. In the following we use the methods introduced in \cite{Alb96,AlbNot95} to derive the estimates we require.

\begin{prop}\label{prop:shiftingV}
For $g \in Y$ and any weak-* closed and convex set $K \subset Y^\ast$ The problem
\begin{equation}\inf_{p \in K} V(p,g)\end{equation}
has a unique solution, which we denote by $\pi_K(g)$. Furthermore, it satisfies
\begin{equation}\label{eq:projident}\scal{\pi_K(g)-q}{g-\|\pi_K(g)\|^{\sigma'-2}_{Y^\ast}\,j^{-1}(\pi_K(g))}_{(Y^\ast, Y)}\gs 0 \text{ for each }q \in K.\end{equation}
\end{prop}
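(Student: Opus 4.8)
The plan is to treat the two assertions separately: first the existence and uniqueness of $\pi_K(g)$ by the direct method, and then the inequality \eqref{eq:projident} as the first-order optimality condition of the constrained minimization of $V(\cdot,g)$ over $K$.

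For existence and uniqueness, I would first note that $V(\cdot,g)$ is strictly convex, since the term $\frac{1}{\sigma'}\|\cdot\|^{\sigma'}_{Y^\ast}$ is strictly convex because $Y^\ast$ is uniformly convex and $\sigma'>1$ (Proposition \ref{prop:unifconv}), while the remaining terms are affine in $p$. Coercivity follows from $\abs{\scal{p}{g}_{(Y^\ast,Y)}}\ls \|p\|_{Y^\ast}\|g\|_Y$, which gives $V(p,g)\gs \frac{1}{\sigma'}\|p\|^{\sigma'}_{Y^\ast}-\|p\|_{Y^\ast}\|g\|_Y+\frac1\sigma\|g\|^\sigma_Y\to+\infty$ as $\|p\|_{Y^\ast}\to\infty$, using $\sigma'>1$. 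I then take a minimizing sequence $p_n\in K$, which is bounded by coercivity; since $Y^\ast$ is uniformly convex it is reflexive by Milman--Pettis, so a subsequence converges weakly to some $p^\ast$. Because $K$ is convex and weak-$\ast$ closed and reflexivity makes the weak and weak-$\ast$ topologies on $Y^\ast$ coincide (here $Y^{\ast\ast}=Y$), $K$ is weakly closed, so $p^\ast\in K$. Weak lower semicontinuity of $V(\cdot,g)$ (convex and norm-continuous, hence weakly lsc, with the linear part weakly continuous) shows $p^\ast$ is a minimizer, and strict convexity makes it unique; we set $\pi_K(g):=p^\ast$.

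For the inequality, the key step is to compute the G\^ateaux derivative of $V(\cdot,g)$. Writing $\frac{1}{\sigma'}\|p\|^{\sigma'}_{Y^\ast}=\frac{1}{\sigma'}\big(\|p\|^2_{Y^\ast}\big)^{\sigma'/2}$ and applying the chain rule together with the differentiability of $\frac12\|\cdot\|^2_{Y^\ast}$ — whose derivative is the duality mapping $j^{-1}$ of $Y^\ast$, available from Proposition \ref{prop:unifconv} since $Y$, hence $Y^{\ast\ast}$, is strictly convex — I obtain that for $p\neq 0$ the derivative of $p\mapsto\frac{1}{\sigma'}\|p\|^{\sigma'}_{Y^\ast}$ equals $\|p\|^{\sigma'-2}_{Y^\ast}\,j^{-1}(p)\in Y^{\ast\ast}=Y$. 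The derivative of the affine part $p\mapsto -\scal{p}{g}_{(Y^\ast,Y)}$ is $-g$, whence
\[\nabla_p V(p,g)=\|p\|^{\sigma'-2}_{Y^\ast}\,j^{-1}(p)-g\in Y.\]
At $p=0$ one checks directly that the derivative of the norm power vanishes, since $\|th\|^{\sigma'}_{Y^\ast}/t\to 0$ as $t\to 0^+$ because $\sigma'>1$; this is consistent with the convention $\|0\|^{\sigma'-2}_{Y^\ast}\,j^{-1}(0)=0$, so the formula holds for every $p$.

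Finally I would exploit convexity of $K$: for $p^\ast=\pi_K(g)$, any $q\in K$ and $t\in(0,1]$ the point $p^\ast+t(q-p^\ast)$ lies in $K$, so minimality gives $V(p^\ast+t(q-p^\ast),g)\gs V(p^\ast,g)$. Dividing by $t$ and letting $t\to 0^+$ produces the one-sided directional derivative inequality
\[\scal{q-\pi_K(g)}{\,\|\pi_K(g)\|^{\sigma'-2}_{Y^\ast}\,j^{-1}(\pi_K(g))-g\,}_{(Y^\ast, Y)}\gs 0,\]
which upon reversing the sign of both slots of the pairing is exactly \eqref{eq:projident}. The main obstacle I anticipate is the derivative computation: justifying the chain rule for the composed norm power, correctly identifying the gradient as an element of $Y=Y^{\ast\ast}$ paired against increments $q-p^\ast\in Y^\ast$, and handling the degenerate case $p=0$ together with the range $1<\sigma'<2$, where $\|p\|^{\sigma'-2}_{Y^\ast}$ blows up but the product with $j^{-1}(p)$ stays well behaved since its norm is $\|p\|^{\sigma'-1}_{Y^\ast}\to 0$.
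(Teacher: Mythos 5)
Your proposal is correct and follows essentially the same route as the paper: existence via weak compactness of bounded sets in $Y^\ast$ (Banach--Alaoglu/reflexivity) together with closedness and convexity of $K$, uniqueness from strict convexity of $\|\cdot\|_{Y^\ast}^{\sigma'}$, and the variational inequality \eqref{eq:projident} obtained by taking the one-sided directional derivative of $V(\cdot,g)$ at the minimizer in the direction $q-\pi_K(g)$, using that the G\^ateaux derivative of $\frac{1}{2}\|\cdot\|^2_{Y^\ast}$ is the duality mapping $j^{-1}$ from Proposition \ref{prop:unifconv}. The extra details you supply (coercivity, the chain rule for the norm power, and the degenerate case $p=0$) are sound refinements of the same argument.
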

\begin{proof}Existence follows by the Banach-Alaoglu theorem and closedness, while uniqueness is a consequence of the strict convexity of the function $\|\cdot\|^{\sigma'}_{Y^\ast}$.

For the second part, we have that 
\begin{equation}V(\pi_K(g),g)=\min_{p \in K}V(p,g),\end{equation}
and since we have G\^{a}teaux differentiability of the squared dual norm $\Vert \cdot \Vert^{2}_{Y^\ast}$ and that the duality mapping of $Y ^\ast$ is $j^{-1}$ by Proposition \ref{prop:unifconv}, we can differentiate $V$ at $(\pi_K(g),g)$ in its first argument in direction $q-\pi_K(g) \in Y^\ast$, to obtain
\begin{equation}\scal{q-\pi_K(g)}{\|\pi_K(g)\|^{\sigma'-2}_{Y^\ast}\,j^{-1}(\pi_K(g))}_{(Y^\ast, Y)}-\scal{q-\pi_K(g)}{g}_{(Y^\ast, Y)}\gs 0,\end{equation}
from which \eqref{eq:projident} follows directly.
\end{proof}

Since we have assumed that $Y^\ast$ is uniformly convex with modulus of uniform convexity of power type $\sigma'$, we have by Proposition \ref{prop:unifconv} that $\|\cdot\|^{\sigma'}_{Y^\ast}$ is also globally uniformly convex. This allows us to formulate stability estimates for the generalized projection:

\begin{prop}\label{prop:genproj}
We have the estimate:
\begin{equation}\label{eq:genprojest}
\|\pi_K(g_1)-\pi_K(g_2)\|_{Y^\ast} \ls \rho_{Y,\sigma}\left(\frac{1}{2}\|g_1 - g_2\|_Y\right),
\end{equation}
where $\rho_{Y,\sigma}$ is defined as the inverse of the function
\begin{equation}t \mapsto \frac{\delta_{\|\cdot\|^{\sigma'}_{Y^\ast}/\sigma'}(t)}{t},\end{equation}
 where $\delta_{\|\cdot\|^{\sigma'}_{Y^\ast}/\sigma'}$ is the modulus of uniform convexity of the functional $\|\cdot\|^{\sigma'}_{Y^\ast}/\sigma'$. In consequence, the solutions of \eqref{eq:dualalphaw} satisfy
 \begin{equation}\label{eq:parameterest}
\|p_{\alpha, w}-p_{\alpha, 0}\|_{Y^\ast} \ls \rho_{Y,\sigma}\left(\frac{\|w\|_Y}{2\alpha^{\frac{1}{\sigma-1}}}\right),
\end{equation}
\end{prop}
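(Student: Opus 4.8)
The plan is to first establish the abstract estimate \eqref{eq:genprojest} for the generalized projection and then obtain \eqref{eq:parameterest} by recognizing the dual maximizers $p_{\alpha,w}$ as such projections. Throughout I abbreviate $\Phi := \frac{1}{\sigma'}\|\cdot\|^{\sigma'}_{Y^\ast}$ and note that, since uniform convexity of $Y$ implies reflexivity, the Gâteaux derivative $\nabla\Phi(p)=\|p\|^{\sigma'-2}_{Y^\ast}\,j^{-1}(p)$ takes values in $Y^{\ast\ast}=Y$; this is precisely the quantity already appearing inside the pairing of the variational inequality \eqref{eq:projident}, so no new derivative computation is needed.

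For the first part, I would set $p_i := \pi_K(g_i)$ and apply \eqref{eq:projident} twice: once for $g_1$ tested against $q=p_2$, once for $g_2$ tested against $q=p_1$. Adding the two inequalities, the linear terms recombine into
\[
\scal{p_1-p_2}{\nabla\Phi(p_1)-\nabla\Phi(p_2)}_{(Y^\ast,Y)} \ls \scal{p_1-p_2}{g_1-g_2}_{(Y^\ast,Y)}.
\]
The left-hand side is bounded below by $2\delta_\Phi(\|p_1-p_2\|_{Y^\ast})$ via Lemma \ref{lem:unifmonoton}, applied to $\Phi$ viewed as a globally uniformly convex function on $Y^\ast$ (global uniform convexity with the stated modulus being furnished by Proposition \ref{prop:unifconv} and the power-type assumption on $Y^\ast$). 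The right-hand side is bounded above by $\|p_1-p_2\|_{Y^\ast}\,\|g_1-g_2\|_Y$. Dividing through by $\|p_1-p_2\|_{Y^\ast}$ (the estimate being trivial when it vanishes) gives
\[
\frac{\delta_\Phi(\|p_1-p_2\|_{Y^\ast})}{\|p_1-p_2\|_{Y^\ast}} \ls \tfrac12\,\|g_1-g_2\|_Y,
\]
and applying the inverse $\rho_{Y,\sigma}$ of $t\mapsto \delta_\Phi(t)/t$ yields \eqref{eq:genprojest}.

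For the second part, I would recast \eqref{eq:dualalphaw} as a minimization over $K=\{p\in Y^\ast : A^\ast p\in\partial\TV{0}\}$: dropping the sign and dividing by the positive constant $\alpha^{\frac{1}{\sigma-1}}$, maximizing the dual objective is equivalent to minimizing $\frac{1}{\sigma'}\|p\|^{\sigma'}_{Y^\ast}-\scal{p}{(f+w)/\alpha^{\frac{1}{\sigma-1}}}_{(Y^\ast,Y)}$, which up to a $p$-independent term is exactly $V\!\big(p,(f+w)/\alpha^{\frac{1}{\sigma-1}}\big)$. Hence $p_{\alpha,w}=\pi_K\big((f+w)/\alpha^{\frac{1}{\sigma-1}}\big)$ and $p_{\alpha,0}=\pi_K\big(f/\alpha^{\frac{1}{\sigma-1}}\big)$, and inserting these into \eqref{eq:genprojest}, whose two arguments differ by $w/\alpha^{\frac{1}{\sigma-1}}$, produces \eqref{eq:parameterest} immediately.

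The step I expect to be most delicate is the final inversion: turning the inequality on $\delta_\Phi(t)/t$ into the bound \eqref{eq:genprojest} requires $t\mapsto\delta_\Phi(t)/t$ to be increasing so that $\rho_{Y,\sigma}$ is a well-defined monotone inverse. This is exactly where the power-type assumption on the modulus of $Y^\ast$ is indispensable, as it forces $\delta_\Phi(t)/t \gs c\,t^{\sigma'-1}$ and excludes degeneracy near the origin. Two subsidiary points to check are that $K$ is weak-$*$ closed and convex so that Proposition \ref{prop:shiftingV} applies (convexity is clear; weak-$*$ closedness follows from closedness of $\partial\TV{0}$ and continuity of $A^\ast$), and that the reflexivity identifications are handled correctly when invoking Lemma \ref{lem:unifmonoton} on $Y^\ast$ in place of $Y$.
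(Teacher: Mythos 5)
Your proposal is correct and follows essentially the same route as the paper: the paper likewise applies the variational inequality \eqref{eq:projident} twice (written there as a three-term decomposition of the monotonicity pairing, algebraically identical to your summing of the two inequalities), bounds that pairing below by $2\delta_\Phi\left(\|\pi_K(g_1)-\pi_K(g_2)\|_{Y^\ast}\right)$ via Lemma \ref{lem:unifmonoton}, and obtains \eqref{eq:parameterest} by identifying $p_{\alpha,w}=\pi_K\big((f+w)/\alpha^{1/(\sigma-1)}\big)$ after dividing the dual problem by $\alpha^{1/(\sigma-1)}$. One minor imprecision: the power-type bound $\delta_\Phi(t)/t\gs c\,t^{\sigma'-1}$ does not by itself make $t\mapsto\delta_\Phi(t)/t$ increasing; the paper instead invokes the general property $\delta_\Phi(ct)>c^2\delta_\Phi(t)$ for $c>1$ of moduli of uniform convexity to get strict monotonicity, the power-type assumption being what guarantees global uniform convexity of $\|\cdot\|_{Y^\ast}^{\sigma'}/\sigma'$ in the first place, as you correctly note.
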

\begin{proof}
We denote $\phi(p)=\|p\|^{\sigma'}_{Y^\ast}/\sigma'$, so that $\phi$ is G\^{a}teaux differentiable with derivative \[d \phi(p)=\|\pi_K(p)\|^{\sigma'-2}_{Y^\ast}\,j^{-1}(\pi_K(p)).\] We compute
\begin{equation}
\begin{aligned}
&\scal{\pi_K(g_1)-\pi_K(g_2)}{d\phi(\pi_K(g_1))- d\phi(\pi_K(g_2))}_{(Y^\ast, Y)}\\
&\quad=\scal{\pi_K(g_1)-\pi_K(g_2)}{d\phi(\pi_K(g_1))-g_1}_{(Y^\ast, Y)} \\
&\qquad-\scal{\pi_K(g_1)-\pi_K(g_2)}{d\phi(\pi_K(g_2))-g_2}_{(Y^\ast, Y)} \\
&\qquad+ \scal{\pi_K(g_1)-\pi_K(g_2)}{g_1 - g_2}_{(Y^\ast, Y)}\\
&\quad\ls \scal{\pi_K(g_1)-\pi_K(g_2)}{g_1 - g_2}_{(Y^\ast, Y)} \\ 
&\quad\ls \|\pi_K(g_1) - \pi_K(g_2)\|_{Y^\ast} \|g_1 - g_2\|_Y,
\end{aligned}
\end{equation}
where we have used Proposition \ref{prop:shiftingV} twice and the Cauchy-Schwarz inequality. On the other hand, Lemma \ref{lem:unifmonoton} provides us with 
\begin{equation}
\scal{\pi_K(g_1)-\pi_K(g_2)}{d\phi(\pi_K(g_1))- d\phi(\pi_K(g_2))}_{(Y^\ast, Y)}\gs 2 \delta_{\phi}\left(\|\pi_K(g_1)-\pi_K(g_2)\|_{Y^\ast}\right),
\end{equation}
which combined with the above delivers \eqref{eq:genprojest}. Note that the inverse function $\rho_{Y,\sigma}$ is well defined, since the property $\delta_\phi(ct)>c^2\delta_\phi(t)$ for all $c>1$ \cite[Fact 5.3.16]{BorVan10} implies that $t \mapsto\delta_{\phi}(t)/t$ is strictly increasing.

Now, we notice that we can divide by $\alpha^{1/(\sigma-1)}$ in the problem \eqref{eq:dualalphaw}, to obtain the equivalent problem
\[\sup_{\substack{p \in Y^\ast \\ A^\ast p \in \partial \TV{0}}} \scal{p}{\frac{f+w}{\alpha^{\frac{1}{\sigma-1}}}}_{(Y^\ast, Y)}-\frac{1}{\sigma'}\|p\|_{Y^\ast}^{\sigma'},\]
which in turn has the same solutions as 
\[\inf_{\substack{p \in Y^\ast \\ p \in \partial \TV{0}}} V(p, \alpha^{-\frac{1}{\sigma-1}} (f+w)).\]
Using \eqref{eq:genprojest} with $g_1-g_2=\alpha^{-1/(\sigma-1)}w$, we get the expected estimate \eqref{eq:parameterest}.
\end{proof}

\begin{remark}\label{rem:hilbert}A straightforward computation shows that in the case $\sigma'=2$ and $Y=H$ a Hilbert space, we have for any $u,v \in H$
\[\left\|\frac{1}{2}(u+v)\right\|^2_H=\frac{1}{2}(\|u\|^2_H + \|v\|^2_H)-\frac{1}{4}\|u-v\|^2_H,\]
so that the best modulus of convexity of $\|\cdot\|^{2}_{H}/2$ is the function defined by $\delta_{\|\cdot\|^{2}_{H}/2}(t)=t^2/2$ and $\rho_{H,2}(t/2)=t$, recovering that the projection is nonexpansive, as used in \cite{IglMerSch18}.
\end{remark}

\section{Convergence of level-sets with assumed compact support}\label{sec:lsConv}
Our next goal is to relate the convergence of the sequence $p_{\alpha,w}$ with that of the level-sets. For the sake of clarity we assume throughout the section that the minimizers considered have a common compact support, and the possibility to lift this assumption will be discussed in Section \ref{sec:infinity}. We start by recalling some known properties of the subgradient of the total variation, which allow us to interpret the optimality condition \eqref{eq:dualalphawdiff} in terms of the level-sets of $u_{\alpha,w}$.
\begin{prop}
\label{prop:subgradTV}
 Let $u \in L^q(\R^d)$. Then, the following assertions are equivalent
 \begin{enumerate}
  \item\label{i:1} $v \in \partial \TV{u}$,
  \item\label{i:2} $v \in \partial \TV{0}$ and \[\int u v = \TV{u}\]
  \item\label{i:3} $v \in \partial \TV{0}$ and for a.e. $s$,
  \[\per(\{u>s\}) =\operatorname{sign}(s) \int_{\{u >s\}} v.\]
  \item\label{i:4} Almost every level-set $\{u>s\}$ minimizes
  \[E \mapsto \per(E) - \operatorname{sign}(s) \int_E v.\]
 \end{enumerate} 
\end{prop}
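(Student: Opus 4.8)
The plan is to establish the chain of equivalences by proving \ref{i:1} $\Leftrightarrow$ \ref{i:2}, then \ref{i:2} $\Leftrightarrow$ \ref{i:3} via the coarea formula, and finally \ref{i:3} $\Leftrightarrow$ \ref{i:4} via a minimality argument. For the first equivalence, I recall that $\mathrm{TV}$ is a convex, one-homogeneous functional, so its subgradient $\partial \mathrm{TV}(0)$ is exactly the set of $v$ with $\scal{v}{u}_{(L^{q'},L^q)} \ls \TV{u}$ for all $u$, and the standard characterization of subgradients of sublinear functionals gives that $v \in \partial \TV{u}$ if and only if $v \in \partial \TV{0}$ together with the equality $\scal{v}{u} = \TV{u}$. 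Concretely, $v \in \partial \TV{u}$ means $\TV{w} \gs \TV{u} + \scal{v}{w-u}$ for all $w$; testing with $w=2u$ and $w=0$ and using one-homogeneity $\TV{2u}=2\TV{u}$, $\TV{0}=0$ forces $\scal{v}{u}=\TV{u}$, and this equality together with the sublinear bound $\scal{v}{w} \ls \TV{w}$ (which is exactly $v \in \partial \TV{0}$) recovers the subgradient inequality. This gives \ref{i:1} $\Leftrightarrow$ \ref{i:2}.

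For \ref{i:2} $\Leftrightarrow$ \ref{i:3}, the idea is to apply the coarea formula \eqref{eq:coarea} to the layer-cake decomposition $u = \int_0^\infty 1_{\{u>s\}} \dd s - \int_{-\infty}^0 1_{\{u<s\}} \dd s$. I would write $\scal{v}{u} = \int_{-\infty}^\infty \left( \int_{\{u>s\}} v \right) \operatorname{sign}(s)\, \dd s$ after a Fubini argument splitting positive and negative values, and compare it against $\TV{u} = \int_{-\infty}^\infty \per(\{u>s\})\, \dd s$. Since $v \in \partial \TV{0}$ gives the pointwise (in $s$) inequality $\operatorname{sign}(s)\int_{\{u>s\}} v \ls \per(\{u>s\})$ (testing the sublinear bound on the characteristic function of a superlevel set, whose total variation is its perimeter), the integrated equality $\scal{v}{u}=\TV{u}$ can only hold if the integrands coincide for a.e.\ $s$, which is precisely \ref{i:3}; the converse follows by integrating \ref{i:3} back up.

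The implication \ref{i:3} $\Leftrightarrow$ \ref{i:4} is where the membership $v \in \partial \TV{0}$ does the essential work. For a.e.\ $s$, testing the sublinear bound defining $\partial \TV{0}$ against $1_F$ for an arbitrary finite-perimeter set $F$ gives $\operatorname{sign}(s)\int_F v \ls \per(F)$, i.e.\ $\per(F) - \operatorname{sign}(s)\int_F v \gs 0$; condition \ref{i:3} says this quantity equals zero at $F = \{u>s\}$, so $\{u>s\}$ attains the minimal value $0$ of the functional, giving \ref{i:4}. Conversely, if $\{u>s\}$ minimizes $E \mapsto \per(E) - \operatorname{sign}(s)\int_E v$, comparing with the competitor $E = \emptyset$ (value $0$) forces the minimum to be $\ls 0$, while $v \in \partial \TV{0}$ forces it to be $\gs 0$, hence the equality \ref{i:3}. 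The main obstacle I anticipate is technical rather than conceptual: justifying the Fubini interchange and the passage from an integrated equality to a pointwise-a.e.\ one requires care about integrability of $s \mapsto \int_{\{u>s\}} v$ (which uses $u \in L^q$, $v \in L^{q'}$ and Hölder) and about the null sets of non-generic levels $s$ where $\per(\{u>s\})$ or the layer-cake identity may fail, so I would be careful to phrase everything modulo a Lebesgue-null set of values $s$.
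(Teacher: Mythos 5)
Your plan follows the same route as the paper, which simply delegates: \ref{i:1}$\Leftrightarrow$\ref{i:2} to one-homogeneity and the $(L^q,L^{q'})$ pairing (citing \cite[Lem.\ 10]{IglMerSch18}), and the equivalence with \ref{i:3}, \ref{i:4} to statement \ref{i:2} plus the coarea formula (citing \cite[Prop.\ 3]{ChaDuvPeyPoo17}); your arguments for \ref{i:1}$\Leftrightarrow$\ref{i:2}, for \ref{i:2}$\Leftrightarrow$\ref{i:3}, and for \ref{i:3}$\Rightarrow$\ref{i:4} are correct. There is, however, a genuine gap in the remaining direction \ref{i:4}$\Rightarrow$\ref{i:3}: unlike \ref{i:2} and \ref{i:3}, statement \ref{i:4} does \emph{not} contain the hypothesis $v\in\partial\TV{0}$, yet your converse argument invokes exactly that membership (``$v \in \partial \TV{0}$ forces it to be $\gs 0$''). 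As written, you have only proved that \ref{i:3} is equivalent to ``\ref{i:4} together with $v\in\partial\TV{0}$''; what still needs proof in \ref{i:4}$\Rightarrow$\ref{i:1} is precisely that minimality of almost every level-set, by itself, forces $v\in\partial\TV{0}$.

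This can be repaired, but it needs an extra idea rather than bookkeeping. For $s>0$ the functional $E\mapsto\per(E)-\int_E v$ does not depend on $s$, so its infimum over finite-measure, finite-perimeter competitors is a single number $m$, and $m\ls 0$ (take $E=\emptyset$). By \ref{i:4}, $m=\per(\{u>s\})-\int_{\{u>s\}}v$ for a.e.\ $s>0$. Since $u\in L^q$ gives $|\{u>s\}|\ls s^{-q}\|u\|_{L^q}^q\to 0$ as $s\to\infty$, H\"older gives $\int_{\{u>s\}}|v|\to 0$ along such levels, hence $m\gs -\int_{\{u>s\}}|v| \to 0$; thus $m=0$, which is exactly the bound $\int_E v\ls\per(E)$ for all competitors $E$. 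The symmetric argument as $s\to-\infty$, performed on the finite-measure sets $\{u<s\}$ and the functional $E\mapsto\per(E)+\int_E v$, gives $-\int_E v\ls\per(E)$, and these two bounds combined with your layer-cake computation yield $v\in\partial\TV{0}$; after that your argument closes the loop. Incidentally, this is also where you should make explicit a point you only gesture at: for $s<0$ the set $\{u>s\}$ has infinite measure (its complement has finite measure because $u\in L^q$), so $\int_{\{u>s\}}v$ is not an absolutely convergent integral, and both \ref{i:3} and \ref{i:4} must be read through the complements --- for a.e.\ $s<0$, $\per(\{u<s\})=-\int_{\{u<s\}}v$ and $\{u<s\}$ minimizes $E\mapsto\per(E)+\int_E v$ --- which is exactly the form your Fubini computation produces.
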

\begin{proof}The equivalence between statements \ref{i:1} and \ref{i:2} follows from the $(L^q, L^{q'})$ pairing used and the fact that $\TV{\cdot}$ is one-homogeneous, and a proof can be found in \cite[Lem.\ 10]{IglMerSch18}, for example. The equivalence between statements \ref{i:3}, \ref{i:4} and \ref{i:1} is a consequence of statement \ref{i:2} and the coarea formula, for a proof see \cite[Prop.\ 3]{ChaDuvPeyPoo17}.
\end{proof}

The proof of Hausdorff convergence of level-sets goes along the lines of the proof of Theorem 2 in \cite{IglMerSch18}, and is centered around uniform density estimates for the level-sets, that is, bounds on volume fractions of the type \[\frac{|\{u_{\alpha,w} >s\} \cap \Br|}{|\Br|}\gs C, \text{ for }x \in \partial\{u_{\alpha,w} >s\}\text{ and }r \text{ small},\] 
the uniformity referring to the fact that the constant in the right hand side should be independent of $\alpha$ and $w$, as long as they are related by a suitable parameter choice.

The first ingredient for such density estimates is the following comparison formula for intersections with balls, whose proof can be found, for example, in \cite[Lemma 3]{IglMerSch18}.  Remembering that $v_{\alpha,w} = A^\ast p_{\alpha,w}$, this formula applies to the level sets $\{u_{\alpha,w} >s\}$ by the last item of Proposition \ref{prop:subgradTV}.

\begin{lemma}\label{lem:compareB}
Let $E$ minimize the functional $F \mapsto \per(F) - \int_F v_{\alpha,w}$. Then for any $x$ and almost every $r$ we have 
\begin{equation} \per(E \cap \Br) - \int_{E\cap \Br} v_{\alpha,w} \ls 2 \per(\Br\, ;\, E^{(1)}).\label{eq:compareGen} \end{equation}
\end{lemma}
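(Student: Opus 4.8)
The plan is to exploit the minimality of $E$ against the competitor $E \setminus \Br$, which replaces the part of $E$ inside the ball by nothing, and to control the resulting perimeter terms by the geometry of the reduced boundary relative to the sphere $\partial \Br$. Concretely, since $E$ minimizes $F \mapsto \per(F) - \int_F v_{\alpha,w}$, I would first write the optimality inequality
\begin{equation}
\per(E) - \int_E v_{\alpha,w} \ls \per(E \setminus \Br) - \int_{E \setminus \Br} v_{\alpha,w},
\end{equation}
so that after cancelling the common integral over $E \setminus \Br$ we obtain
\begin{equation}
\per(E) - \per(E \setminus \Br) \ls \int_{E \cap \Br} v_{\alpha,w}.
\end{equation}
The heart of the argument is then a decomposition of perimeters of the pieces $E \cap \Br$ and $E \setminus \Br$ in terms of the perimeter of $E$ restricted to the ball, of $E$ restricted to the exterior, and of the contributions coming from the sphere $\partial \Br$.

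Next I would use the standard fact that for almost every $r$ the set $E$ and the ball $\Br$ interact nicely (the sphere $\partial \Br$ is $\mathcal{H}^{d-1}$-transversal to the reduced boundary of $E$ and carries no part of $\partial^\ast E$), so that one has the splitting identities
\begin{equation}
\per(E \cap \Br) = \per(E\, ; \, \Br) + \per(\Br\, ; \, E^{(1)})
\end{equation}
together with the analogous identity for $E \setminus \Br$, namely $\per(E \setminus \Br) = \per(E\, ; \, \R^d \setminus \overline{\Br}) + \per(\Br\, ; \, E^{(1)})$; these hold for a.e.\ $r$ by the coarea formula applied to the distance function and the theory of sets of finite perimeter. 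Substituting the second identity into the optimality inequality and writing $\per(E) = \per(E\, ; \, \Br) + \per(E\, ; \, \R^d \setminus \overline{\Br})$ (valid for a.e.\ $r$), the exterior perimeter terms cancel and I am left with
\begin{equation}
\per(E\, ; \, \Br) - \per(\Br\, ; \, E^{(1)}) \ls \int_{E \cap \Br} v_{\alpha,w}.
\end{equation}
Finally, adding $2\per(\Br\, ; \, E^{(1)})$ to both sides and using the first splitting identity to recognise $\per(E\, ; \, \Br) + \per(\Br\, ; \, E^{(1)}) = \per(E \cap \Br)$ yields exactly \eqref{eq:compareGen}.

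The main obstacle, and the step that requires the most care, is justifying the perimeter splitting identities and the transversality of $\partial \Br$ for almost every $r$; this is where the ``almost every $r$'' hypothesis in the statement is spent. I would handle it by invoking the standard theory of sets of finite perimeter: the function $r \mapsto \per(E\, ;\, \Br)$ is monotone hence differentiable a.e., the measure $|D1_E|$ assigns zero mass to all but countably many spheres, and for the remaining radii the decomposition of $\per(E \cap \Br)$ into the interior contribution plus the contribution $\per(\Br\,;\,E^{(1)})$ carried by the part of the sphere lying in the measure-theoretic interior $E^{(1)}$ of $E$ follows from \cite[Lemma 12.22]{Mag12} and the characterisation of reduced boundaries of intersections; since this is entirely geometric and does not involve $v_{\alpha,w}$, I would simply cite the reference \cite[Lemma 3]{IglMerSch18} as the excerpt does, where these measure-theoretic facts are assembled into precisely this inequality.
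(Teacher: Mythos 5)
Your proof is correct, and it is essentially the argument behind the result: the paper itself defers to \cite[Lemma 3]{IglMerSch18}, whose proof is exactly this comparison of $E$ with $E \setminus \Br$ combined with the a.e.-$r$ perimeter splitting identities $\per(E \cap \Br) = \per(E\,;\,\Br) + \per(\Br\,;\,E^{(1)})$ and $\per(E \setminus \Br) = \per(E\,;\,\R^d \setminus \overline{\Br}) + \per(\Br\,;\,E^{(1)})$, valid for the radii where $\mathcal{H}^{d-1}(\partial^\ast E \cap \partial \Br)=0$. No gaps; your handling of the ``almost every $r$'' transversality is precisely where that hypothesis is used.
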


\begin{remark}Lemma \ref{lem:compareB} only depends on basic properties of the perimeter and minimality, so it's also valid when considering the relative perimeter $\per(F;\Omega)$ corresponding to Neumann boundary conditions (see Section \ref{sec:bounded}).
\end{remark}

With the comparison formula above, to arrive at density estimates one needs precise control on the term $\int_{E\cap \Br} v_{\alpha,w}$ as $r \to 0$. Since $v_{\alpha,w} = A^\ast p_{\alpha,w}$, this control is attained by combining the estimates of Proposition \ref{prop:genproj}, the equiintegrability of $v_{\alpha,0}$ and a parameter choice satisfying
\begin{equation}\label{eq:paramChoice}
 \frac{\|w\|_Y}{\alpha^{\frac{1}{\sigma-1}}} \ls 2 \frac{\|A^\ast\|}{\eta} \delta_{\|\cdot\|^{\sigma'}_{Y^\ast}/\sigma'}\left(\frac{\eta}{\|A^\ast\|}\right),\text{ with }\eta < \Theta_d,\
\end{equation}
$\Theta_d$ being the isoperimetric constant of Proposition \ref{prop:isopIneq}. As in Remark \ref{rem:hilbert}, in the case of $\sigma=2$, $d=2$ and $Y$ a Hilbert space $H$, the expression \eqref{eq:paramChoice} simplifies to $\|w\|_H\|A^\ast\|/\alpha \ls \eta < \Theta_2$, the parameter choice used in \cite{IglMerSch18}.

\begin{remark}Although the right hand side of the inequality \eqref{eq:paramChoice} might look involved, it just provides the optimal constant for the ratio $\|w\|_Y^{\sigma-1}/\alpha$ for which the convergence of level-sets can be proved by the methods presented. In particular, any choice such that $\|w\|_Y^{\sigma-1}/\alpha \to 0$ satisfies \eqref{eq:paramChoice}. The choice $\alpha \sim \|w\|_Y^{\sigma-1}$ also appears as a sufficient condition for linear convergence rates in Bregman distance when the source condition \eqref{eq:sourceCond} is assumed (see \cite[Thm.\ 3.42]{SchGraGroHalLen09} or \cite[Prop.\ 4.19]{SchKalHofKaz12}). One might wonder whether using an a posteriori choice rule is possible. Such linear convergence rates can also be proved using the Morozov discrepancy principle and under source conditions (\cite[Thm.\ 4.2]{Bon09}, \cite[Thm.\ 5.3]{AnzRam10}), but typically only $\|w\|_Y^{\sigma}/\alpha \to 0$ can be ensured for the ensuing parameters \cite[Thm.\ 4.5]{AnzRam10}, which is not enough to conclude \eqref{eq:paramChoice}.\end{remark}

Assuming that the parameter choice satisfies \eqref{eq:paramChoice}, we are now ready to prove the anticipated uniform density estimates:

\begin{theorem}\label{thm:densEsts}
Assume that the parameter choice satisfies \eqref{eq:paramChoice} and that the source condition \eqref{eq:sourceCond} holds. Let $E$ be a minimizer of
\[ F \mapsto \per(F) - \int_F v_{\alpha,w}. \]
Then, there exists $C >0$ and $r_0>0$, independent of $\alpha$ and $w$ such that for every ball $B(x,r_0)$ with $x\in \partial E$, one has, for any $r\ls r_0$,
\begin{equation} 
\frac{|E \cap B(x,r)|}{|B(x,r)|} \gs C \quad \text{and} \quad \frac{|E \setminus B(x,r)|}{|B(x,r)|} \gs C.
 \label{eq:densEsts}
\end{equation}

\end{theorem}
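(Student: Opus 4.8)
The plan is to reduce the statement to a differential inequality for the volume function $g(r):=\abs{E \cap \Br}$, obtained by inserting the comparison formula of Lemma \ref{lem:compareB} into the isoperimetric inequality of Proposition \ref{prop:isopIneq}. The only nontrivial input is that the curvature term $\int_{E\cap\Br} v_{\alpha,w}$, with $v_{\alpha,w}=A^\ast p_{\alpha,w}$, must be shown to be of strictly lower order than the isoperimetric term, with a constant strictly below $\Theta_d$ that is uniform in $\alpha$ and $w$.

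First I would control $v_{\alpha,w}$ in $L^{q'}(\Omega)$ by splitting $v_{\alpha,w}=v_{\alpha,0}+(v_{\alpha,w}-v_{\alpha,0})$. Proposition \ref{prop:dualconv} gives $p_{\alpha,0}\to p_{0,0}$ strongly in $Y^\ast$, so $v_{\alpha,0}=A^\ast p_{\alpha,0}$ converges strongly in $L^{q'}(\Omega)$; in particular $\set{\abs{v_{\alpha,0}}^{q'}}$ is equi-integrable, whence $\norm{v_{\alpha,0}}_{L^{q'}(\Br)}\to 0$ as $r\to 0$ uniformly in $\alpha$. For the remainder, the stability estimate \eqref{eq:parameterest}, the monotonicity of $\rho_{Y,\sigma}$, and the very definition of the parameter choice \eqref{eq:paramChoice} chain together to give $\norm{v_{\alpha,w}-v_{\alpha,0}}_{L^{q'}}\ls \norm{A^\ast}\,\rho_{Y,\sigma}\!\left(\|w\|_Y/(2\alpha^{1/(\sigma-1)})\right)\ls \eta<\Theta_d$.

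Next, fixing $x\in\partial E$ and applying Lemma \ref{lem:compareB}, for a.e.\ $r$ one has
\[\per(E \cap \Br) - \int_{E \cap \Br} v_{\alpha,w} \ls 2\, \per(\Br\,;\, E^{(1)}) = 2\, g'(r),\]
since $\per(\Br;E^{(1)})=\mathcal H^{d-1}(E^{(1)}\cap \partial \Br)=g'(r)$ for a.e.\ $r$. I would estimate the middle term by Hölder: because $q\ls d/(d-1)$ we have $1/q\gs (d-1)/d$, so $g(r)^{1/q}\ls g(r)^{(d-1)/d}$ once $g(r)\ls 1$, and therefore $\abs{\int_{E\cap\Br} v_{\alpha,w}}\ls (\eta+\eps)\,g(r)^{(d-1)/d}$ for small $r$, where $\eps$ absorbs the equi-integrable part and is chosen with $\eta+\eps<\Theta_d$. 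Combining with the isoperimetric lower bound $\per(E\cap\Br)\gs \Theta_d\, g(r)^{(d-1)/d}$ delivers $(\Theta_d-\eta-\eps)\,g(r)^{(d-1)/d}\ls 2\,g'(r)$. Rewriting this as $\tfrac{\dd}{\dd r}g(r)^{1/d}\gs (\Theta_d-\eta-\eps)/(2d)$ and using $g(0)=0$ with $g(r)>0$ (as $x\in\partial E$) integrates to $g(r)\gs c\,r^d$ with $c$ depending only on $d$ and $\Theta_d-\eta$, which is the first estimate in \eqref{eq:densEsts}; the second follows by running the identical argument on $\R^d\setminus E$, which minimizes $F\mapsto \per(F)-\int_F(-v_{\alpha,w})$ and whose density obeys the same bounds.

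The main obstacle is keeping the coefficient $\Theta_d-\eta-\eps$ strictly positive \emph{uniformly} in $\alpha$ and $w$: the differential inequality is vacuous otherwise. This is exactly what the quantitative parameter choice \eqref{eq:paramChoice} (through $\eta<\Theta_d$) and the equi-integrability of the noiseless densities are built to guarantee, and it is precisely here that the hypothesis $q\ls d/(d-1)$ is used, since it forces the Hölder exponent $1/q$ to dominate the isoperimetric exponent $(d-1)/d$ so that the curvature contribution is genuinely subordinate to $g(r)^{(d-1)/d}$.
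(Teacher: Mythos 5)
Your proof is correct and follows essentially the same route as the paper's: the comparison formula of Lemma \ref{lem:compareB}, a H\"older estimate on $\int_{E\cap\Br}v_{\alpha,w}$ controlled uniformly via the stability estimate \eqref{eq:parameterest}, the parameter choice \eqref{eq:paramChoice}, and equi-integrability of the noiseless densities $v_{\alpha,0}$, followed by the isoperimetric inequality and integration of the differential inequality for $g(r)=|E\cap\Br|$. The only differences are cosmetic: you apply H\"older once with exponents $(q',q)$ and use $g(r)^{1/q}\ls g(r)^{(d-1)/d}$ for small $r$, where the paper passes through $L^d$ norms first (the exponents combine to exactly the same bound), and you make explicit the complement argument for the second estimate in \eqref{eq:densEsts}, which the paper leaves implicit.
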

\begin{proof}
Using H\"{o}lder's inequality, that $q'= q/(q-1)\gs d$, the parameter choice \eqref{eq:paramChoice} and the estimate \eqref{eq:genprojest}, we obtain that for any $F \subset \R^d$ with $|F| < \infty$,
\begin{equation}\label{eq:distToNoiseless}\|v_{\alpha,w}-v_{\alpha,0}\|_{L^d(F)} \ls |F|^{\frac{q'-d}{q'd}}\|v_{\alpha,w}-v_{\alpha,0}\|_{L^{q'}(\R^d)} \ls |F|^{\frac{q'-d}{q'd}} \eta.\end{equation}
With this, we obtain
\begin{equation}\label{eq:integralBound}
\begin{aligned}
\left|\int_{E \cap \Br} v_{\alpha, w} \right| &\ls |E \cap \Br|^{\frac{d-1}{d}}\|v_{\alpha,w}\|_{L^d(E \cap \Br)}\\
& \ls |E \cap \Br|^{\frac{d-1}{d}} \left(\|v_{\alpha,0}\|_{L^d(E \cap \Br)}+\|v_{\alpha,w}-v_{\alpha,0}\|_{L^d(E \cap \Br)}\right)\\
&\ls |E \cap \Br|^{\frac{d-1}{d}} \left(\|v_{\alpha,0}\|_{L^d(E \cap \Br)}+|E \cap \Br|^{\frac{q'-d}{q'd}} \eta\right).
\end{aligned}
\end{equation}
Now, by Proposition \ref{prop:dualconv}, $v_{\alpha,0}$ converges strongly in $L^d$ as $\alpha \to 0$, and $|v_{\alpha,0}|^d$ is therefore equiintegrable. This implies that for each $\eps > 0$, there exists $r_\eps$ such that for all $r < r_\eps$ we have $\|v_{\alpha,0}\|_{L^d(E \cap \Br)} < \eps$. Moreover, by possibly reducing $r_\eps$ we may assume that 
\[|E \cap \Br|^{\frac{q'-d}{q'd}} \ls 1.\] 
Assuming $\eps < \Theta_d - \eta$ we can use then \eqref{eq:integralBound} in \eqref{eq:compareGen} and the isoperimetric inequality \eqref{eq:isopIneq} to obtain
\begin{equation}\label{eq:diffIneq1}
\begin{aligned}
2 \per(\Br\, ;\, E^{(1)}) &\gs \per(E \cap \Br) - |E \cap \Br|^{\frac{d-1}{d}} \left(\eps+\eta\right)\\
&\gs |E \cap \Br|^{\frac{d-1}{d}} \left(\Theta_d - \eps - \eta\right).
\end{aligned}
\end{equation}
Additionally, we have that for almost every $r$
\begin{equation}\label{eq:perHaus}
\per(\Br\, ;\, E^{(1)}) = \mathcal{H}^{d-1}(\partial\Br \cap E^{(1)}),
\end{equation}
which in turn is the derivative with respect to $r$ of the function $g(r):=|E \cap \Br|$, turning \eqref{eq:diffIneq1} into the variational inequality
\begin{equation}\label{eq:diffIneq2}
2 g'(r) \gs \left(\Theta_d - \eps - \eta\right) g(r)^{\frac{d-1}{d}}.
\end{equation}
Integrating on both sides taking into account $g(0)=0$, we end up with
\begin{equation}\label{eq:diffIneq3}
2 d g^{\frac{1}{d}}(r) \gs \left(\Theta_d - \eps - \eta\right)r
\end{equation}
which in turn implies the density estimate
\begin{equation}\label{eq:denstEstPf}
\frac{|E \cap \Br|}{|\Br|} \gs \frac{\left(\Theta_d - \eps - \eta\right)^d r^d}{(2d)^d|B(x,r)|}=\frac{\left(\Theta_d - \eps - \eta\right)^d}{(2d)^d|B(0,1)|},
\end{equation}
where the right hand side is uniform in $\alpha$, $w$, $r$ small enough and also in $x$.
\end{proof}

\begin{remark}
Note that if $q < d/(d-1)$ (that is, $q'> d$), the second term inside the parenthesis in the right hand side of \eqref{eq:integralBound} tends to zero as $r \to 0$, which implies that in this case the density estimates still hold for any parameter choice (see \eqref{eq:paramChoice}) that ensures that $\|w\|_Y / \alpha^{1/(\sigma-1)}$ remains finite.
\end{remark}

Combining the compact support assumption with the density estimates of Theorem \ref{thm:densEsts}, we arrive at the desired convergence result.

\begin{theorem}\label{thm:hausConv}
Let $f$ and $A$ satisfy \eqref{eq:sourceCond}, $\alpha_n, w_n \to 0$ satisfying \eqref{eq:paramChoice} and $ u_n := u_{\alpha_n,w_n}$ the corresponding minimizer of \eqref{eq:primalalphaw}. We assume that all the $u_n$ have a common compact support (we will see in Section \ref{sec:infinity} how to lift this artificial assumption). Then, for almost every $s \in \R$, as $n$ grows to infinity, the level-sets $\{u_n>s\}$ converge to $\{u^\dag>s\}$ in the sense of Hausdorff convergence.
\end{theorem}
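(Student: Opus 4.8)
The plan is to upgrade the uniform density estimates of Theorem~\ref{thm:densEsts} into genuine Hausdorff convergence of level-sets by exploiting the $L^1$ convergence of the primal minimizers together with the compact support assumption. First I would invoke Proposition~\ref{prop:linearexistence} to deduce that $u_n \to u^\dag$ strongly in $L^1_{\mathrm{loc}}(\R^d)$; since all the $u_n$ share a common compact support, this is in fact convergence in $L^1(\R^d)$. By the coarea formula and Cavalieri's principle, strong $L^1$ convergence transfers to the level-sets: for almost every $s$ one has $|\{u_n>s\}\,\Delta\,\{u^\dag>s\}| \to 0$, i.e.\ convergence of the level-sets in measure (in $L^1$ of their indicator functions). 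Fixing such a good value $s$ and writing $E_n:=\{u_n>s\}$, $E:=\{u^\dag>s\}$, the goal reduces to promoting $|E_n \Delta E|\to 0$ to $d_H(\partial E_n, \partial E)\to 0$ (or of the sets themselves).

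The mechanism for this promotion is the two-sided uniform density estimate \eqref{eq:densEsts}, which by Proposition~\ref{prop:subgradTV} applies to every $E_n$ with constants $C, r_0$ independent of $n$. I would argue by contradiction in each of the two directions of the Hausdorff distance. Suppose $\sup_{x\in \partial E_n} d(x, \partial E)$ does not tend to zero; then along a subsequence there are points $x_n \in \partial E_n$ at distance at least some $\rho>0$ from $\partial E$, so that the ball $B(x_n, \rho/2)$ is disjoint from $\partial E$ and hence lies entirely inside $E$ or inside its complement. The lower density bound $|E_n \cap B(x_n,r)| \gs C|B(x_n,r)|$ and the analogous bound $|E_n^c \cap B(x_n,r)| \gs C |B(x_n,r)|$ for $r \ls \min(r_0,\rho/2)$ force a definite fraction of $B(x_n,r)$ to lie in $E_n$ and a definite fraction in its complement; combined with the ball being on one side of $\partial E$, this produces a symmetric difference $|E_n \Delta E| \gs C' r^d >0$ bounded away from zero, contradicting $|E_n\Delta E|\to 0$. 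The symmetric argument, using density estimates for $E$ itself (which also enjoys such bounds as the level-set of the limiting minimizer, or by the same comparison principle with $v_{0,0}=A^\ast p_{0,0}$), controls $\sup_{y\in\partial E} d(y,\partial E_n)$.

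The key technical point making the contradiction work is that the density estimates are uniform in $n$: the constant $C$ in \eqref{eq:densEsts} does not degenerate, so the minimal volume a boundary ball must carry on each side is bounded below independently of $n$. The main obstacle I anticipate is bookkeeping the exceptional values of $s$ and the measure-zero sets of ``bad'' radii $r$ appearing in Lemma~\ref{lem:compareB} and in \eqref{eq:perHaus}, together with ensuring the limit set $E=\{u^\dag>s\}$ genuinely satisfies comparable density bounds so that both directions of $d_H$ are controlled; one must also verify that the chosen good values of $s$ are simultaneously Lebesgue points for the coarea decomposition of both $u_n$ (uniformly) and $u^\dag$. Once these are handled, assembling the two one-sided bounds yields $d_H(E_n, E)\to 0$ for almost every $s$, which is precisely the claimed Hausdorff convergence of the level-sets.
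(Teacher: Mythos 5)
Your overall mechanism is the same as the paper's: strong $L^1$ convergence (Proposition \ref{prop:linearexistence} plus the common compact support), Fubini/Cavalieri to get $|\{u_n>s\}\,\Delta\,\{u^\dag>s\}|\to 0$ for a.e.\ $s$, and a contradiction argument powered by the uniform density estimates of Theorem \ref{thm:densEsts}, applied both to $\{u_n>s\}$ and to the limit set $\{u^\dag>s\}$ (the paper likewise invokes the estimates on $\{u^\dag>s\}$ for the second direction). However, there is a mismatch between what you prove and what the theorem asserts. By Definition \ref{def:hausdist}, the claim is $d_H(E_n,E)\to 0$ for the sets $E_n=\{u_n>s\}$ and $E=\{u^\dag>s\}$ themselves, i.e.\ control of $\sup_{x\in E_n}d(x,E)$ and $\sup_{y\in E}d(y,E_n)$; your contradiction argument instead controls $\sup_{x\in\partial E_n}d(x,\partial E)$ and $\sup_{y\in\partial E}d(y,\partial E_n)$, i.e.\ Hausdorff convergence of the boundaries. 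Neither mode of convergence implies the other in general: for $E_n=B(0,1)\setminus \overline{B}(0,1/n)$ and $E=B(0,1)$ one has $d_H(E_n,E)\to 0$ and $|E_n\Delta E|\to 0$ but $d_H(\partial E_n,\partial E)\to 1$, while $E_n=\R^d\setminus B(0,1)$ and $E=B(0,1)$ have identical boundaries yet $d_H(E_n,E)=\infty$. So, as written, your proof establishes a statement genuinely different from the one claimed.

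The gap is fixable in two ways. Either run the contradiction directly on set points, as the paper does: if $x_n\in E_n$ with $d(x_n,E)>L$, then either $B(x_n,L/2)\subset E_n$, which immediately contradicts $|E_n\setminus E|\to 0$, or there is $x_n'\in\partial E_n\cap B(x_n,L/2)$, still at distance $>L/2$ from $E$ --- not merely from $\partial E$ --- and the single lower bound $|E_n\cap B(x_n',r)|\gs C|B(x_n',r)|$ of \eqref{eq:densEsts} for $r\ls\min(r_0,L/2)$ gives $|E_n\setminus E|\gs C|B(x_n',r)|>0$, a contradiction; note this is simpler than your version, since each direction of $d_H$ then uses only one of the two density bounds. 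Or keep your boundary statement and add the bridge from it to the set statement: given $d_H(\partial E_n,\partial E)<\eps$ and $|E_n\Delta E|<|B(0,\eps)|$, any $x\in E_n\setminus E$ with $d(x,\partial E_n)\gs\eps$ satisfies $B(x,\eps)\subset E_n$, hence $|B(x,\eps)\setminus E|\ls|E_n\setminus E|<|B(x,\eps)|$, so $B(x,\eps)\cap E\neq\emptyset$ and $d(x,E)<\eps$; while if $d(x,\partial E_n)<\eps$ then $d(x,E)\ls d(x,\partial E)<2\eps$; together with the symmetric estimate for $y\in E\setminus E_n$ this yields $d_H(E_n,E)\to 0$. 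Without one of these two steps, the argument does not prove the theorem as stated.
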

\begin{proof}
We saw in Proposition \ref{prop:linearexistence} that $u_n \to u$ in $L^1_{\mathrm{loc}}.$ Combined with the compact support assumption for $u_n$, it leads to the full $L^1$ convergence. This implies, using Fubini's theorem (see \cite[Section 3.1]{IglMerSch18}) that for almost every $s$,
\[ \left \vert \{u_n >s\} \Delta \{u^\dag > s \} \right \vert \to 0.\]
Now, let us assume that the Hausdorff distance between these two level-sets does not go to zero. That means, using the definition of this distance, that there exists a constant $L>0$ and either a sequence of points $x_n \in \{u_n > s\}$ such that
$d(x_n,\{u^\dag >s\}) > L$ or a sequence $y_n \in \{u^\dag > s\}$ such that $d(y_n,\{u_n > s\}) >L$. We will treat the first case. One can assume that $x_n \in \partial \{u_n>s\}$. 

Using then the density estimates \eqref{eq:densEsts}, one concludes that for $r \ls \min(r_0,L)$,
\[\left \vert B(x_n,r) \cap \{u_n>s\} \right \vert \gs C |B(x_n,r)|. \]
On the other hand, since $r \ls L$, one has $ B(x_n,r) \cap \{u^\dag >s\} = \emptyset$ which implies that $B(x_n,r) \cap \{u_n>s\} \subset \{u_n >s\} \Delta \{u^\dag > s \}$ and contradicts the $L^1$ convergence.

The second case is treated similarly, but the contradiction is obtained using the density estimates on $\{u^\dag >s\}$.
\end{proof}

\section{Behavior at infinity}\label{sec:infinity}

We now discuss whether it is possible to remove the assumptions on compact support of the solutions that were used in the previous section. In view of the proof of Theorem \ref{thm:hausConv}, this amounts to being able to infer that $u_{\alpha,w} \to u^\dag$ strongly in $L^1(\Omega)$.

\subsection{The critical case}

The following lemma, analogous to \cite[Lemma 5]{IglMerSch18}, tells us that this is indeed possible for the critical exponent $q=d/(d-1)$, with the same parameter choice as in Section \ref{sec:lsConv}.
\begin{lemma}\label{lem:compsupp} Let $q=d/(d-1)$, and assume \eqref{eq:paramChoice} and \eqref{eq:sourceCond}. Then, the elements of
\begin{equation}\label{eq:curvls2-2}
\mathcal{E}:=\set{E \subset \Omega \, \middle | \, \per(E)=\int_E v_{\alpha, w} },
\end{equation}
have the following properties:
\begin{enumerate}
 \item There exists a constant $C > 0$ such that for all $E \in \mathcal{E}$, $\per(E) \leq C$,
 \item There exists a constant $R > 0$ such that for all $E \in \mathcal{E}$, $E \subseteq \mathcal{B}(0,R)$.
\end{enumerate}
\end{lemma}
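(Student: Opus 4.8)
The plan is to reduce everything to a single strict ``subcriticality at infinity'' estimate for $v_{\alpha,w}$, from which the uniform volume bound, the perimeter bound (item 1) and the uniform containment (item 2) follow in turn. When $\Omega$ is bounded both items are trivial ($\abs{E}\ls\abs{\Omega}$ and $E\subseteq\Omega$), so I focus on $\Omega=\R^d$, which is the natural setting for the critical exponent $q=d/(d-1)$. I also note that the $E\in\mathcal E$ have finite measure, so the isoperimetric inequality \eqref{eq:isopIneq} applies (indeed they are the level-sets $\{u_{\alpha,w}>s\}$, $s\gs 0$, via Proposition \ref{prop:subgradTV}).

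First I would record the key estimate. By Proposition \ref{prop:dualconv} and boundedness of $A^\ast\colon Y^\ast\to L^{q'}(\R^d)=L^d(\R^d)$ (here $q'=d$), the noiseless curvatures $v_{\alpha,0}=A^\ast p_{\alpha,0}$ converge strongly in $L^d$; hence $\{\abs{v_{\alpha,0}}^d\}$ is uniformly integrable at infinity. Setting $\eps_0=(\Theta_d-\eta)/2>0$, there is thus a radius $R_0$ independent of $\alpha$ (for $\alpha$ small) with $\|v_{\alpha,0}\|_{L^d(\R^d\setminus B(0,R_0))}\ls\eps_0$. Combining this with the parameter-choice bound \eqref{eq:distToNoiseless}, which for $q'=d$ reads $\|v_{\alpha,w}-v_{\alpha,0}\|_{L^d(\R^d)}\ls\eta$, gives the strict subcriticality
\[\|v_{\alpha,w}\|_{L^d(\R^d\setminus B(0,R_0))}\ls\eps_0+\eta=\Theta_d-\eps_0<\Theta_d,\]
uniformly over the admissible pairs $(\alpha,w)$.

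Next I would bound $\abs{E}$ and deduce item 1. Writing $\per(E)=\int_E v_{\alpha,w}$ and splitting the integral at $R_0$, the isoperimetric inequality gives $\Theta_d\abs{E}^{(d-1)/d}\ls\per(E)$, while H\"older's inequality bounds the inner contribution by a fixed constant $C_1=(M+\eta)\abs{B(0,R_0)}^{(d-1)/d}$ (since $\abs{E\cap B(0,R_0)}\ls\abs{B(0,R_0)}$, with $M=\sup_\alpha\|v_{\alpha,0}\|_{L^d}<\infty$) and the outer one by $(\Theta_d-\eps_0)\abs{E}^{(d-1)/d}$ via the subcriticality estimate. Absorbing the outer term yields $\eps_0\abs{E}^{(d-1)/d}\ls C_1$, hence a uniform bound $\abs{E}\ls(C_1/\eps_0)^{d/(d-1)}$. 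Item 1 is then immediate: $\per(E)=\int_E v_{\alpha,w}\ls(M+\eta)\abs{E}^{(d-1)/d}\ls C$.

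Finally, for item 2 I would set $h(R)=\abs{E\setminus B(0,R)}$ and compare $E$ with $E\cap B(0,R)$ in the minimality of $F\mapsto\per(F)-\int_F v_{\alpha,w}$ (Proposition \ref{prop:subgradTV}, item 4; the minimal value $0$ is attained by $E$). Decomposing the perimeter across $\partial B(0,R)$ for a.e.\ $R$ gives $\per(E\setminus B(0,R))\ls\int_{E\setminus B(0,R)}v_{\alpha,w}+2\mathcal H^{d-1}(E^{(1)}\cap\partial B(0,R))$; bounding the left-hand side below by $\Theta_d h(R)^{(d-1)/d}$, the integral above by $(\Theta_d-\eps_0)h(R)^{(d-1)/d}$ for $R\gs R_0$, and using $\mathcal H^{d-1}(E^{(1)}\cap\partial B(0,R))=-h'(R)$ produces the differential inequality $\eps_0 h(R)^{(d-1)/d}\ls-2h'(R)$. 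Integrating $\tfrac{d}{dR}h^{1/d}\ls-\eps_0/(2d)$ shows $h$ vanishes for $R\gs R_0+\tfrac{2d}{\eps_0}h(R_0)^{1/d}$, and since $h(R_0)\ls\abs{E}$ is uniformly bounded by the previous step this furnishes a uniform $R$ with $E\subseteq B(0,R)$. The main obstacle is the strict subcriticality of the second paragraph: it is exactly here that the critical exponent enters, through both the uniform tightness of the noiseless dual solutions and the matching $q'=d$ that makes \eqref{eq:distToNoiseless} independent of $\abs F$; once the gap $\eps_0$ is secured, the absorption and extinction arguments are routine.
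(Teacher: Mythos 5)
Your proof is correct, and it splits naturally into a half that mirrors the paper and a half that takes a genuinely different route. For the volume bound and item 1 you argue essentially as the paper does: tightness of $(v_{\alpha,0})$ in $L^d$ from the strong convergence of Proposition \ref{prop:dualconv}, the parameter-choice estimate $\|v_{\alpha,w}-v_{\alpha,0}\|_{L^d(\R^d)}\ls\eta$ (the case $q'=d$ of \eqref{eq:distToNoiseless}), a splitting of $\int_E v_{\alpha,w}$ at a fixed ball, and the isoperimetric inequality \eqref{eq:isopIneq}; the only cosmetic difference is that you absorb the term $(\Theta_d-\eps_0)\abs{E}^{(d-1)/d}$ directly into the isoperimetric lower bound, whereas the paper absorbs into $\per(E)$ using subadditivity of the perimeter, so both yield the same uniform bounds. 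For item 2 the paper does something quite different: it first shows that mass and perimeter of elements of $\mathcal E$ are bounded away from zero, then decomposes $E$ into indecomposable components, bounds the diameter of each component by packing disjoint balls and invoking the density estimates of Theorem \ref{thm:densEsts}, and finally shows each component must intersect a fixed ball. Your replacement — comparing $E$ with $E\cap B(0,R)$ (legitimate since $v_{\alpha,w}\in\partial\TV{0}$ makes $F\mapsto\per(F)-\int_F v_{\alpha,w}$ nonnegative, so members of $\mathcal E$ are global minimizers with value $0$), cutting the perimeter along $\partial B(0,R)$, and integrating the differential inequality $\eps_0\,h(R)^{(d-1)/d}\ls -2h'(R)$ for $h(R)=\abs{E\setminus B(0,R)}$ until extinction — is correct and more self-contained: it needs neither the decomposition theorem nor Theorem \ref{thm:densEsts}, and it produces an explicit containment radius $R_0+\tfrac{2d}{\eps_0}\sup\abs{E}^{1/d}$. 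What the paper's longer route buys is structural by-products (uniform lower bounds on components, a uniform diameter bound, and the disjoint-balls technique) that are reused later, notably in the elementary proof of Lemma \ref{lem:farness}. Two points you should state explicitly in a final write-up: the identity $\mathcal{H}^{d-1}(E^{(1)}\cap\partial B(0,R))=-h'(R)$ holds only for a.e.\ $R$ (as in \eqref{eq:perHaus}), which suffices for the integration; and the finiteness of $\abs{E}$, needed for both \eqref{eq:isopIneq} and the absorption step, is an a priori restriction that the paper also makes (``for every $E$ with finite mass belonging to $\mathcal E$''), which you correctly justify via Proposition \ref{prop:subgradTV}.
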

\begin{proof}
The proof is very similar to what is done in \cite{ChaDuvPeyPoo17, IglMerSch18}. 

Here, by Proposition \ref{prop:dualconv}, we have that $v_{\alpha,0} \to v_0$ strongly in $L^{q'}=L^{d}$, and therefore the family $(v_{\alpha,0})$ is $L^{d}(\R^d)$-equiintegrable, which in particular means that for every $\eps > 0$, one can find a ball $B(0,R)$ such that 
\[\int_{\R^d \setminus B(0,R)} |v_{\alpha,0}|^{d} \ls \eps.\]
Then, for every $E$ with finite mass belonging to $\mathcal E$ and provided $\alpha$ and $w$ satisfy \eqref{eq:paramChoice} we get as in \eqref{eq:distToNoiseless} that
\begin{align*}
 \per(E) &\ls \left \vert \int_E (v_{\alpha,w} - v_{\alpha,0}) \right \vert + \left \vert \int_{E \cap B(0,R)} v_{\alpha,0} \right \vert + \left \vert \int_{E \setminus B(0,R)} v_{\alpha,0} \right \vert \\
 & \ls \eta |E|^{\frac{d-1}{d}} + |B(0,R)|^{\frac{d-1}{d}} \Vert v_{\alpha,0} \Vert_{L^{d}} + |E \setminus B(0,R)|^{\frac{d-1}{d}}  \eps \\
 & \ls \left( \eta + \sup_{\alpha} \Vert v_{\alpha,0} \Vert_{L^{d}} \right) |B(0,R)|^{\frac{d-1}{d}}  + (\eta+\eps) |E\setminus B(0,R)|^{\frac{d-1}{d}} .
\end{align*}
Now, the isoperimetric inequality \eqref{eq:isopIneq} and sub-additivity of the perimeter \eqref{eq:unionint} lead to
\[|E \setminus B(0,R)|^{\frac{d-1}{d}} \ls \frac{1}{\Theta_d}\per (E\setminus B(0,R)) \ls \frac{1}{\Theta_d} \big( \per(E) + \per(B(0,R)) \big),\]
which when used in the previous equation, since $\eps$ is arbitrary and $\eta < \Theta_d$, implies that $ \per(E)$ is bounded uniformly in $\alpha$. Once again using the isoperimetric inequality yields the boundedness of $|E|$ independently of $\alpha$, as long as \eqref{eq:paramChoice} is satisfied.

We now prove that the mass and perimeter of level-sets of $u_{\alpha,w}$ are bounded away from zero. The equiintegrability of $(v_{\alpha,0})$ ensures that there is no concentration of mass for $v_{\alpha,0}$, that is, for any $\eps>0$ we can ensure $\int_E |v_{\alpha,0}|^{d} \ls \eps$ if $|E|$ small enough. Then, if $E$ belongs to $\mathcal E$, H\"older inequality provides an inequality of the type
\[\per(E) \ls \eps^{\frac{1}{d}} |E|^{\frac{d-1}{d}},\]
which together with the isoperimetric inequality, implies $\per(E) \ls \eps^{1/d} \Theta_d^{-1} \per(E)$, which is not possible for $\eps$ too small. Therefore, $|E|$ must be bounded away from zero (and $\per(E)$ as well thanks to the isoperimetric inequality).

Similarly to what is done in \cite{ChaDuvPeyPoo17,IglMerSch18}, one can (see \cite[Thm.\ B.29]{AndCasMaz04}) decompose any $E \in \mathcal E$ into  
\[E = \bigcup_{i \in I} E_i, \qquad \per(E) = \sum_i \per(E_i)\] with $I$ being finite or countable and all the $E_i$ being indecomposable (``connected''). By splitting the equation defining $E$ into the $E_i$ (similarly to \cite[Remark 4]{ChaDuvPeyPoo17}), one infers that each $E_i$ satisfies
 \begin{equation} \label{eq:ccurvei} \per(E_i) = \int_{E_i} v_{\alpha,w}\end{equation}
which implies, using the same reasoning as before, that both $|E_i|$ and $\per(E_i)$ are bounded away from zero, by constant that does not depend on $i,w$ nor $\alpha$.

We show now that each $E_i$ must have a bounded diameter. This step will actually make use of the density estimates we showed in Theorem \ref{thm:densEsts} (the proof of which does not make use of the compact support of $u_{\alpha,w}$). If it were not the case, there would exist a sequence of points $x_n \in \partial E_i \to \infty$ such that $|x_j - x_k| \gs 2r_0$ for $j\neq k$, where $r_0$ is defined in Theorem \ref{thm:densEsts}. Using the same theorem, one obtains that $|B(x_j,r_0) \cap E_i| \gs C r_0^d.$ Summing over the (disjoint) balls $B(x_j,r_0)$, we get a contradiction with the boundedness of $|E_i|$. Since both $r_0$ and $C$ are uniform, the bound on the diameter is actually independent from $i,w,\alpha$.

Finally, since $(v_{\alpha,w})$ is equiintegrable in $L^{d}$, one can find $\tilde R>0$ such that
\[\Vert v_{\alpha,w} \Vert_{L^{d}(\R^n \setminus B(0,\tilde R))} \ls \frac{\Theta_d}{2}.\]
Then, if one component $E_i$ had an empty intersection with $B(0,\tilde R)$, we would have, using \eqref{eq:ccurvei} and H\"older inequality
\[\per(E_i) \ls \frac{\Theta_d}{2} | E_i|^{\frac{d-1}{d}}.\]
We can then make use of the isoperimetric inequality to obtain
\[\per(E_i) \ls \frac{\Theta_d}{2} | E_i|^{\frac{d-1}{d}} \ls \frac{\Theta_d}{2} \frac{\per(E_i)}{\Theta_d},\]
a contradiction. Then, any component of $E$ intersects $B(0,\tilde R)$ and the bound on their diameter implies the existence of $R$ such that that $E \subset B(0,R)$.
\end{proof}

This lemma actually provides (for the particular value of $q$ assumed) the common compact support that was assumed in Theorem \ref{thm:hausConv}. This assumption can therefore be removed from that result, and we obtain:
\begin{theorem}
Assume $q=d/(d-1)$, and let $A$, $f$, and $\alpha_n, w_n \to 0$ as in Theorem \ref{thm:hausConv}, with $u_n$ minimizing $(P_{\alpha_n,w_n})$. Then, for almost every $s \in \R$, as $n$ grows to infinity, the level-sets $\{u_n>s\}$ converge to $\{u^\dag>s\}$ in the sense of Hausdorff convergence.
\end{theorem}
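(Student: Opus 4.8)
The plan is to observe that this statement differs from Theorem \ref{thm:hausConv} only in that the common compact support of the minimizers is no longer assumed but must instead be derived, and that Lemma \ref{lem:compsupp} is precisely the tool that supplies it. First I would recall that, by item \ref{i:3} of Proposition \ref{prop:subgradTV} applied to $u_n := u_{\alpha_n,w_n}$ together with the optimality condition \eqref{eq:dualalphawdiff}, for almost every $s>0$ the level-set $\{u_n>s\}$ satisfies $\per(\{u_n>s\}) = \int_{\{u_n>s\}} v_{\alpha_n,w_n}$ and hence belongs to the family $\mathcal{E}$ of \eqref{eq:curvls2-2}. Since the parameter choice \eqref{eq:paramChoice} and the source condition \eqref{eq:sourceCond} are in force and $q=d/(d-1)$, Lemma \ref{lem:compsupp} provides a radius $R>0$, independent of $\alpha_n$, $w_n$ and of $s$, with $\{u_n>s\}\subseteq B(0,R)$ for every such $s$; taking the union over $s>0$ gives $\{u_n>0\}\subseteq B(0,R)$ for all $n$.

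The one point requiring care is the negative values, where the orientation must be chosen correctly: for $s<0$ it is $\{u_n<s\}$, not $\{u_n>s\}$, that is bounded. To treat these I would apply the whole analysis to $\tilde u_n := -u_n$, which by linearity of $A$ and evenness of $\|\cdot\|_Y$ and of $\TV{\cdot}$ is the minimizer of \eqref{eq:primalalphaw} with data $-f$ and noise $-w_n$, and whose associated dual quantity is $-v_{\alpha_n,w_n}$. The hypotheses transfer verbatim, since $-v_{\alpha,0}\to -v_0$ strongly in $L^{d}$ by Proposition \ref{prop:dualconv} and $\|(-v_{\alpha,w})-(-v_{\alpha,0})\|_{L^{q'}}=\|v_{\alpha,w}-v_{\alpha,0}\|_{L^{q'}}\ls\eta$ by \eqref{eq:genprojest}; thus Lemma \ref{lem:compsupp} (equivalently, its proof with $v$ replaced by $-v$) applies equally to the family $\{E : \per(E)=\int_E(-v_{\alpha,w})\}$. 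Since $\{u_n<s\}=\{\tilde u_n>-s\}$ for $s<0$ lies in this family, I obtain a radius $R'$ with $\{u_n<0\}=\bigcup_{s<0}\{u_n<s\}\subseteq B(0,R')$ uniformly in $n$. Combining, $\supp u_n \subseteq \overline{B(0,\max(R,R'))}$ for all $n$, which is exactly the common compact support hypothesis of Theorem \ref{thm:hausConv}.

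With this established, the proof is concluded by invoking Theorem \ref{thm:hausConv}: its argument upgrades the $L^1_{\mathrm{loc}}$ convergence of Proposition \ref{prop:linearexistence} to full $L^1$ convergence using precisely the common compact support, and then passes through the Fubini argument and the density estimates of Theorem \ref{thm:densEsts}. It is worth emphasizing that this reduction is legitimate only because the density estimates of Theorem \ref{thm:densEsts} and the boundedness statements of Lemma \ref{lem:compsupp} were proved without any compact support assumption. The main obstacle is therefore not analytical depth but careful bookkeeping: one must verify that the radius furnished by Lemma \ref{lem:compsupp} is genuinely uniform in both $s$ and $n$ — which holds because $\mathcal{E}$ is defined independently of $s$ and the constants in the lemma depend only on the equiintegrability of the convergent family $(v_{\alpha_n,0})$ and on the fixed constant $\eta<\Theta_d$ of the parameter choice — and that the sign conventions for the negative level-sets are handled correctly.
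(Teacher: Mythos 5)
Your proposal is correct and follows exactly the route the paper intends: the theorem is stated as an immediate consequence of Lemma \ref{lem:compsupp} supplying the common compact support required by Theorem \ref{thm:hausConv}, which is precisely your reduction. Your explicit verification that the positive level-sets lie in $\mathcal{E}$ via Proposition \ref{prop:subgradTV}, and your sign-flipped application of the lemma to $-u_n$ (with data $-f$, noise $-w_n$, and dual quantity $-v_{\alpha_n,w_n}$) to bound the negative level-sets, are details the paper leaves implicit but are handled correctly.
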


\paragraph*{An elementary proof of Lemma \ref{lem:farness}.}
Arguing as in the proof of Lemma \ref{lem:compsupp}, it is possible to obtain a fairly elementary proof of Lemma \ref{lem:farness} that doesn't require strong regularity results. To see this, consider 
\[ E_0 \in \underset{F}{\arg\min} \, \per(F)-\int_F f\text{, with }f=c_1 1_{B(0,r_1)}+c_2 1_{B(x_0,r_2)},\]
and assume $c_1 > 3/r_1$, $c_2 > 3/r_2$, and $E_0$ connected. 

For simplicity, we denote $B(0,r_1)$ by $B_1$ and $B(x_0,r_2)$ by $B_2$ in the rest of this argument. First, we notice that the $L^1$-optimal variational curvature $\kappa$ of $B_1 \cup B_2$ of Proposition \ref{prop:CurvL1} satisfies 
\[\kappa \, 1_{B_1 \cup B_2}  = \frac{3}{r_1} 1_{B_1}+\frac{3}{r_2} 1_{B_2}.\]
Therefore, even without knowledge of $\kappa$ outside $B_1 \cup B_2$ we can write
\begin{equation}\label{eq:compunballs}
\begin{aligned}
\per(B_1 \cup B_2)-\int_{B_1 \cup B_2}\kappa &\ls \per((B_1 \cup B_2) \cap E_0)- \int_{(B_1 \cup B_2) \cap E_0}\kappa\\
\per(E_0)-\int_{E_0}f &\ls \per((B_1 \cup B_2) \cup E_0)- \int_{(B_1 \cup B_2) \cup E_0}f,
\end{aligned}
\end{equation}
where summing and using the union-intersection inequality (as in \eqref{eq:unionint}) we get
\[0 \gs \int_{(B_1 \cup B_2) \setminus E_0}f - \kappa = \left(c_1-\frac{3}{r_1}\right)|B_1 \setminus E_0|+\left(c_2-\frac{3}{r_2}\right)|B_2 \setminus E_0|,\]
which, as $c_i > 3/r_i$, implies $B_1 \subset E_0$ and $B_2 \subset E_0$. Since $\supp f = \overline{B_1} \cup \overline{B_2}$, these inclusions mean that we can reformulate the problem for $E_0$ as minimizing perimeter with an inclusion constraint (as in the obstacle problem of \cite[Lemma, p.132]{BarMas82} and \cite[Lemma 9]{IglMerSch18}), so that
\[E_0 \in \underset{F\supseteq B_1 \cup B_2}{\arg\min} \, \per(F).\]
Now, this variational problem and the isoperimetric inequality provide us with the bound 
\begin{equation}\label{eq:massboundconst}|E_0| \ls \Theta_d^{-\frac{d}{d-1}} \per(E_0)^{\frac{d}{d-1}} \ls \Theta_d^{-\frac{d}{d-1}} \big(\per(B_1) + \per(B_2)\big)^{\frac{d}{d-1}}.\end{equation}
On the other hand, for a point $x \in \partial E_0$ and a radius $r$ such that $\Br \cap B_1 = \emptyset$ and $B(x,r) \cap B_2 = \emptyset$ we have that $\restr{f}{\Br}=0$, implying the density estimate 
\[|E_0 \cap B(x,r')| \gs C |B(x,r')|\text{ for }r'\ls r_\eps < r,\]
the constant $C$ and the maximal radius $r_{\eps}$ at which the estimate holds being independent of $x$, of $c_1, c_2$ and of the separation $|x_0|$ between the centers of $B_1$ and $B_2$. As in Lemma \ref{lem:compsupp}, if $|x_0|$ is large we may use many disjoint balls (connectedness and the fact that $E_0$ intersects $B_1$ and $B_2$ imply that we can find at least $(|x_0|-r_1-r_2)/r$ of them) to obtain a contradiction with the mass bound \eqref{eq:massboundconst}.

\subsection{The subcritical case}
If $q < d/(d-1)$, unless we work in a bounded set (see Section \ref{sec:bounded}) there is no hope to obtain a consistent regularization scheme with Hausdorff convergence of level-sets, since the data term fails to control the behavior at infinity of the solutions and subgradients. This is already hinted at in Proposition \ref{prop:linearexistence}, where we cannot guarantee obtaining a minimizer in $L^q(\R^d)$. 

To demonstrate further, we have a closer look at the two-dimensional Radon transform in $\R^2$ with measurements in $L^2$. We construct a sequence of perturbations and regularization parameters which satisfy the parameter choice inequality \eqref{eq:paramChoice}, but nevertheless force the level-sets of potential solutions to escape to infinity. The implication is that in this setting it is advisable to work in a bounded domain. For example, in \cite{BurMulPapSch14} a model is presented, which uses total variation regularization and a $L^2$ data term as an approximation of the Poisson noise model for photon emission tomography (PET) reconstruction. For this model, the analysis performed is indeed done on bounded domains.

We will need the following lemma:

\begin{lemma}\label{lem:curvannul}
Let $r_1 < r_2$, $B_1 = B(0,r_1)$, $B_2 = B(0,r_2)$ and $\mathcal A_{r_1,r_2} := B_2 \setminus B_1$ be an annulus in $\R^2.$ We denote by $\kappa_{\mathcal A}$ an optimal curvature in the sense of Proposition \ref{prop:CurvL1}. Then, $\kappa_{\mathcal A}$ is constant on $\mathcal A_{r_1,r_2}$, with value $2(r_1+r_2)/(r_2^2-r_1^2)$.
\end{lemma}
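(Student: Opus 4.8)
The plan is to use the defining minimization property of the optimal variational curvature from Proposition \ref{prop:CurvL1}, specialized to the annulus $\mathcal A_{r_1,r_2}$ with density $h \equiv 1$ on (a neighborhood of) $\mathcal A$. By the remark following Proposition \ref{prop:CurvL1}, since $\mathcal A$ is bounded we are entitled to take $h=1$ on its convex envelope, and the resulting $\kappa_{\mathcal A}$ is the unique curvature minimizing all $L^p(\mathcal A)$ norms. The key observation is that $\kappa_{\mathcal A}$ is obtained through the values $\kappa_{\mathcal A}(x)=\inf\{\lambda>0 \mid x \in E_\lambda\}$, where $E_\lambda$ minimizes $F \mapsto \per(F)-\lambda|F|$ among $F \subset \mathcal A$. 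So the heart of the computation is to identify these inner minimizers $E_\lambda$ explicitly.

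First I would exploit the rotational symmetry of the problem: since both the constraint set $\mathcal A$ and the functional $F \mapsto \per(F)-\lambda|F|$ are invariant under rotations about the origin, and by strict convexity-type/uniqueness considerations for the minimizer (or by a symmetrization argument), the minimizer $E_\lambda$ can be taken to be rotationally symmetric, hence a union of concentric annuli. I would then argue that the only sensible candidates are $E_\lambda = \emptyset$ or $E_\lambda = \mathcal A_{\rho,r_2}$ for some inner radius $\rho \in [r_1,r_2]$; filling from the \emph{outer} boundary inward is favored because the outer circle $\partial B_2$ is already ``free'' perimeter that one pays for regardless, whereas creating a new inner interface at radius $\rho$ costs $2\pi\rho$. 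Concretely, for $E=\mathcal A_{\rho,r_2}$ one computes $\per(E)-\lambda|E| = 2\pi r_2 + 2\pi\rho - \lambda\pi(r_2^2-\rho^2)$, and minimizing the right-hand side over $\rho \in [r_1,r_2]$ gives the optimal $\rho$. Differentiating, the interior critical point satisfies $2\pi - 2\lambda\pi\rho = 0$, i.e. $\rho = 1/\lambda$, which lies in $(r_1,r_2)$ exactly when $1/r_2 < \lambda < 1/r_1$.

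From this one reads off the sublevel structure: $x$ enters $E_\lambda$ (i.e. $\kappa_{\mathcal A}(x) = \inf\{\lambda \mid x \in E_\lambda\}$) precisely as the filling radius $\rho=1/\lambda$ sweeps past $|x|$. A point at radius $\rho \in (r_1,r_2)$ is included once $1/\lambda \le \rho$, i.e. $\lambda \ge 1/\rho$, which would naively give a \emph{nonconstant} value $1/\rho$. Here I expect the main obstacle: the claimed answer is the \emph{constant} $2(r_1+r_2)/(r_2^2-r_1^2)$, so a pointwise ``first inclusion time'' computation alone cannot be right. The resolution must be that the filling does not happen gradually but in a single jump: the map $\rho \mapsto \per(\mathcal A_{\rho,r_2})-\lambda|\mathcal A_{\rho,r_2}|$ should be compared against the empty set and against $\rho=r_1$ (the full annulus), and because of the concavity/convexity in $\rho$ the minimizer jumps discontinuously from $\emptyset$ to the full annulus $\mathcal A_{r_1,r_2}$ at a single threshold $\lambda^\ast$. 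I would therefore compute, as a function of $\lambda$, $\min\big(0,\; \min_{\rho}[\per(\mathcal A_{\rho,r_2})-\lambda|\mathcal A_{\rho,r_2}|]\big)$, compare the energy of the full annulus $\per(\mathcal A_{r_1,r_2})-\lambda|\mathcal A_{r_1,r_2}| = 2\pi(r_1+r_2)-\lambda\pi(r_2^2-r_1^2)$ against $0$, and find the threshold $\lambda^\ast = 2(r_1+r_2)/(r_2^2-r_1^2)$ at which the full annulus first beats the empty set. At that threshold every point of $\mathcal A$ is simultaneously included, giving $\kappa_{\mathcal A}\equiv \lambda^\ast$ on $\mathcal A$, which is exactly the asserted constant.

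The remaining care is to confirm that annular configurations filling from the outside genuinely dominate all competitors (so that no intermediate $E_\lambda$ occurs), which I would justify by checking that the candidate interior critical radius yields a \emph{local maximum} rather than minimum of the energy in $\rho$ (since the area term is concave in $\rho$ while the perimeter term is linear, the constrained minimum is attained at an endpoint $\rho \in \{r_1,r_2\}$, i.e. at the full annulus or the empty set), together with the symmetry reduction. Once the transition is shown to be an all-at-once jump at $\lambda^\ast$, the formula for $\kappa_{\mathcal A}$ on $\mathcal A$ follows immediately, and its constancy is automatic. I would note in passing that this constant equals $\per(\mathcal A_{r_1,r_2})/|\mathcal A_{r_1,r_2}|$, consistent with the analogous ball computation of Example \ref{ex:curvball}, which is a good sanity check.
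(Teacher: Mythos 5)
Your strategy is the same as the paper's: reduce by rotational invariance to concentric annuli, compute the energy $\per(F)-\lambda|F|$ explicitly, and show that the minimizer jumps from $\emptyset$ to the full annulus at the threshold $\lambda^\ast=2(r_1+r_2)/(r_2^2-r_1^2)$, whence \eqref{eq:kappavals} gives the constant value. Your final threshold comparison and conclusion are correct. But two steps in the middle are wrong as written. First, the reduction to outer-anchored annuli $\mathcal A_{\rho,r_2}$ rests on a false premise: nothing is ``free'' here, since $\per$ is the absolute perimeter in $\R^2$ and both circles cost. In fact, for a general competitor $\mathcal A_{r_a,r_b}\subseteq\mathcal A_{r_1,r_2}$ the energy
\begin{equation}
E(r_a,r_b)=2\pi(r_a+r_b)-\lambda\pi\left(r_b^2-r_a^2\right)
\end{equation}
satisfies $\partial E/\partial r_a=2\pi+2\lambda\pi r_a>0$, so lowering the \emph{inner} radius always helps: the energetically favored one-parameter family is the inner-anchored one $\mathcal A_{r_1,r_b}$, the opposite of your claim. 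Second, your ``interior critical point'' is a sign error: for your family, $E(\rho)=2\pi r_2+2\pi\rho-\lambda\pi(r_2^2-\rho^2)$ has $E'(\rho)=2\pi+2\lambda\pi\rho>0$, strictly increasing with no critical point; the equation $2\pi-2\lambda\pi\rho=0$ and the concavity-in-$\rho$ you invoke belong to the inner-anchored family $\mathcal A_{r_1,\rho}$. The ``obstacle'' you then set out to resolve (gradual filling producing the nonconstant value $1/\rho$) is an artifact of this mix-up.

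The genuine gap that remains is that your written argument never dominates the two-parameter family $\mathcal A_{r_a,r_b}$ (nor finite unions of disjoint annuli, which the paper dispatches by decomposing into indecomposable components as in Lemma \ref{lem:farness}); you only compare $\emptyset$ with outer-anchored annuli, and your stated reason for discarding everything else is invalid. The repair is short, and is exactly what the paper's ``computing the energy of this annulus'' amounts to: by the monotonicity in $r_a$ reduce to $r_a=r_1$; then $r_b\mapsto E(r_1,r_b)$ is a downward parabola, so its minimum over $[r_1,r_2]$ is attained at an endpoint, and the degenerate endpoint has parametrized energy $4\pi r_1>0$, which never beats $\emptyset$. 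Hence any minimizer is (equivalent to) $\emptyset$ or $\mathcal A_{r_1,r_2}$, and comparing $2\pi(r_1+r_2)-\lambda\pi(r_2^2-r_1^2)$ with $0$ yields precisely your threshold $\lambda^\ast$ and the constancy of $\kappa_{\mathcal A}$ on the annulus.
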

\begin{proof}
 Thanks to the rotational invariance of the problem, there exists a minimizer of 
 \[ F \mapsto \per(F) - \lambda |F| \]
among $F \subseteq \mathcal A_{r_1,r_2}$ which is rotationally invariant. We can furthermore decompose it into ``connected components'' as in Lemma \ref{lem:farness}: there is a minimizer which is an annulus $\mathcal A_{r_a, r_b}.$ Computing the energy of this annulus makes clear that such a minimizer is actually either empty (if $\lambda \ls 2(r_1+r_2)/(r_2^2-r_1^2)$) or equal to $\mathcal A_{r_1,r_2}$ (if $\lambda \gs 2(r_1+r_2)/(r_2^2-r_1^2)$).
\end{proof}

\begin{example}\label{ex:radoncounterex}
We consider $A=\Rd$ the Radon transform in the plane, $Y=L^2([0,2\pi) \times \R^+)$, $\sigma=2$ and $q=4/3$. We note that in the plane $\Rd$ is a bounded operator from $L^{4/3}$ to $L^2$ \cite{Obe83, ObeSte82}. The starting point is the noiseless measurement $f:=\Rd 1_{B(0,1)}$, which since $\Rd$ is injective, gives rise to the corresponding minimal variation solution $u^\dag = 1_{B(0,1)}$. 
For a fixed $\delta \in (0,1/2)$, we define the perturbation
\begin{equation}w_n := \Rd z_n\text{, for }z_n :=\frac{1}{n^{3/2+\delta}}1_{\A_n}\text{ with }\A_n := B(0,2n) \setminus B(0,n).\end{equation}
The corresponding sequence of regularization parameters is defined as $\alpha_n=\ell n^{-\delta} \to 0$ for a constant $\ell$, for which we can compute
\[\frac{1}{\alpha_n}\|w_n\|_{Y} \ls C\frac{1}{\alpha_n}\|z_n\|_{L^{4/3}(\R^d)}=C \alpha_n^{-1} n^{-3/2-\delta} |\A_n|^{3/4} = C \ell^{-1} n^{-3/2} |\A_n|^{3/4}\ls C \ell^{-1},\]
meaning that the parameter choice inequality required for Hausdorff convergence of level-sets \eqref{eq:paramChoice} holds if $\ell$ is chosen large enough. Notice also that the condition listed in Proposition \ref{prop:linearexistence} for convergence of minimizers is also automatically satisfied, since in addition to the above we have $\|w_n\|_{Y} \to 0$.

Now, assume for the sake of contradiction that we had a sequence $u_n$ of minimizers of \eqref{eq:primalalphaw}, all of them supported in a compact set $B$. Using $\Rd u^\dag = f$, the optimality condition for \eqref{eq:primalalphaw} reads
\[v_n := -\frac{1}{\alpha_n} \Rda \Rd \big(u_n - u^\dag - z_n\big) \in \partial \TV{u_n}.\]
However \cite[Thm.\ 1.5]{Nat01}, the operator $\Rda \Rd$ is proportional the Riesz potential operator of order one,
\[\Rda \Rd u = 2\, I_1 u \text{, with }I_1 u(x):=\int_{\R^2}\frac{u(y)}{|x-y|}\dd y,\]
which allows us to consider $x \in \A_n$ (for which we have $u^\dag(x) = 0$) and estimate $v_n(x)$ for large $n$. On the one hand we have the common compact support $B$ for all $u_n$, implying that
\begin{equation}\label{eq:ucontrib}\begin{aligned}\frac{1}{\alpha_n}I_1 u_n(x) &= \frac{1}{\alpha_n}\int_{\R^2}\frac{u_n(y)}{|x-y|}\dd y \ls \frac{1}{\alpha_n d(x, \supp u_n)}\|u_n\|_{L^1(\R^d)} \\ &\ls \frac{1}{\alpha_n d(x, B)}\|u_n\|_{L^1(\R^d)} \ls C \ell^{-1} n^{-1+\delta},\end{aligned}\end{equation}
where we have used $x \in \A_n$, the common compact support and Proposition \ref{prop:linearexistence} to conclude that $\|u_n\|_{L^1(\R^d)}$ is a bounded sequence, since $u_n \to u^\dag$ in $L^1(\R^d)$. Notice that \eqref{eq:ucontrib} also holds when replacing $u_n$ by $u^\dag$. On the other hand, we have
\begin{equation}\label{eq:zcontrib}\begin{aligned}\frac{1}{\alpha_n} I_1 z_n(x)&=\frac{1}{\alpha_n}\int_{\R^2}\frac{z_n(y)}{|x-y|}\dd y=\frac{1}{\alpha_n}\int_{\A_n}\frac{z_n(y)}{|x-y|}\dd y \gs \frac{1}{\alpha_n \diam(\A_n)}\int_{\A_n}z_n(y)\dd y \\
&\gs \frac{C}{n \alpha_n}\int_{\A_n}z_n(y)\dd y = \frac{C |\A_n| n^{-3/2-\delta}}{n \alpha_n} = C \ell^{-1} n^{-1/2},
\end{aligned}\end{equation}
from which, in combination with \eqref{eq:ucontrib} and since $\delta < 1/2$, we can conclude that in fact also $v_n(x)\gs C_{v} \ell^{-1} n^{-1/2}$ for all $x \in \A_n$, some constant $C_v$ and $n$ large enough. Additionally, by Lemma \ref{lem:curvannul} the optimal curvature $\kappa_{\A_n}$ of $\A_n$ satisfies $\kappa_{\A_n}(x)\ls C_{k}/n$ for all $x$ in $\A_n$ and some constant $C_k$. Combining these two estimates we have therefore for $n$ large enough the pointwise curvature comparison 
\[v_n(x) - \kappa_{\A_n}(x) \gs \big(C_v \ell^{-1} - C_k n^{-1/2}\big) n^{-1/2} >  \frac12 C_v \ell^{-1}n^{-1/2} > 0,\]
which in turn (comparing as in \eqref{eq:compunballs}) implies that any minimizer of
\[F\mapsto \per(F) - \int_F v_n\]
must contain $\A_n$, a contradiction with the common compact support for all $u_n$.
\end{example}
\section{Remarks on bounded domains and boundary conditions}\label{sec:bounded}
So far, the convergence results that we have proved hold for functions defined in the whole $\R^d$. Nevertheless, our results also apply to bounded domains with either Dirichlet or Neumann boundary conditions on a bounded set $\Omega$ that satisfies mild regularity assumptions. This adaptation has been explained in detail for solutions in $L^2(\R^2)$ in \cite{IglMerSch18}, and as we have seen, restricting to bounded domains is also necessary for the case $q < d/(d-1)$. We now briefly present the required constructions. 
 
\paragraph{Bounded domain with Dirichlet conditions.} Here, $u \in L^q(\Omega)$ and $\TV{u}$ is the total variation, computed in $\R^d$, of the extension $\tilde u$ of $u$ by zero outside $\Omega$. Differently said, it means that the jump of $u$ to zero at the boundary of $\Omega$ is taken into account. This is well defined if $\Omega$ is an extension domain, for example Lipschitz. In the following, we will need that $\Omega$ has also a variational curvature $\kappa_\Omega$ that satisfies
\begin{equation} \label{eq:DomDirich} \restr{\kappa_\Omega}{\R^d \setminus \Omega} \in L^d(\R^d \setminus \Omega). \end{equation}
In particular, any convex or $\mathcal C^{1,1}$ domain will satisfy this assumption. 

The existence of a minimizer for the approximate problem and its convergence in $L^1$ to a minimal total variation solution (Proposition \ref{prop:linearexistence}) as well as the duality analysis (Propositions \ref{prop:lineardual} and \ref{prop:dualconv}) still hold with no modification. The results related to the parameter choice (Propositions \ref{prop:shiftingV} and \ref{prop:genproj}) depend only on the space $Y$ and the dimension and therefore are not affected by changing the boundary conditions.

However, the proof of density estimates has to be slightly modified, since it is not possible anymore to consider, for a level-set $E$ of $u_{\alpha,w}$, the competitor $E \cup B_r$. Indeed, such a set $E$ would minimize
\[ F \mapsto \per(F) - \operatorname{sign}(s) \int_F v_{\alpha,w} \]
only among the subsets $F$ of $\Omega$. The strategy is then to relax the constraint $F \subset \Omega$ and introduce 
\[\kappa_{\alpha,w} := \operatorname{sign}(t)v_{\alpha,w} 1_{\Omega} + \kappa_\Omega 1_{\R^d \setminus \Omega} \]
where $\kappa_\Omega$ is a variational curvature for $\Omega.$ One can then show (see \cite[Lemma 9]{IglMerSch18}) that $E$ minimizes
\[ F \mapsto \per(F) - \int_F \kappa_{\alpha,w} \]
among $F \subset \R^d$ (without inclusion constraint). Then, provided \eqref{eq:DomDirich}, density estimates are obtained as before, since the functions $(\kappa_{\alpha,w})$ are also equiintegrable in $\R^d$. 

\paragraph{Bounded domain with Neumann boundary conditions.} In this case, $u \in L^q(\Omega)$ and one uses $\TVO{u}$, the total variation computed in $\Omega$ (the jumps at the boundary of $\Omega$ are \emph{not} taken into account). In this case, the proof of the existence result (Proposition \ref{prop:linearexistence}) needs to take into account the behaviour of the operator $A$ on constant functions, in exactly the same way as done in \cite[Prop.\ 2]{IglMerSch18}. Other than that, everything until Proposition \ref{prop:genproj} works similarly. Proposition \ref{prop:subgradTV} then implies that any level-set $E$ of a minimizer $u_{\alpha,w}$ minimizes among $F \subset \Omega$
$$ F\mapsto \per(F ; \Omega) - \int_F v_{\alpha,w},$$
where $\per(F ; \Omega) := \TVO{1_F}$ is the perimeter in $\Omega$, defined as in \eqref{eq:defTV}, but with test functions in $\mathcal C_0^\infty(\Omega;\R^d)$. For this relative perimeter, the standard isoperimetric inequality does not hold. To see this, consider for example that if $x \in \partial \Omega$ and $r \to 0^+$, then $\per(\Omega \setminus B(x,r);\Omega)\to 0$ while $|\Omega \setminus B(x,r)| \to |\Omega|$. Nevertheless, provided $\Omega$ is Lipschitz, the Sobolev inequality \cite[Remark 3.50]{AmbFusPal00} writes for $u\in \BV(\Omega)$,
\begin{equation}\label{eq:sobolevineq}\norm{u - \frac{1}{|\Omega|}\int_\Omega u}_{L^{\frac{d}{d-1}}(\Omega)} \ls C_\Omega \,\TV{u\, ;\, \Omega}.\end{equation} Taking $u = 1_F$ for any $F \subset \Omega$, we obtain \cite[Section 4.3]{IglMerSch18}
\[ C_\Omega \per(F ; \Omega) \gs \frac{|F|^{\frac{d}{d-1}} |\Omega \setminus F|^{\frac{d}{d-1}}}{|\Omega|^{\frac{d}{d-1}}}. \]
which can play the role of the isoperimetric inequality in the proof of density estimates. Note that now the parameter choice \eqref{eq:paramChoice} has to be made relatively to the constant $C_\Omega,$ that is $\eta < C_\Omega$ in \eqref{eq:paramChoice}.

\paragraph{Periodic boundary conditions.} It is also possible to treat the case of periodic boundary conditions, commonly used in image processing (see for example \cite[Sec.\ 3.3]{AubKor06}). A reasonable definition of periodic total variation is given in \cite{CesNov13}, which we now describe using their same notation. Let $Q=(0,1)^d$ be the $d$-dimensional cube. For $u \in \BV(Q)$ we denote by $u_{\partial Q} \in L^1(\partial Q)$ its trace on $\partial Q$ (which exists by \cite[Thm.\ 3.87]{AmbFusPal00}). Moreover, we define the part of the boundary
\[\partial_0 Q := \partial Q \cap \left\{ x = (x_1, \ldots \, x_d) \,\middle\vert\, \textstyle{\prod\limits_{i=1}^d} x_i = 0\right\},\]
where jumps should be accounted for in the variation. To accomplish this, one can use the boundary map $\zeta: \partial_0 Q \to \partial Q$ defined by
\[\zeta(x)=x+\sum_{i=1}^d \gamma_i(x) e_i, \text{ for }\gamma_i(x)=1 \text{ if }x_i = 0 \text{ and }\gamma_i(x)=0\text{ otherwise.}\]
With it, one can define the periodic total variation of $u \in \BV(Q)$ to be
\begin{equation}\label{eq:tvper}\TVper{u\, ;\, Q}:=\TV{u\, ;\, Q}+\int_{\partial_0 Q} \big|u_{\partial Q}(x) - u_{\partial Q}\big(\zeta(x)\big)\big|\dd \mathcal{H}^{d-1}(x),\end{equation}
and the corresponding perimeter of a set $E \subset Q$ as $\TVper{1_E\, ;\, Q}$. With these definitions, we can just notice 
\[\TVper{u\, ;\, Q}  \gs \TV{u\, ;\, Q},\]
so that the Sobolev inequality \eqref{eq:sobolevineq} remains valid with the same constant, allowing us to proceed as in the Neumann case for the proof of the density estimates. Existence is likewise treated as in the Neumann case, and all the other results hold with no modification. Notice in particular that even if the expression \eqref{eq:tvper} of $\mathrm{TV}_{\mathrm{per}}$ contains boundary terms, periodicity implies that the difficulties that make the Dirichlet case require extensions do not arise.

\section*{Acknowledgements}
We would like to thank Otmar Scherzer for encouragement to work on the interplay between the space dimension and convergence of level-sets.

\bibliographystyle{plain}
\bibliography{conv_general_source_cond}
\end{document}